\documentclass{amsart}
\usepackage{amsmath,amssymb}
\usepackage{graphicx,subfigure}

\usepackage{color}


\newtheorem{theorem}{Theorem}[section]
\newtheorem{proposition}[theorem]{Proposition}

\newtheorem{lemma}[theorem]{Lemma}

\theoremstyle{definition}
 \newtheorem{definition}[theorem]{Definition}

\newtheorem*{definition*}{Definition}

\theoremstyle{remark}
\newtheorem{remark}[theorem]{Remark}

\newcommand\ssubset{\subset\!\subset}
\newcommand\ssupset{\supset\!\supset}

\numberwithin{equation}{section}

\renewcommand\L{\langle l\rangle}

\newcommand{\al}{\alpha}
\newcommand{\be}{\beta}
\newcommand{\de}{\delta}
\newcommand{\ep}{\varepsilon}

\newcommand{\la}{\lambda}
\newcommand{\om}{\omega}
\newcommand{\si}{\sigma}
\newcommand{\te}{\theta}
\newcommand{\vp}{\varphi}

%
\newcommand{\De}{\Delta}
\newcommand{\Ga}{\Gamma}
\newcommand{\La}{\Lambda}
\newcommand{\Si}{\Sigma}
\newcommand{\Om}{\Omega}

%

%

%
\newcommand{\tu}{\widetilde{u}}

\newcommand{\tN}{\widetilde{N}}

%

%
\def\hv{\widehat v}
\newcommand{\hu}{\widehat{u}}
\def\hpsi{\widehat \psi}

%

\def\RR{\mathbb{R}}

\def\ZZ{\mathbb{Z}}

\def\TT{\mathbb{T}}

\renewcommand\SS{\mathbb{S}}
%


\newcommand{\cB}{{\mathcal B}}
\newcommand{\cC}{{\mathcal C}}

\newcommand{\cE}{{\mathcal E}}

\newcommand{\cI}{{\mathcal I}}

\newcommand{\cK}{{\mathcal K}}

\newcommand{\cM}{{\mathcal M}}

\newcommand{\cP}{{\mathcal P}}

\newcommand{\cS}{{\mathcal S}}
\newcommand{\cT}{{\mathcal T}}
\newcommand{\cX}{{\mathcal X}}

\newcommand{\cV}{{\mathcal V}}

%

%

%

\newcommand{\pd}{\partial}
\newcommand\minus\backslash
\newcommand{\id}{{\rm id}}

\newcommand\lan\langle
\newcommand\ran\rangle

%

\newcommand{\rank}{\operatorname{rank}}
\newcommand{\supp}{\operatorname{supp}}
\newcommand{\Span}{\operatorname{span}}

\DeclareMathOperator\Real{Re}

\DeclareMathOperator\dist{dist}

\renewcommand\leq\leqslant
\renewcommand\geq\geqslant
%
\newlength{\intwidth}

%

%
\addtolength{\parskip}{3pt}

\newcommand\loc{_{\mathrm{loc}}}

\DeclareMathOperator\Imag{Im}


\newcommand\co{^{\mathrm{c}}}
\def\sq{{\tau_-^{1/2}}}
\def\sqP{{\tau_+^{1/2}}}
\def\sqp{{\tau^{1/2}}}
\def\sqa{{|\tau|^{1/2}}}
\def\cKp{\cK^\perp}

\newcommand\step[1]{{\noindent{\em #1.}}}

\newcommand{\triplei}[1]{{\left\vert\kern-0.25ex\left\vert\kern-0.25ex\left\vert #1 
        \right\vert\kern-0.25ex\right\vert\kern-0.25ex\right\vert_{\tau,\infty}}}

\newcommand{\triple}[1]{{\left\vert\kern-0.25ex\left\vert\kern-0.25ex\left\vert #1 
    \right\vert\kern-0.25ex\right\vert\kern-0.25ex\right\vert_{\tau,2}}}

\begin{document}

\title[Approximation theorems for the Schr\"ondiger equation]{Approximation
  theorems for the Schr\"odinger equation and quantum vortex reconnection}

\author{Alberto Enciso}
\address{Instituto de Ciencias Matem\'aticas, Consejo Superior de
  Investigaciones Cient\'\i ficas, 28049 Madrid, Spain}
\email{aenciso@icmat.es}

\author{Daniel Peralta-Salas}
\address{Instituto de Ciencias Matem\'aticas, Consejo Superior de
 Investigaciones Cient\'\i ficas, 28049 Madrid, Spain}
\email{dperalta@icmat.es}

%
%
\begin{abstract}
  We prove the existence of smooth solutions to the Gross--Pitaevskii
  equation on~$\RR^3$ that feature arbitrarily complex quantum vortex
  reconnections. We can track the evolution of the vortices during the
  whole process. This permits to describe the reconnection events
  in detail and verify that this scenario exhibits the properties
  observed in experiments and numerics, such as the $t^{1/2}$ and
  change of parity laws. We are mostly interested in solutions tending
  to~1 at infinity, which have finite Ginzburg--Landau energy and
  physically correspond to the presence of a background chemical
  potential, but we also consider the cases of Schwartz initial data
  and of the Gross--Pitaevskii equation on the torus. An essential
  ingredient in the proofs is the development of novel global
  approximation theorems for the Schr\"odinger equation
  on~$\RR^n$. Specifically, we prove a qualitative approximation
  result that applies for solutions defined on very general spacetime
  sets and also a quantitative result for solutions on product
  sets in spacetime $D\times\RR$. This hinges on
  frequency-dependent estimates for the Helmholtz--Yukawa equation that
  are of independent interest.
\end{abstract}
\maketitle

\section{Introduction}

The Gross--Pitaevskii equation,
\begin{equation}\label{GP}
i\pd_tu+\De u+ (1-|u|^2) u=0\,,\qquad x\in\RR^3\,,
\end{equation}
models the evolution of a Bose--Einstein condensate (sometimes called
a superfluid). This is an important instance of a nonlinear
Schr\"odinger equation, which has the
peculiarity that, instead of looking for solutions that decay at
infinity, one is often interested in functions that tend to~1 as
$|x|\to\infty$. From a physical point of view, this is related to the
consideration of a chemical potential at infinity; mathematically,
one can relate the Gross--Pitaevskii equation with the Ginzburg--Landau functional
\[
\cE[u](t):= \int_{\RR^3}\bigg(\frac12|\nabla u(x,t)|^2 +
\frac14\big(1-|u(x,t)|^2\big)^2\bigg)\, dx\,,
\]
so one picks solutions that tend to~1 fast enough at infinity to have
finite Ginzburg--Landau energy. 

\subsection{Reconnection of quantum vortices for the Gross--Pitaevskii
  equation}

A hot topic in condensed matter physics is the study of the evolution
of quantum vortices~\cite{Aranson}. Recall that the {\em quantum vortices}\/ of the superfluid at
time~$t$ are defined as the connected components of the set
\[
Z_u(t):=\{x\in\RR^3: u(x,t)=0\}\,,
\]
so, as $u$ is complex valued, they are typically given by closed curves in
space. A central aspect is the analysis of the {\em vortex
  reconnection}\/, that is, the process through which two quantum
vortices cross, each of them breaking into two parts and exchanging part
of itself for part of the other (see Figure~\ref{Figure}, top). This may lead to a change of topology
of the quantum vortices. Among the extensive
literature on this topic, an outstanding contribution is the first
experimental measurement of vortex reconnection in superfluid
helium~\cite{Bewley}. In this paper it was observed that the distance
between the vortices behaves as $C|t-T|^{1/2}$ near the reconnection
time~$T$. Further numerical~\cite{Irvine,Villois} and theoretical~\cite{Nazarenko} studies have analyzed quantum vortex
reconnections (of very different global properties) in
detail, showing that the above separation rate
is in fact universal (this is nowadays called the {\em
  $t^{1/2}$~law}\/). Another intriguing  numerical
observation~\cite{Irvine} is that the parity of the number of quantum
vortices changes at reconnection time, meaning that an even number of
vortices reconnect into an odd number of quantum vortices and
viceversa.

\begin{figure}[t]
  \centering
  \subfigure{
\includegraphics[scale=1,angle=0]{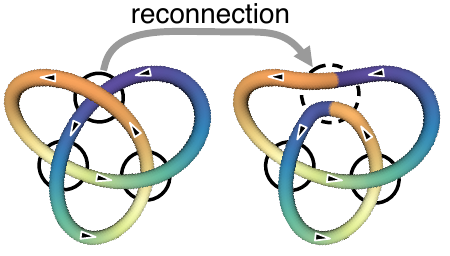}}\\[2mm]
  \subfigure{
\includegraphics[scale=0.6]{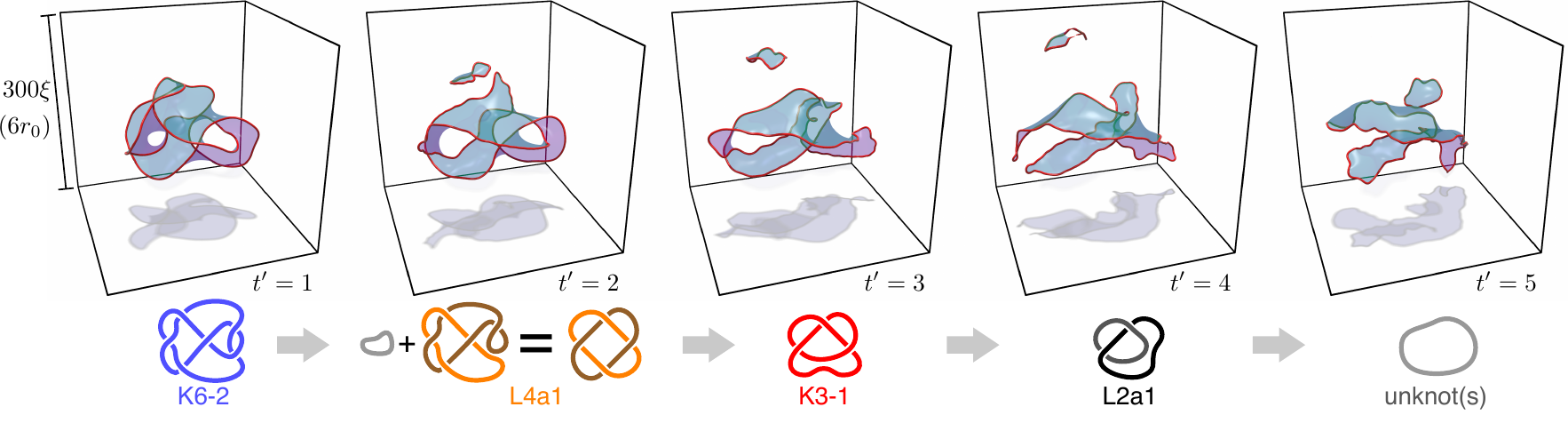}}
\caption{{\em Top:} Visual description of the reconnection
  phenomenon. Here a quantum vortex in the shape of a trefoil knot
  reconnects into two linked unknots. {\em Bottom:} Numerical
  simulation of a solution to the Gross--Pitaevskii equation that
  exhibits a cascade of reconnections that transform a K6-2 knot into
  an unknot. Courtesy of  Irvine, Kauffman and Kleckner~\cite{Irvine}.} \label{Figure}
\end{figure}

As an aside, let us recall that the Gross--Pitaevskii equation, and
other nonlinear Schr\"odinger equations, are somehow connected with
the 3D Euler equation~\cite{Banica2, Banica}. This provides some heuristic
relation between the quantum vortices of a Bose--Einstein condensate
and vortex filaments in an incompressible
fluid~\cite{Jerrard1,Jerrard3, Kenig,Lannes}. However, in this paper we will not
pursue this line of ideas.

Our motivation for this paper is to prove the reconnection of quantum
vortices in smooth solutions to the Gross--Pitaevskii equation. More
precisely, in view of the experimental and numerical evidence, there
are two issues that we want to analyze in this context. Firstly, we
aim to show that, just as in the physics literature, in the
reconnections we construct the distance between vortices near the
reconnection time obeys the $t^{1/2}$~law. Secondly, we aim to track
the vortex reconnection process at all times, both locally and
globally, even if the topology of the initial and final vortices are
completely different. This is motivated by the numerical
evidence~\cite{Irvine} that, when looked at from a global point of
view, vortex reconnection can occur so that the topology (i.e., the knot
and link type) of the vortices change wildly.

Our main result shows that, given any finite initial and
final configurations of quantum vortices (which do not need to be
topologically equivalent) and any conceivable way of reconnecting them
(that is, of transforming one into the other), there is a smooth
initial datum~$u_0$ whose associated solution realizes this specific
vortex reconnection scenario. 

To make this statement precise, one can describe the initial and final
vortex configurations by links~$\Ga_0, \Ga_1\subset\RR^3$. By a {\em link}\/ we
denote a finite union of closed pairwise disjoint curves without
self-intersections, contained in~$\RR^3$, and of class~$C^\infty$.
Notice that $\Ga_0$ and~$\Ga_1$ do not necessarily have the same
number of connected components, and that these components need not be
homeomorphic. To describe a way of transforming the link~$\Ga_0$
into~$\Ga_1$ in time~$T$, we introduce the notion of a {\em
  pseudo-Seifert surface}\/. By this we will mean a smooth,
two-dimensional, bounded, orientable surface $\Si\subset\RR^4$ whose
boundary is
  \[
\pd\Si=(\Ga_0\times\{0\})\cup (\Ga_1\times\{T\})\,.
\]
As an additional technical assumption, we will assume that the surface
is in generic position, meaning that the fourth (``time'') coordinate
of~$\RR^4$ is a Morse function on~$\Si$ that does not have any
critical points on the boundary~$\pd\Si$. This kind of pseudo-Seifert
surfaces can be used to describe any reconnection cascade like the
ones numerically studied in~\cite{Irvine} (see Figure~\ref{Figure},
    bottom for an illustrative example). As a matter of fact, we show in Section~\ref{S.observed}
that pseudo-Seifert surfaces provide a universal mechanism of describing the reconnection
process for the initial and final links~$\Ga_0,\Ga_1$.

The theorem can then be stated as follows. To state this result, let us begin by
introducing some notation. Given a spacetime subset
$\Om\subset\RR^{n+1}$ (here, $n=3$), let us denote
by 
\begin{equation}\label{Omt}
\Om_t:=\Om\cap (\RR^n\times\{t\})
\end{equation}
its intersection with the time~$t$ slice. Furthermore, we use the notation
\begin{equation}\label{dilation}
\La_\eta(x):= \eta x
\end{equation}
for the dilation on~$\RR^3$ with ratio~$\eta>0$.

\begin{theorem}\label{T.GP}
  Consider two links $\Ga_0,\Ga_1\subset\RR^3$ and a pseudo-Seifert
  surface~$\Si\subset\RR^4$
  connecting~$\Ga_0$ and~$\Ga_1$ in time~$T>0$. Then, there is a global smooth solution~$u(x,t)$ to the Gross--Pitaevskii
  equation on~$\RR^{3}$, tending to~$1$ at infinity, which 
  realizes the vortex reconnection pattern
  described by~$\Si$ up to a diffeomorphism. Specifically, for any $\ep>0$ and
  any~$k>0$, one has:
  \begin{enumerate}
    \item The function~$u$ tends to~$1$ exponentially fast at infinity.
      More precisely, $1-u\in C^\infty\loc(\RR, \cS(\RR^3))$, where
      $\cS(\RR^3)$ is the Schwartz space.
    \item One can track the evolution of the quantum vortices during
      the prescribed reconnection process at all times. More
      precisely,  there is some $\eta>0$ and
  a diffeomorphism $\Psi$ of~$\RR^4$ with
  $\|\Psi-\id\|_{C^k(\RR^4)}<\ep$ such that $\La_{\eta}[\Psi(\Si)_t]$ is a union
  of connected components of $Z_u(\eta^2 t)$ for all~$t\in [0,T]$.
  
\item In particular, there is a smooth one-parameter family of
  diffeomorphisms $\{\Phi^t\}_{t\in\RR}$ of~$\RR^3$ with
  $\|\Phi^t-\id\|_{C^k(\RR^3)}<\ep$ and a finite
  union of closed
  intervals~$\cI\subset (0,T)$ of total length less than~$\ep$ such that $\La_\eta[\Phi^t(\Si_t)]$
  is a union of connected components of the set $Z_u(\eta^2 t)$ for all $t\in
  [0,T]\backslash\cI$.
  
\item The separation distance obeys the $t^{1/2}$ law and the parity
  of the number of quantum vortices of~$\Phi^t(\Si_t)$ changes at each
  reconnection time, in the sense
  described above.
  \end{enumerate}
  
\end{theorem}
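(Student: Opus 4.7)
The plan is to combine the paper's global approximation theorems for the free Schr\"odinger equation with a Morse-theoretic local-model construction and a nonlinear perturbation argument at small spatial scale. The rescaling built into the theorem is the mechanism that tames the Gross--Pitaevskii nonlinearity: substituting $u(x,t) = 1 + w(x/\eta,t/\eta^2)$ turns the equation into
\[
i\pd_s w + \De_y w = \eta^2(w+\bar w+|w|^2)(1+w),
\]
an $O(\eta^2)$ perturbation of the free Schr\"odinger equation, and the vortex set of $u$ at time $\eta^2 s$ is $\eta$ times the level set $\{1+w(\cdot,s)=0\}$. It therefore suffices to produce a $w$ with Schwartz spatial decay such that $\{1+w=0\}$ contains a diffeomorphic copy of $\Si$ with the correct local structure at reconnections.

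The first step is a local model. Using that $t|_\Si$ is Morse with no critical points on $\pd\Si$, list the finitely many critical points $(x_k,t_k)$; on a tubular neighborhood $\cN$ of $\Si$ in $\RR^4$, build a local Schr\"odinger solution $w_0:\cN\to\CC$ (as a Taylor-type polynomial in Morse coordinates at each critical point, patched with a regular transverse model on the rest of $\Si$) whose level set $\{1+w_0=0\}$ is exactly $\Si$. Near a saddle, the chosen local solution forces $\Si_s$ to be a pair of hyperbola branches at separation of order $|s-s_k|^{1/2}$, which is the $t^{1/2}$ law; near a birth or death critical point a small circle appears or disappears, and the Euler characteristic of $\Si_s$ jumps by $\pm 1$, so the parity of the number of components changes at each reconnection time.

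The second step is global approximation. Apply the qualitative approximation theorem for the free Schr\"odinger equation on general spacetime sets to obtain a genuine solution $\varphi \in C^\infty\loc(\RR,\cS(\RR^3))$ with $\|\varphi-w_0\|_{C^k(\cN)}<\de$ for any prescribed $\de>0$ and $k$. The implicit function theorem on the regular part of $\Si$ and the stability of Morse singularities under $C^2$-small perturbations imply that $\{1+\varphi=0\}\cap\cN$ is a diffeomorphic copy $\Psi_1(\Si)$ with $\Psi_1$ $C^k$-close to the identity. To promote this to a Gross--Pitaevskii solution, seek $w=\varphi+r$ solving the full nonlinear equation: since the forcing term is $O(\eta^2)$ and $\varphi$ has Schwartz spatial decay, a Banach fixed-point argument in a Schwartz-valued space on $[-1,T+1]$ yields $r$ with $\|r\|_{C^k}=O(\eta^2)$ for $\eta$ sufficiently small. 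A further perturbation-stability step upgrades $\Psi_1$ to the $\Psi$ of the theorem; slicing at regular values of $t|_{\Psi(\Si)}$ produces $\Phi^t$ outside a small union of closed intervals $\cI$ around the critical times, and the Schwartz decay of $u-1$ is inherited from that of $\varphi$ and $r$.

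The hardest part is to supply a linear Schr\"odinger solution that is simultaneously $C^k$-close to $w_0$ on a spacetime set of possibly intricate topology \emph{and} Schwartz in space uniformly on $[-1,T+1]$; this is exactly what the paper's qualitative approximation theorem on general spacetime sets is designed to deliver, and is where the frequency-dependent Helmholtz--Yukawa estimates enter, controlling low frequencies (which drive spatial decay) and high frequencies (which drive $C^k$ approximation on $\cN$) separately. A secondary bookkeeping issue is to coordinate the smallness of $\de$, $\eta$, and the widths of the excluded intervals $\cI$ so that both the transverse regular part and the Morse singular part of the zero set survive the approximation step and the nonlinear correction; this is routine but governs the parameter dependence asserted in the four clauses of the theorem.
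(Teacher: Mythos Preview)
Your overall architecture (local model $\to$ global Schr\"odinger approximation $\to$ small-parameter nonlinear perturbation via rescaling) matches the paper's, and your identification of the rescaling as the mechanism that puts the Gross--Pitaevskii equation in a linear regime is exactly right. The genuine gap is your first step. You assert the existence of a \emph{local Schr\"odinger solution} $w_0$ on a tubular neighbourhood $\cN$ of $\Si$ with $\{1+w_0=0\}=\Si$, obtained by ``patching'' Taylor-type polynomials near the Morse critical points with ``regular transverse models'' along the rest of $\Si$. This does not work: patched local pieces of a linear PDE are not solutions on the union unless they agree on overlaps, and there is no mechanism in your description forcing that compatibility. Moreover, away from the critical points, writing down even a \emph{single} local Schr\"odinger solution whose zero set is a prescribed piece of $\Si$ and whose real and imaginary parts vanish transversally there is already a nontrivial Cauchy problem; a generic smooth $\Si$ is characteristic for $i\pd_t+\De$ at many points (whenever the time direction lies in its normal plane this is exactly what happens at the Morse points), so one cannot simply impose data on $\Si$ itself. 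Since the approximation theorem you want to invoke requires its input to satisfy the Schr\"odinger equation on $\cN$, this step cannot be skipped.

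The paper's resolution is a specific geometric construction that you have not anticipated. One first extends the two-dimensional surface $\Si$ to a three-dimensional real-analytic hypersurface $S\subset\RR^4$ by pushing $\Si$ along a carefully chosen normal vector field $a(x,t)$; the triviality of the normal bundle of an orientable surface in $\RR^4$ is used here. The point of the choice of $a$ is to ensure that $S$ is \emph{noncharacteristic} for $i\pd_t+\De$, i.e.\ that its unit normal is nowhere parallel to $e_4$; this is arranged even at the finitely many Morse points of $t|_\Si$. One then solves the analytic Cauchy problem on $S$ by Cauchy--Kowalewskaya with data $v|_S=\phi$ (a real analytic function with $\phi^{-1}(1)=\Si$ and $\nabla_S\phi$ transverse to $\Si$) and normal derivative $N\cdot\nabla_{x,t}v=i$. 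This produces in one stroke an honest analytic Schr\"odinger solution $v$ on a neighbourhood of $S$ with $v^{-1}(1)=\Si$ and with $\nabla v_1,\nabla v_2$ linearly independent along $\Si$, which is exactly the transversality needed for Thom's isotopy theorem to make the level set robust. Once this local solution is in hand, the rest of your outline (global approximation by $e^{it\De}w_0$ with $w_0\in\cS$, then Duhamel/fixed-point for the rescaled nonlinear equation) goes through essentially as you describe.
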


Before presenting the main ideas of the proof of this theorem, it is
worth comparing it with our previous result with Luc\`a on vortex reconnection
for the 3D Navier--Stokes equation~\cite{NS}. From the point of view
of what we prove, the main difference is that the Navier--Stokes
result shows that one can take a finite number of ``observation
times'' $T_0<T_1<\dots< T_N$ such that the vortex structures present
at the fluid at time~$T_k$ are not topologically equivalent to those
at time $T_{k\pm1}$, which shows in an indirect way that at least one
reconnection event must have taken place. In constrast, in the above
theorem one can control the evolution of the quantum vortices during
the whole reconnection process, and in particular one can describe in
detail how the reconnection occurs. This is key to verify that these
reconnection scenarios possess the properties that are observed in the
physics literature, such as the aforementioned $t^{1/2}$ and change of
parity laws. We discuss in Section~\ref{S.observed} other relevant
physical properties that are also featured. 

From the point of view of the strategy of the proof, the result about the
Navier--Stokes equation involves two ideas. Firstly, one comes up with
a (rather sophisticated) construction of a
family of Beltrami fields (that is, eigenfunctions of the curl
operator) of arbitrarily high frequency that present vortex lines of
``robustly distinct'' topologies. This step is time-independent. The time-dependent
part of the argument hinges on the idea of transition to lower
frequencies: acting in the linear regime, the diffusive part of
the equation guarantees that a high-frequency Beltrami field can
represent the leading part of the solution at time~$T_0$ while a Beltrami
field of a still high but much lower frequency may dominate at a fixed
later time~$T_1$.

It is obvious that this heat-equation-type argument will not
work for the Gross--Pitaevskii equation even in the linear regime,
which is controlled by the Schr\"odinger equation. Our strategy is
completely different. Still, from an analytic point of view, an important simplification is that
the rescalings with parameter~$\eta$ that appear in the statement
enable us to construct solutions that tend to~1 as $|x|\to\infty$ for
which, for practical pursposes, the Gross--Pitaevskii equation
operates in a linear regime. This paves the way to using, in an essential
part of the argument, a remarkable global approximation property of
the linear Schr\"odinger equation
\begin{equation}\label{Sch}
i\pd_t v+\De v=0\,,
\end{equation}
with $x\in\RR^n$ and $n\geq2$, which to the best of our knowledge has never been observed
before.

\subsection{Global approximation theorems for the Schr\"odinger equation}

Roughly speaking, this property ensures that a function that
satisfies the Schr\"odinger equation on a spacetime set with
certain mild topological properties can be approximated, in a suitable
norm, by a global solution of the form $e^{it\De} u_0$, with $u_0$~a
Schwartz function.

All the spacetime sets we take in
this paper are assumed to have a smooth boundary unless otherwise stated. Furthermore, we will use the notation
\begin{equation}\label{L2Hs}
\|v\|_{L^2 H^s(\Om)}^2:=\int_{-\infty}^\infty \| v(\cdot,t)\|_{H^s(\Om_t)}^2\,
dt<\infty\,.
\end{equation}
A non-quantitative global approximation theorem can then be stated as follows, where
the relation $\Om'\subset\!\subset\Om$ means that the closure
of the set~$\Om'$ is contained in~$\Om$.

\begin{theorem}\label{T.GAT}
Let $v$ satisfy the Schr\"odinger equation~\eqref{Sch}
in a bounded open set with smooth boundary $\Om \subset\RR^{n+1}$ and take a smaller set
$\Om'\subset\!\subset\Om$. Suppose that $v\in L^2H^s(\Om)$ for some $s\in\RR$
and that the set $\RR^n\backslash \Om_t$ is connected for all $t\in\RR$. 
Then, for any~$\ep>0$, there is a Schwartz function $w_0\in \cS(\RR^n)$ such that $w:= e^{it\De} w_0$ approximates~$v$ as
\[
\|v-w\|_{L^2 H^s(\Om')}\leq \ep\,.
\]
\end{theorem}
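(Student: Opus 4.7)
The strategy is a Hahn--Banach/Runge--Lax--Malgrange duality argument, specialized to the Schr\"odinger operator. Let $X \subset L^2H^s(\Omega')$ denote the closure of $\{e^{it\Delta}w_0|_{\Omega'} : w_0 \in \cS(\RR^n)\}$; the goal is to show $v|_{\Omega'} \in X$. Suppose otherwise; then by Hahn--Banach there is a continuous linear functional $\ell$ on $L^2H^s(\Omega')$ that vanishes on $X$ but satisfies $\ell(v|_{\Omega'})\neq 0$. Represent $\ell$ by a distribution $\phi$, in an appropriate dual Sobolev space, compactly supported in $\overline{\Omega'}$. The aim is to derive a contradiction by showing $\ell(v)=0$, using only that $v$ solves the Schr\"odinger equation on~$\Omega$.

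The orthogonality condition $\ell(e^{it\Delta}w_0)=0$ for every Schwartz $w_0$ becomes, via the identity $\widehat{e^{it\Delta}w_0}(\xi,\tau)=2\pi\widehat{w_0}(\xi)\,\delta(\tau+|\xi|^2)$,
\[
\widehat{\phi}(\xi,-|\xi|^2)=0 \qquad \text{for every } \xi\in\RR^n,
\]
so the entire function $\widehat{\phi}$ (entire by Paley--Wiener--Schwartz) vanishes on the Schr\"odinger characteristic paraboloid $\tau=-|\xi|^2$. Because $\tau+|\xi|^2$ is an irreducible polynomial, division produces an entire $g$ with $\widehat{\phi}=-(\tau+|\xi|^2)\,g$, and $U:=\check g$ is then a compactly supported spacetime distribution satisfying $(i\partial_t+\Delta)U=\phi$. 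If one had the additional information $\supp U\subset\overline\Omega$, an integration by parts would close the argument:
\[
\ell(v)=\int (i\partial_t+\Delta)U\cdot\bar v\,dx\,dt = \int U\cdot\overline{(i\partial_t+\Delta)v}\,dx\,dt + (\text{boundary}) = 0,
\]
since $v$ is a Schr\"odinger solution on $\Omega$ and the boundary terms vanish.

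The heart of the proof is therefore a \emph{support-reduction} step: Paley--Wiener only guarantees that $\supp U$ lies in the convex hull of $\overline{\Omega'}$, which in general extends outside $\Omega$. My plan is to modify $U$ by a global Schr\"odinger solution $W$ so that $U-W$ is supported in $\overline\Omega$; equivalently, to extend the local Schr\"odinger solution $U|_{\RR^{n+1}\setminus\overline\Omega}$ (which satisfies the homogeneous equation there because $\phi$ is supported in $\overline{\Omega'}\ssubset\Omega$) to a global solution on $\RR^{n+1}$. The connectivity of each time slice $\RR^n\setminus\Omega_t$ plays the role of the classical Runge--Lax--Malgrange topological hypothesis for elliptic equations. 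The bridge to ellipticity is a Fourier transform in the time variable alone, which converts $(i\partial_t+\Delta)U=\phi$ into the family of elliptic Helmholtz--Yukawa equations
\[
(\Delta-\tau)\widetilde{U}(\cdot,\tau)=\widetilde{\phi}(\cdot,\tau), \qquad \tau\in\RR,
\]
at each fixed temporal frequency. For every $\tau$ one can invoke a classical elliptic Runge argument to adjust $\widetilde{U}(\cdot,\tau)$ by a global solution of $(\Delta-\tau)\widetilde{W}=0$ so that its spatial support respects the geometry, and the frequency-dependent Helmholtz--Yukawa estimates advertised in the abstract are then used to make these corrections uniform in $\tau$, so that inverting the time-Fourier transform yields a coherent spacetime distribution $U'$ with $(i\partial_t+\Delta)U'=\phi$ and $\supp U'\subset\overline\Omega$.

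The hardest step is unquestionably this support reduction: while Runge for a fixed-frequency elliptic equation is a standard tool, assembling the family $\{\widetilde{U}(\cdot,\tau)\}_\tau$ into a single compactly supported spacetime distribution whose support fits inside $\overline\Omega$ demands precisely the uniform-in-$\tau$ Helmholtz--Yukawa estimates that the paper highlights as being of independent interest. Once this $U'$ is in hand, the integration-by-parts computation displayed above yields $\ell(v)=0$, contradicting the choice of $\ell$ and completing the proof.
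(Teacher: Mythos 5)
Your proposal takes a genuinely different route: a one-step Lax--Malgrange duality against the class $\{e^{it\De}w_0 : w_0\in\cS(\RR^n)\}$, using the H\"ormander division theorem and Paley--Wiener--Schwartz to produce a compactly supported distribution $U$ with $(i\pd_t+\De)U=\phi$, and then attempting to reduce $\supp U$ to $\BOm$. The framework is natural, and you correctly identify the three ingredients (Hahn--Banach, the topological hypothesis via unique continuation, and the frequency-dependent Helmholtz--Yukawa estimates), but the support-reduction step — which you yourself flag as the crux — is not handled correctly, and the Helmholtz--Yukawa tool is deployed in the wrong place.

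Two concrete problems. First, you propose to ``extend the local Schr\"odinger solution $U|_{\RR^{n+1}\setminus\BOm}$ to a global solution,'' equivalently to modify $U$ by a global homogeneous solution $W$ so that $U-W$ is supported in $\BOm$. But exact extension of a local solution is impossible (and, were it possible, would make the whole theorem trivial); the classical Lax--Malgrange support reduction for elliptic operators works by showing that $U$ \emph{itself} already vanishes outside $\BOm$, via unique continuation from a neighborhood of infinity combined with the topological hypothesis — not by modifying $U$. Second, the per-frequency Helmholtz--Yukawa Runge argument is the wrong instrument for this: Runge approximation does not give exact support modification, and, more fundamentally, the time Fourier transform mixes all times, so a general bounded spacetime domain $\Om$ (whose slices $\Om_t$ vary with $t$) has no fixed spatial cross-section in terms of which the supports of the frequency pieces $\widetilde U(\cdot,\tau)$ could be controlled or reassembled. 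Fourier transform in $t$ only plays well with cylinder domains $D\times\RR$, which is exactly where the paper uses it.

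The paper's actual proof proceeds in two steps that re-allocate your tools. Step one (Lemma~\ref{L.nonquant}) is a Hahn--Banach argument that approximates $v$ by $\cT f$ with $f$ smooth, compactly supported in a remote spacetime cylinder $S$; it uses the Duhamel operators $\cT$, $\cT^*$ (so there is no Paley--Wiener division and $\cT^*\phi$ is \emph{not} compactly supported), and the topological hypothesis enters through the Schr\"odinger unique continuation property (Theorem~\ref{T.UCP}), which propagates zeros horizontally across connected spatial slices at each fixed time; all one needs is that $\cT^*\phi$ vanishes on $\supp f$, not a support reduction of a factor to $\BOm$. Step two (Lemma~\ref{L.higherreg}, built on Theorem~\ref{T.qGAT}) upgrades $\cT f$ — whose time Fourier transform decays rapidly because $f$ has compact time support — to a global solution $e^{it\De}w_0$ with Schwartz $w_0$; this is where the Helmholtz--Yukawa frequency decomposition and its uniform-in-$\tau$ estimates actually enter, namely to control (and then repair, via a Gaussian cutoff) the exponential growth at spatial infinity of the solutions produced by a Runge-type argument at each temporal frequency. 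If you want to rescue your one-step scheme, replace the Helmholtz--Yukawa modification of $U$ by a direct application of Theorem~\ref{T.UCP} to a mollification of $U$, using the compact support and the connectedness of $\RR^n\setminus\Om_t$; but you would then still have to confront the delicate pairing of $U$ against $v$ near $\pd\Om$, which the paper's mollification-plus-remote-source device avoids.
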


\begin{remark}\label{R.reg}
As will be clear from the proof, the choice of the norm $L^2H^s(\Om)$ in
Theorem~\ref{T.GAT} is completely inessential. Instead, we could have taken
$H^s(\Om)$ for any real~$s$ or the H\"older norm~$C^s(\Om)$ for any
$s\geq0$, for instance.
\end{remark}

If the set~$\Om$ where the ``local solution''~$v$ is defined is
of the form
\begin{equation}\label{Omcyl}
  \Om=D\times\RR\,,
\end{equation}
where $D\subset\RR^n$ is a bounded open set with smooth boundary, the above
qualitative approximation result can be promoted to a quantitative statement. The local solution~$v$ must
additionally satisfy a certain decay
condition for large times (which is obviously satisfied in nontrivial examples). In order to state the quantitative result in a convenient
form, here and in what follows we denote by
\[
\hv(x,\tau):= \frac1{2\pi}\int_{-\infty}^\infty e^{-i\tau t}\, v(x,t)\, dt
\]
the time Fourier transform of a function (or tempered distribution)
$v(x,t)$ defined on~$\Om$. Also, we use the Japanese bracket
\[
\langle x\rangle:=(1+|x|^2)^{1/2}\,.
\]

\begin{theorem}\label{T.qGAT}
  Let $\Om:= D\times \RR$, where $D\subset\RR^n$ is a bounded open set with
  smooth boundary whose complement $\RR^n\backslash D$ is connected. Suppose that
  $v\in L^2(\Om)$ satisfies the Schr\"odinger equation~\eqref{Sch}
  in~$\Om$ and its time Fourier transform is bounded as
  \[
\int_{|\tau|>\tau_0}\int_D |\hv(x,\tau)|^2\, dx\, d\tau\leq
M^2\langle\tau_0\rangle^{-\si}
  \]
  for some $\si>0$ and all $\tau_0\geq0$. Then, for each $\ep\in
  (0,1)$ and any $T>0$:
  \begin{enumerate}
  \item There is a Schwartz initial datum $w_0\in \cS(\RR^n)$ such that the
    solution to the Schr\"odinger equation $w:= e^{it\De} w_0$
    approximates~$v$ on~$\Om_T:=D\times(-T,T)$ as
    \[
\|v-w\|_{L^2(\Om_T)}\leq \ep M
    \]
    and $w_0$ is bounded as
    \[
\|w_0\|_{L^2(\RR^n)}\leq e^{e^{e^{e^{C\ep^{-1/\si}}}}} M
\,. 
\]
    
    \item Given any smaller set $D'\ssubset D$, one can take an
      initial datum $w_0\in \cS(\RR^n)$ such that $w:= e^{it\De} w_0$
    approximates~$v$ on~$\Om'_T:=D'\times (-T,T)$ as
    \[
\|v-w\|_{L^2(\Om'_T)}\leq \ep M
    \]
    and $w_0$ satisfies the sharper bound
    \[
\|w_0\|_{L^2(\RR^n)}\leq e^{e^{C\ep^{-1/\si}}} M \,.
    \]
  \end{enumerate}
  Here the constant~$C$ depends on $T$ and on the geometry of the domains.
\end{theorem}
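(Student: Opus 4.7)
The plan is to pass to the time Fourier transform and convert the Schr\"odinger approximation problem into a continuous family of Helmholtz--Yukawa approximation problems parametrized by the frequency~$\tau$. Writing $w=e^{it\Delta}w_0$ via the spatial Fourier transform $\tilde w_0$ of the initial datum, $w$ decomposes as
\[
w(x,t)=c_n\int_0^\infty e^{-itk^2}\,k^{n-1}\,u_k(x)\,dk,\qquad u_k(x)=\int_{S^{n-1}} e^{ikx\cdot\omega}\,\tilde w_0(k\omega)\,d\sigma(\omega),
\]
where each $u_k$ is a Herglotz wave solving $(\Delta+k^2)u_k=0$ on~$\RR^n$. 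On the other hand, $v\in L^2(\Om)$ with $i\pd_t v+\Delta v=0$ in~$\Om$ gives, by Fourier in~$t$, the family $(\Delta-\tau)\hv(\cdot,\tau)=0$ in~$D$. Plancherel in~$t$ then translates $\|v-w\|_{L^2(\Om_T)}\lesssim \ep M$ into a continuous family of stationary approximation problems: for each $\tau$, approximate $\hv(\cdot,\tau)|_D$ in $L^2(D)$ (or $L^2(D')$ in part~(ii)) by a global solution of $(\Delta-\tau)U=0$ with explicit control on the Herglotz density.

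First I would truncate the frequency variable at $|\tau|\leq\tau_0$, which by the decay hypothesis costs at most $M\langle\tau_0\rangle^{-\si/2}$ in the $L^2$ error and leaves $\tau_0$ as a free parameter to be chosen at the end. For $\tau<0$, $\hv(\cdot,\tau)|_D$ can then be approximated by the Herglotz wave $u_{\sqrt{|\tau|}}$ in $L^2(D)$ or $L^2(D')$ using a Runge-type theorem for the Helmholtz equation, exploiting that $\RR^n\setminus D$ is connected; the crucial point is to do this quantitatively, with an explicit bound on the $L^2(S^{n-1})$ norm of the density in terms of the accuracy~$\delta$ and the frequency $\sqrt{|\tau|}$. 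For $\tau>0$, where $(\Delta-\tau)U=0$ has no nontrivial bounded global solution, I would exploit the fact that the approximation is only required on the bounded time interval $(-T,T)$: the time cutoff $\chi_{(-T,T)}$ acts as a convolution in~$\tau$ on the Fourier side, allowing negative-frequency Herglotz waves to reproduce the positive-$\tau$ content of $\hv$ via the Paley--Wiener density of $\{e^{-ik^2t}\}_{k\geq 0}$ in $L^2(-T,T)$, at a controlled but frequency-dependent cost in the density norm.

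Having the densities $g_k$ in hand, I would assemble them into a single Schwartz initial datum by setting $\tilde w_0(k\omega)$, up to a smooth cutoff at $k\lesssim\sqrt{\tau_0}$, equal to the density produced above, and bound $\|w_0\|_{L^2(\RR^n)}$ by integrating the density estimates against $k^{n-1}\,dk\,d\sigma(\omega)$. Optimizing $\tau_0$ by balancing the tail error $M\langle\tau_0\rangle^{-\si/2}$ against the in-range approximation error (both required to be $\lesssim\ep M$), and tracking the explicit dependence of the Runge constants on $\tau_0$ and on~$\delta$, then produces the stated iterated-exponential bounds on $\|w_0\|_{L^2}$.

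The decisive step is the quantitative Runge approximation for the Helmholtz--Yukawa equation with explicit and essentially sharp dependence on the frequency $\sqrt{|\tau|}$ and on~$\delta$: these are the ``frequency-dependent estimates for the Helmholtz--Yukawa equation'' advertised in the abstract. In part~(ii), the interior gap $D'\ssubset D$ allows one to exploit elliptic interior regularity, which is why a double-exponential bound suffices. In part~(i), the approximation must take place in $L^2(D)$ up to the full boundary $\pd D$, a substantially harder regime that requires much more delicate frequency-dependent estimates and accounts for the two additional exponentials in the bound on $\|w_0\|_{L^2}$. An auxiliary technical difficulty is the Yukawa (positive-$\tau$) contribution, whose Paley--Wiener approximation cost on $(-T,T)$ inevitably grows with $\tau$ and must be compatible with the Runge blow-up to close the optimization.
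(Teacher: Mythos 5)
Your opening moves coincide exactly with the paper's: pass to the time Fourier transform, observe that $\hv(\cdot,\tau)$ solves the Helmholtz--Yukawa equation $(\Delta-\tau)\hv=0$ on $D$, truncate at $|\tau|\leq\tau_0$ paying a tail cost $M\langle\tau_0\rangle^{-\sigma/2}$, and lean on a quantitative, frequency-dependent Runge theorem for the stationary equation (this is precisely the role of Theorem~\ref{T.Helmholtz}); you also correctly identify the boundary/interior dichotomy as the source of the extra two exponentials in part~(i). The genuine gap is in your treatment of the positive-$\tau$ contribution, which you resolve by asserting a Paley--Wiener-type density argument without supplying a mechanism. The difficulty is not temporal density alone: for a fixed $\tau>0$, the function $\hv(\cdot,\tau)|_D$ solves $(\Delta-\tau)\hv=0$, and a Herglotz wave $u_k$ solves $(\Delta+k^2)u_k=0$ — eigenfunctions of $\Delta$ with eigenvalues of opposite signs — so there is no $k$ for which $u_k$ approximates $\hv(\cdot,\tau)$ on $D$. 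The approximation must therefore be a genuinely joint $(x,t)$ statement, and the density of $\{e^{-ik^2t}\}_{k\geq 0}$ in $L^2(-T,T)$ does not decouple the temporal approximation from the spatial PDE constraint. Your proposal to ``set $\tilde w_0(k\omega)$ equal to the density produced above'' also leaves the measurability and smoothness of the density in $k$ unaddressed, which is needed for $w_0\in\cS(\RR^n)$.

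The paper closes this gap by a different device. For each $\tau$ (positive or negative), Theorem~\ref{T.Helmholtz} produces a global solution $\hpsi(\cdot,\tau)$ of $(\Delta-\tau)\hpsi=0$ on $\RR^n$ of the form of a finite spherical-harmonic sum with radial parts $r^{1-n/2}I_{l+n/2-1}(r\sqrt\tau)$; for $\tau>0$ these are exponentially growing Yukawa solutions, not Herglotz waves, and their size is controlled by a $\tau$-weighted seminorm. Superposing over $|\tau|<\tau_\ep$ gives an intermediate global Schr\"odinger solution $v_1$ that is close to $v$ on $\Om_T$ but whose time-zero slice grows like $e^{C\ep^{-1/\sigma}|x|}$. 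The key step you are missing is the Gaussian damping: take $u_\delta(x)=v_1(x,0)e^{-\delta|x|^2}\in\cS(\RR^n)$, and use the closed-form Bessel integral \cite[6.633.4]{GR} together with $J_\nu(iz)=i^\nu I_\nu(z)$ to evaluate $e^{it\Delta}u_\delta$ explicitly and show it converges to $v_1$ as $\delta\searrow 0$ uniformly on compacts, with an effective rate. The Gaussian multiplication in $x$ is the precise realization of the ``spreading in $\tau$'' you gesture at, and it is the single analytic input whose quantitative version (through the Bessel identities) produces the iterated exponential bounds; without it, your sketch has no route from exponentially growing Yukawa extensions to a Schwartz $w_0$.
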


Let us recall that global approximation theorems are classical in the
case of elliptic and hypoelliptic operators~\cite{Browder,Hormander,
  Lax,Malgrange}, starting with the work of Runge in complex analysis.
These results have been recently extended to the related setting of
parabolic operators~\cite{Duke}. The case of dispersive equations,
however, is substantially different and presents new key technical
subtleties. The way we solve these difficulties in the above
non-quantitative approximation result hinges on a careful analysis of
integrals defined by Bessel functions with real and complex arguments.

A quantitative approximation theorem for elliptic equations was first
established by Salo and R\"uland in~\cite{Ruland}. Specifically, they
show that a function~$v$ satisfying a nice linear elliptic
equation (e.g., the Laplace equation) on a smooth bounded domain~$\Om$
can be approximated in~$L^2$ by solutions to the elliptic
equation on a larger domain $\Om_1$ whose $L^2$~norm is controlled in
terms of the $H^1(\Om)$~norm of~$v$ and the geometry of the domains. The
gist of the proof is a stability argument, which boils down to a
three-sphere estimate: if the $L^2$~norm of a solution to the elliptic
equation is of order~1 over the ball $B_2$ and small over the
ball~$B_{1/2}$, then the $L^2$~norm of the solution on the ball~$B_1$
is small too. Here $B_r$ denotes the ball centered at the origin of
radius~$r$. Although one has effective Carleman estimates for the
Schr\"odinger equation~\cite{KS,Isakov}, this kind of three-sphere inequalities do not hold for the Schr\"odinger
equation. Indeed, just as in the case of the heat equation~\cite{Escauriaza}, one can
construct counterexamples using that, for each $\al>1$, a Tychonov-type argument shows the
existence of smooth solutions to the Schr\"odinger equation
on~$\RR^n\times\RR$ that are bounded as
\[
|u(x,t)|< C e^{C |x|^2/t - t^{-\al}/C}
\]
if $t>0$ and vanish for $t\leq 0$ (see e.g.~\cite[Exercise 2.24]{Tao}). Hence the question of which spacetime domains
possess some kind of quantitative approximation property for the
Schr\"odinger equation remains open.

The proof of our quantitative approximation theorem exploits the
connection, via the time Fourier transform, between solutions to the
Schr\"odinger equation on $D\times\RR$ and solutions to the
Helmholtz--Yukawa equation  on~$D$ (that is, the equation
\[
\De \vp-\tau\vp =0
\]
where the constant $\tau$ can be positive or negative). The first part of
the proof, which is of considerable interest in itself
(Theorem~\ref{T.Helmholtz}), consists of a quantitative approximation
theorem for the Helmholtz--Yukawa equation with frequency-dependent
control of a suitable norm of the global solution on the whole
space~$\RR^n$. The norm we control in this elliptic approximation
theorem is a natural one: it essentially reduces to the
Agmon--H\"ormander seminorm when $\tau<0$ and is a suitable
generalization thereof for $\tau>0$. One should note that, unlike the
R\"uland--Salo quantitative approximation~\cite{Ruland}, where the approximating
solution is only assumed to be defined on a larger but still bounded
domain, here we strive to estimate solutions that are
defined on the whole space~$\RR^n$, and also to keep track of the
dependence of the constants on the ``frequency''~$\tau$.
The second part of the proof of Theorem~\ref{T.qGAT} is
again based on careful manipulations of Bessel functions, which are in
particular employed to replace (modulo small errors) an initial
datum that grows exponentially fast at infinity by a Schwartz one.

\subsection{Organization of the paper}


In Section~\ref{S.Helmholtz} we start by proving a quantitative global
approximation theorem for the Helmholtz--Yukawa equation with
constants that depend on the frequency. In
Section~\ref{S.lemma} we provide a lemma on
non-quantitative approximation for the Schr\"odinger equation that
permits to approximate a solution on a general bounded spacetime
set~$\Om$ as in Theorem~\ref{T.GAT} by a solution on a larger
spacetime cylinder. The proofs of Theorems~\ref{T.GAT}
and~\ref{T.qGAT} are presented in Section~\ref{S.global}. Global
approximation theorems
are crucially used in the proof of our result on vortex reconnection (Theorem~\ref{T.GP}), which is given
in Section~\ref{S.VR}. In Section~\ref{S.observed} we discuss how this
scenario of vortex reconnection presents the key features observed in
the physics literature. In particular, we note that when one considers
solutions to the Gross--Pitaevskii equation that fall off at infinity,
which physically corresponds to the more flexible case of laser beams,
we can prove a somewhat stronger result. The case of the
Gross--Pitaevskii equation on the torus is considered too. The paper
concludes with a short Appendix where we present some calculations
with Bessel functions.

\section{Frequency-dependent global approximation for the
  Helmholtz--Yukawa equation}
\label{S.Helmholtz}

Our objective in this section is to obtain a global approximation
result for the Helmholtz--Yukawa equation. In addition to its
intrinsic interest, this is a key ingredient in the proof of Theorem~\ref{T.qGAT}. There are two aspects that we need to pay attention
to. Firstly, we need to control the dependence on the
frequency~$\tau$. Secondly, we aim to control natural norms of the
global solution over the whole space~$\RR^n$. Since some functions
(such as the fundamental solutions we consider) take a slightly
different form when $n=2$, for the ease of
notation, we will hereafter assume that $n\geq3$. The case $n=2$
only involves minor modifications and can be tackled using the same arguments.

We will first prove several auxiliary lemmas. We start with a stability
lemma where the key point is the explicit dependence of the stability constants
on~$\tau$. Here and in what follows, we will use the notation
\[
  \tau_\pm:=\frac12(|\tau|\pm\tau)
\]
for the positive and negative parts of
the real number~$\tau$. Also, for the ease of notation, given an open set~$D$, we often denote the $L^2(D)$~norm of a
function~$f$ by $\|f\|_D$. Throughout, $B_R$ denotes the ball centered
at the origin of radius~$R$.

\begin{lemma}\label{L.3sphere}
Suppose that the function $\psi\in H^1(D)$ satisfies the elliptic
equation
\[
\De\vp -\tau \vp =0
\]
in a smooth bounded domain~$D$, where $\tau$ is a real constant. If~$D'$ is
another smooth domain whose
closure is contained in~$D$, then the following global stability
estimate holds:
\begin{equation}\label{stab}
\|\vp\|_{L^2(D)}\leq C e^{C\sq}\|\vp\|_{H^1(D)}\, \log^{-\mu}\frac{\|\vp\|_{H^1(D)}}{\|\vp\|_{L^2(D')}}\,.
\end{equation}
Likewise, if $D'\subset\!\subset D''\subset\!\subset D$ is a bounded
domain with a smooth boundary, we have the interior stability
inequality
\begin{equation}\label{stab2}
\|\vp\|_{L^2(D'')}\leq C e^{C\sq}\|\vp\|_{L^2(D)}^\te\|\vp\|_{L^2(D')}^{1-\te}\,.
\end{equation}
Here $C$, $\mu$ and~$\te$ are positive constants that do not depend on~$\tau$.
\end{lemma}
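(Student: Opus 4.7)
My plan is to derive both inequalities from Carleman estimates for the operator $\Delta-\tau$ in which the large parameter is tuned to the frequency. The key observation is that a standard Carleman inequality with weight $e^{s\phi}$, where $\phi$ is a pseudoconvex function, absorbs the potential term $\tau\vp$ provided $s^2\gtrsim|\tau|$; taking $s=s_0+\sq$ with $s_0\geq 1$ fixed then yields multiplicative constants of order $e^{Cs}=Ce^{C\sq}$, with $C$ depending only on the domain and the weight, not on~$\tau$.

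First I would establish a Hadamard-type three-region inequality for $\Delta-\tau$: given concentric balls $B_{r_1}\ssubset B_{r_2}\ssubset B_{r_3}$ inside the domain of $\vp$, one has
\[
\|\vp\|_{L^2(B_{r_2})}\leq Ce^{C\sq}\,\|\vp\|_{L^2(B_{r_1})}^{\theta}\,\|\vp\|_{L^2(B_{r_3})}^{1-\theta},
\]
with $\theta\in(0,1)$ depending only on the radii. The interior estimate~(\ref{stab2}) follows by iterating this three-region inequality along a chain of overlapping balls connecting $D'$ to~$D''$ inside~$D$. Both the number $N$ of iterations and the radii of the balls are dictated only by the geometry of the triple $D'\ssubset D''\ssubset D$, and in particular are independent of~$\tau$, so the cumulative multiplicative constant remains of the form $Ce^{C\sq}$, with $\theta$ in (\ref{stab2}) the resulting exponent.

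For the global estimate~(\ref{stab}) this scheme cannot be applied verbatim, since $D''=D$ forces us to approach $\pd D$. The strategy is to decompose $D=D_\delta\cup(D\setminus D_\delta)$, with $D_\delta:=\{x\in D:\dist(x,\pd D)>\delta\}$, control the interior region $D_\delta$ by a propagation-of-smallness argument that yields a bound of the form $Ce^{C\sq}\|\vp\|_{L^2(D)}^{\theta(\delta)}\|\vp\|_{L^2(D')}^{1-\theta(\delta)}$ with $\theta(\delta)\to 0$ as $\delta\to 0$ at a polynomial rate, and control the thin boundary layer by the elementary estimate $\|\vp\|_{L^2(D\setminus D_\delta)}\leq C\delta^{1/2}\|\vp\|_{H^1(D)}$. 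Optimizing the parameter~$\delta$ in terms of the ratio $\|\vp\|_{H^1(D)}/\|\vp\|_{L^2(D')}$, essentially by equating the two contributions, then produces the logarithmic loss~$\log^{-\mu}$.

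The main obstacle is keeping the factor $e^{C\sq}$ under control throughout the propagation step. A naive iteration of the three-region inequality to reach a $\delta$-interior neighborhood would compound the multiplicative constant to $(Ce^{C\sq})^{N}$ with $N\sim\log(1/\delta)$, producing the unacceptable $\tau$-dependence $e^{C\sq\log(1/\delta)}$. To avoid this, I would perform the propagation using a single Carleman estimate with a globally defined pseudoconvex weight $\phi$ on $D\setminus\overline{D'}$ (for instance a suitably smoothed convex defining function of $\pd D\cup\pd D'$), so that only one multiplicative factor $e^{Cs(\sup\phi-\inf\phi)}=e^{C\sq}$ is incurred. The connectedness of $D\setminus\overline{D'}$ and the smoothness of $\pd D$ and $\pd D'$ are precisely what ensures that such a weight exists.
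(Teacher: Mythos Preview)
Your Carleman-based strategy works for $\tau\leq 0$ but has a genuine gap for $\tau>0$. Recall that $\sq=\tau_-^{1/2}$ vanishes when $\tau>0$, so your choice $s=s_0+\sq$ stays bounded in that regime, while the potential term $\tau\vp$ that must be absorbed does not. Your own criterion $s^2\gtrsim|\tau|$ is therefore violated for large positive~$\tau$, and the argument as written only delivers a constant of order $e^{C|\tau|^{1/2}}$, not the claimed $e^{C\sq}$. The asymmetry between the Helmholtz ($\tau<0$) and Yukawa ($\tau>0$) regimes is precisely the point of the lemma and requires a separate idea.

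The paper treats the two signs of~$\tau$ by different methods. For $\tau<0$ it invokes the Donnelly--Fefferman three-sphere inequality for Laplace eigenfunctions, which gives the $e^{C\sqrt{|\tau|}}=e^{C\sq}$ dependence directly. For $\tau>0$ it uses a parabolic lift: the function $\psi(x,t):=e^{\tau t}\vp(x)$ solves the heat equation $\pd_t\psi-\De\psi=0$, and the three-cylinder inequality of Escauriaza--Vessella applied to~$\psi$ at $t=0$ yields the three-ball bound for~$\vp$ with constant bounded uniformly in~$\tau>0$ (in fact the prefactor $\big((1-e^{-2c\tau})/2\tau\big)^{(1-\te)/2}$ is bounded). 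Once the three-sphere inequality $\|\vp\|_{B_{R_2}}\leq Ce^{C\sq}\|\vp\|_{B_{R_1}}^\te\|\vp\|_{B_{R_3}}^{1-\te}$ is in hand, both~\eqref{stab} and~\eqref{stab2} follow from the standard propagation-of-smallness machinery.

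As a side remark, your concern about compounding the factor $e^{C\sq}$ along a chain of balls is actually unfounded: iterating a three-sphere estimate $a_{k+1}\leq M\, a_k^{\te} b^{1-\te}$ gives $a_N\leq M^{(1-\te^N)/(1-\te)}a_0^{\te^N}b^{1-\te^N}$, and since the exponent of~$M$ is bounded by $1/(1-\te)$ uniformly in~$N$, the cumulative constant remains of order~$e^{C\sq}$ regardless of how long the chain is. The single global Carleman weight you propose is therefore not needed for this step.
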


\begin{proof}
The key ingredient of these stability inequalities is to control the
dependence on~$\tau$ of the 3-sphere
estimate
\begin{equation}\label{3sph}
\|\vp\|_{B_{R_2}}\leq Ce^{C\sq} \|\vp\|_{B_{R_1}}^\te \|\vp\|_{B_{R_3}}^{1-\te}\,.
\end{equation}
We will shortly show that this inequality holds for concentric balls of radii $R_1<R_2<R_3$ contained
in~$D$ for some~$\te>0$ depending on the radii but not
on~$\tau$. One this 3-sphere inequality has been established, a standard argument of propagation of
smallness~\cite{Alessandrini} yields the interior stability
result~\eqref{stab2}, and also the stability
estimate up to the boundary~\eqref{stab}.

To derive the basic estimate~\eqref{stab}, we notice that when
$\tau<0$, the estimate~\eqref{3sph} was proved by
Donnelly--Fefferman~\cite{Donnelly}. If $\tau>0$, we use that
$\psi(x,t):= e^{\tau t} \vp(x)$ satisfies the heat equation
\[
\pd_t\psi-\De\psi=0
\]
on $D\times\RR$. A result of Escauriaza--Vessella~\cite{Escauriaza}
then shows
\[
\int_{B_{R_2}}|\psi(x,0)|^2\, dx\leq
C\bigg(\int_{B_{R_1}}|\psi(x,0)|^2\, dx\bigg)^\te
  \bigg(\int_{-c}^0\int_{B_{R_3}}|\psi(x,t)|^2\, dx\, dt\bigg)^{1-\te}
\]
for some positive constants $C,c$, which translates into
\[
\|\vp\|_{B_{R_2}}\leq C
\bigg(\frac{1-e^{-2c\tau}}{2\tau}\bigg)^{\frac{1-\te}2}\|\vp\|_{B_{R_1}}^\te \|\vp\|_{B_{R_3}}^{1-\te}\,.
\]
This readily implies
\[
\|\vp\|_{B_{R_2}}\leq C \|\vp\|_{B_{R_1}}^\te \|\vp\|_{B_{R_3}}^{1-\te}\,,
\]
thereby completing the proof of the 3-sphere inequality~\eqref{3sph}.
\end{proof}

In the following lemma we compute a fundamental solution for the
operator $\De-\tau$ with the sharp decay at infinity:

\begin{proposition}\label{P.fund}
Suppose that $n\geq3$. The function
\begin{equation}\label{defG}
  G_\tau(x):=\begin{cases}
    \be_n\, \tau^{\frac{n-2}4}\, \dfrac{K_{\frac n2-1}(\tau^{1/2} |x|)}{|x|^{\frac n2-1}} & \text{if }
    \tau> 0\,,\\[2mm]
    \be_n' |x|^{2-n}& \text{if }
    \tau= 0\,,\\[2mm]
    \be_n''\, |\tau|^{\frac{n-2}4}\, \dfrac{Y_{\frac n2-1}(\sqa |x|)}{|x|^{\frac n2-1}} & \text{if } \tau<0
  \end{cases}
\end{equation}
satisfies the distributional equation
\begin{equation}\label{fundsol}
\De G_\tau-\tau G_\tau=\de_0
\end{equation}
on~$\RR^n$.  Here $Y_\nu$ and~$K_\nu$ denote the Bessel function and
the modified Bessel function of the second kind, respectively, and the
normalization constants depend on the dimension. Furthermore, $G_\tau$
can be written as
\[
G_\tau(x)=|x|^{2-n} H_\tau(x)\,,
\]
where $H_\tau$
is bounded as
\begin{equation}\label{boundG}
| H_\tau(x)|\leq C_n 
\langle\sqa|x|\rangle^{\frac{n-3}2}\, e^{-\sqP|x|}\,.
\end{equation}
\end{proposition}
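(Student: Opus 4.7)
The proof splits into three parts: verifying that $G_\tau$ satisfies the PDE away from the origin, fixing the normalization constants so that the equation promotes to~\eqref{fundsol} distributionally, and establishing the pointwise bound on $H_\tau$ from Bessel asymptotics.

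First, since $G_\tau$ is radial, writing $G_\tau(r)=r^{1-n/2}F(\sqa r)$ reduces $\Delta G_\tau-\tau G_\tau=0$ on $\RR^n\backslash\{0\}$ to the modified Bessel equation of order $n/2-1$ when $\tau>0$, the standard Bessel equation when $\tau<0$, and an Euler equation when $\tau=0$. The functions $K_{n/2-1}(\sqp r)$, $Y_{n/2-1}(\sqa r)$ and $r^{2-n}$ appearing in~\eqref{defG} are precisely the solutions of these equations that exhibit decay or bounded oscillation at infinity, so~\eqref{defG} yields a classical solution of $\Delta G_\tau-\tau G_\tau=0$ on $\RR^n\backslash\{0\}$.

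To handle the singularity at the origin and identify the constants $\be_n, \be_n', \be_n''$, I would invoke the small-argument expansions $K_\nu(z)=\tfrac12\Gamma(\nu)(z/2)^{-\nu}+O(z^{-\nu+2})$ and $Y_\nu(z)=-\tfrac1\pi\Gamma(\nu)(z/2)^{-\nu}+O(z^{-\nu+2})$ valid for $\nu>0$. The $\tau$-dependent prefactors $\tau^{(n-2)/4}$ and $|\tau|^{(n-2)/4}$ in~\eqref{defG} are precisely those needed to cancel the $\sqa$-dependence of the leading singularities, so that $G_\tau(x)\sim c_n|x|^{2-n}$ as $x\to0$ with a universal constant $c_n$ in all three regimes. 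The normalization constants $\be_n, \be_n', \be_n''$ are then fixed so that $c_n$ is the standard Laplace fundamental-solution constant. Since $G_\tau(x)-c_n|x|^{2-n}$ extends continuously across the origin, the distributional identity~\eqref{fundsol} reduces to the classical one $\Delta(c_n|x|^{2-n})=\de_0$.

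Finally, $H_\tau(x)=|x|^{n-2}G_\tau(x)=\be_n(\sqp|x|)^{n/2-1}K_{n/2-1}(\sqp|x|)$ for $\tau>0$, with analogous expressions in the other two cases. To prove~\eqref{boundG} I would combine the boundedness of $z^\nu K_\nu(z)$ and $z^\nu Y_\nu(z)$ near $z=0$ with the large-argument asymptotics $K_\nu(z)\sim\sqrt{\pi/(2z)}\,e^{-z}$ and $|Y_\nu(z)|\leq Cz^{-1/2}$ as $z\to\infty$. The Japanese bracket $\langle\sqa|x|\rangle^{(n-3)/2}$ naturally interpolates between the two regimes, and the factor $e^{-\sqP|x|}$ is trivial when $\tau\leq0$. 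The main obstacle is making this last estimate uniform in $\tau$: one needs uniform control of $z^\nu K_\nu(z)$ and $z^\nu Y_\nu(z)$ for all $z\geq0$ with the precise exponential rate $\sqP$, which is the content of the Bessel calculations in the Appendix.
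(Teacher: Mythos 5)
Your proof is correct and follows the same route as the paper: check the ODE away from the origin, fix the dimensional constants using the small-argument Bessel asymptotics so that the leading singularity matches the Laplace kernel $|\SS^{n-1}|^{-1}|x|^{2-n}$, and derive~\eqref{boundG} from the pointwise Bessel bounds $|K_\nu(r)|\leq C_\nu\langle r\rangle^{\nu-1/2}e^{-r}/r^\nu$ and $|Y_\nu(r)|\leq C_\nu\langle r\rangle^{\nu-1/2}/r^\nu$. One small correction to your last paragraph: those bounds are standard fixed-order estimates uniform over $r>0$ (and here the order $\nu=n/2-1$ is fixed), so the uniformity in $\tau$ is automatic once they are evaluated at $r=\sqa|x|$; they are not what Lemma~\ref{L.Bessel} in the Appendix provides---that lemma gives lower bounds for the integrals $\cI_\nu(\al)$ of $|I_\nu|^2$ uniformly in both $\nu$ and $\al$, which is used elsewhere in the paper and is a genuinely harder statement than what Proposition~\ref{P.fund} needs.
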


\begin{proof}
A straightforward computation in spherical coordinates shows that
$G_\tau(x)$ satisfies the equation
\[
  \De G_\tau-\tau G_\tau=0
\]
on~$\RR^n\backslash \{0\}$. In view of the asymptotic behavior of the Bessel functions at~0 that~$G_\tau$, the dimensional constants can be chosen so
that
\[
G_\tau(x)=\frac1{|\SS^{n-1}||x|^{n-2}}+ O(|x|^{3-n})\,,
\]
so it is standard that it satisfies the distributional
equation~\eqref{fundsol}.

To estimate~$G_\tau$, recall that the Bessel functions $Y_\nu$ and
$K_\nu$ are bounded for $r>0$ as
\[
|Y_\nu(r)|\leq C_\nu\frac{\langle r\rangle^{\nu-\frac12}}{r^\nu}\,,\qquad 
|K_\nu(r)|\leq C_\nu \frac{\langle r\rangle^{\nu-\frac12}}{r^\nu}\, e^{-r}\,.
\]
Plugging these estimates into the expression for~$G_\tau$ results
in~\eqref{boundG}.
\end{proof}

We shall also need frequency-dependent estimates for the convolution
of the fundamental solution with a compactly supported function:

\begin{lemma}\label{L.consts}
Let $w:= G_\tau* f$ with a function $f$ supported on a bounded
domain~$Y\subset\RR^n$. Given any bounded domain $B\subset\RR^n$, one
has
\[
\|w\|_{L^2(B)}+ \langle\tau\rangle^{-\frac12} \|w\|_{H^1(B)} +
\langle\tau\rangle^{-1} \|w\|_{H^2(B)}\leq C\langle\tau_-\rangle^{\frac{n-3}4}\,\|f\|_{L^2(Y)}\,,
\]
where the constant~$C$ depends on $n$, $B$ and~$Y$ but not on~~$\tau$.
\end{lemma}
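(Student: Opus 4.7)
The plan is to combine the explicit pointwise bound on the fundamental solution $G_\tau$ from Proposition~\ref{P.fund} with Young's convolution inequality for the $L^2$ estimate, and then to bootstrap to $H^1$ and $H^2$ via interior elliptic regularity for the ordinary Laplacian, using the distributional identity $\De w-\tau w=f$ on~$\RR^n$ that $w=G_\tau*f$ satisfies by virtue of~\eqref{fundsol}.

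First, for the $L^2$ bound, I would apply Young's inequality in the form $L^1*L^2\hookrightarrow L^2$, after extending $f$ by zero off $Y$ and truncating $G_\tau$ to $B-Y:=\{x-y:x\in B,\,y\in Y\}$; this gives
\[
\|w\|_{L^2(B)}\leq \|G_\tau\|_{L^1(B-Y)}\,\|f\|_{L^2(Y)}\,.
\]
Since $B-Y$ is bounded, say of diameter~$R$, I then plug the estimate~\eqref{boundG} into the right-hand side. When $\tau\geq 0$ one has $\langle\sqa|x|\rangle^{(n-3)/2}e^{-\sqP|x|}=\langle\sqp|x|\rangle^{(n-3)/2}e^{-\sqp|x|}$, which is uniformly bounded because the map $r\mapsto\langle r\rangle^{(n-3)/2}e^{-r}$ is bounded on $[0,\infty)$ for $n\geq 3$; hence $|G_\tau(x)|\leq C|x|^{2-n}$ and $\|G_\tau\|_{L^1(B-Y)}\leq C$. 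When $\tau<0$ the exponential factor is~$1$, and on the bounded set $B-Y$ one simply uses $\langle\sqa|x|\rangle^{(n-3)/2}\leq C\langle\sqa\rangle^{(n-3)/2}=C\langle\tau_-\rangle^{(n-3)/4}$, yielding $\|G_\tau\|_{L^1(B-Y)}\leq C\langle\tau_-\rangle^{(n-3)/4}$. In both cases $|x|^{2-n}$ is integrable over bounded sets because $n\geq 3$.

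For the $H^2$ bound, I would fix a bounded smooth domain $\tilde B$ with $B\ssubset\tilde B$. From~\eqref{fundsol} one has $\De w=\tau w+f$ distributionally on~$\RR^n$, so classical interior elliptic regularity for the Laplacian, with constants depending only on $B$ and~$\tilde B$ and in particular independent of~$\tau$, yields
\[
\|w\|_{H^2(B)}\leq C\bigl(\|\De w\|_{L^2(\tilde B)}+\|w\|_{L^2(\tilde B)}\bigr)\leq C\bigl((1+|\tau|)\,\|w\|_{L^2(\tilde B)}+\|f\|_{L^2(Y)}\bigr)\,.
\]
Applying the $L^2$ estimate just derived with $\tilde B$ in place of $B$ gives $\|w\|_{H^2(B)}\leq C\langle\tau\rangle\langle\tau_-\rangle^{(n-3)/4}\|f\|_{L^2(Y)}$, i.e.\ the desired bound once multiplied by $\langle\tau\rangle^{-1}$. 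The $H^1$ estimate then follows from the Gagliardo--Nirenberg-type interpolation $\|w\|_{H^1(B)}\leq C\bigl(\|w\|_{L^2(\tilde B)}^{1/2}\|w\|_{H^2(\tilde B)}^{1/2}+\|w\|_{L^2(\tilde B)}\bigr)$, which produces $\|w\|_{H^1(B)}\leq C\langle\tau\rangle^{1/2}\langle\tau_-\rangle^{(n-3)/4}\|f\|_{L^2(Y)}$.

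I do not foresee any serious obstacle: all the ingredients are either explicit (the $L^1$ estimate for $G_\tau$ on a bounded set via~\eqref{boundG}) or classical with $\tau$-independent constants (interior regularity for the Laplacian, Gagliardo--Nirenberg interpolation). The one point requiring a small amount of care is to check that $\langle\sqp|x|\rangle^{(n-3)/2}e^{-\sqp|x|}$ is bounded uniformly in $\tau\geq 0$ and $x\in\RR^n$ for $n\geq 3$, which is elementary.
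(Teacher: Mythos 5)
Your argument is correct and follows essentially the same path as the paper's: the $L^2$ bound from the pointwise estimate \eqref{boundG} on a bounded set, the $H^2$ bound from the equation $\De w=\tau w+f$ together with the $L^2$ bound, and the $H^1$ bound by interpolation. The only cosmetic differences are that you invoke Young's inequality (via $\|G_\tau\|_{L^1(B-Y)}$) where the paper uses the Riesz-potential formulation of the same estimate, and that you are a bit more careful in inserting a slightly larger domain $\tilde B$ when applying interior elliptic regularity; neither changes the substance of the argument.
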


\begin{proof}
  The bound~\eqref{boundG} implies that
  \[
\sup_{x\in B,\; y\in Y} |H_\tau(x-y)|\leq C\langle\tau_-\rangle^{\frac{n-3}4}\,,
  \]
so  we readily  obtain, for all $x\in B$,
  \[
|w(x)|\leq \int_Y \frac{|H_\tau(x-y)\, f(y)|}{|x-y|^{n-2}}\, dy\leq C\langle\tau_-\rangle^{\frac{n-3}4}\int_Y \frac{ |f(y)|}{|x-y|^{n-2}}\, dy\,.
  \]
  Standard estimates for Riesz potentials then yield
  \[
\|w\|_B\leq C\langle\tau_-\rangle^{\frac{n-3}4}\|f\|_Y\,.
  \]
To estimate the second derivatives of~$w$, we use the equation $\De w= \tau
w+ f$, obtaining
\[
\| w\|_{H^2(B)}=\|w\|_B+ \|\De w\|_B\leq C |\tau|\|w\|_B +C \|f\|_Y\leq C \langle\tau\rangle \langle\tau_-\rangle^{\frac{n-3}4}\|f\|_Y\,.
\]
The estimate for~$\| w\|_{H^1(B)}$ then follows by interpolation.
\end{proof}

The norm that we will employ to control the growth of solutions to the
Helmholtz--Yukawa equation at infinity (with $\tau\neq0$) is
\[
\triple \vp:= \limsup_{R\to\infty} \bigg(\frac1R\int_{B_R}|
\vp(x)|^2\,e^{-2\sqP|x|} \, dx\bigg)^{1/2}\,.
\]
To motivate the choice of this norm, recall that the natural norm to control solutions to the
Helmholtz equation
\[
\De\vp+\la^2 \vp=0
\]
on~$\RR^n$ is the Agmon--H\"ormander seminorm~\cite{Hormander,Vega2}
\begin{equation}\label{AH}
\limsup_{R\to\infty} \frac1R\int_{B_R}|
\vp(x)|^2\, dx\,.
\end{equation}
Indeed, a solution to the Helmholtz equation with sharp decay can be
written as the Fourier transform of a measure on the sphere with an
$L^2$ density,
\begin{equation}\label{vpx}
\vp(x)=\int_{\SS^{n-1}} e^{i\xi\cdot x}\,f(\xi)\, d\si(\xi)\,,
\end{equation}
and the norm $\|f\|_{L^2(\SS^{n-1})}$ turns out to be equivalent
to~\eqref{AH}~\cite[Theorem 7.1.28]{Hormander}. Since we are also concerned with solutions to the
Yukawa equation (that is, the Helmholtz equation with a negative
sign), one can replace the
definition~\eqref{AH} as above (which is also sharp when $\tau>0$).
It is standard that the only solution to the equation
\[
\De\vp-\tau \vp=0
\]
on~$\RR^n$ whose associated seminorm $\triple \vp$ is zero is the trivial solution
$\vp=0$. For technical reasons, we also define the weighted $L^\infty$~norm
\[
\triplei\vp:= \sup_{x\in\RR^n} \langle x\rangle^{\frac {n-1}2}
e^{-\sqP|x|} |\vp(x)|\,.
\]
Obviously this is a stronger norm, as
\[
\triple \vp \leq C\triplei \vp
\]
for any function~$\vp$.

The main result of this section, which builds upon ideas of
R\"uland--Salo~\cite{Ruland}, is the following global approximation
theorem for the Helmholtz--Yukawa equation. It should be stressed that
the reason for which the statement is more technically involved than
one would have liked is that we want to control both the case of large
$|\tau|$ (which works just fine) and the case of small~$|\tau|$. While
the latter case does not present any essential difficulties, it is
awkward to write estimates that are uniform in~$\tau$. This is due to
the fact that the triple norm obviously
collapses in the case $\tau=0$ (i.e., because harmonic functions do
not decay on average at infinity). This leads to the introduction of a
constant~$\tau_1$ to write estimates that we can conveniently invoke in later
sections. Still, the information that one can prove in the case of small~$|\tau|$ (and
in particular in the case of harmonic functions) is more precise than
what we state here, so the interested reader should take a look at
Step~4 of the proof.

\begin{theorem}\label{T.Helmholtz}
  Let $\vp\in H^1(D)$ satisfy the equation
  \[
    \De\vp-\tau\vp=0
  \]
  in a bounded domain~$D\subset\RR^n$. Let us fix some $\tau_1>0$. Assume that the complement $D\co$ is
  connected and set
  \begin{equation}\label{defNep}
N_{\ep,\tau}:= \exp\frac{C
  \langle\tau\rangle^{\frac12}e^{C\sq}}\ep\,,\qquad \tN_{\ep,\tau}:=
\bigg(\frac{\langle\tau\rangle}\ep\bigg)^C e^{C\sq}\,.
\end{equation}
Then, for each $\ep\in(0,1)$, one can find a solution of the equation
$\De\psi-\tau\psi=0$ on~$\RR^n$ such that:
  \begin{enumerate}
    \item If $|\tau|>\tau_1$, $\psi$ approximates~$\vp$ in the whole domain
  \[
\|\vp -\psi\|_{L^2(D)}\leq \ep \|\vp\|_{L^2(D)}^{1/2}\|\vp\|_{H^1(D)}^{1/2}
  \]
and is bounded as
  \[
\triplei{\psi}\leq (N_{\ep,\tau})^{ N_{\ep,\tau}}\|\vp\|_{L^2(D)}\,.
\]

\item Given a smaller subset $D'\ssubset D$,  if $|\tau|>\tau_1$, $\psi$ approximates~$\vp$ on~$D'$ as
  \[
\|\vp -\psi\|_{L^2(D')}\leq \ep \|\vp\|_{L^2(D')}^{1/2}\|\vp\|_{H^1(D')}^{1/2}
\]
and is bounded as
\[
\triplei{\psi}\leq (\tN_{\ep,\tau})^{\tN_{\ep,\tau}}\|\vp\|_{L^2(D)}\,.
\]

\item If $|\tau|\leq\tau_1$, $\psi$ approximates~$\vp$ on the whole domain as
  \[
\|\vp -\psi\|_{L^2(D)}\leq \ep \|\vp\|_{L^2(D)}^{1/2}\|\vp\|_{H^1(D)}^{1/2}
\]
and is bounded as
\[
|\psi(x)|\leq  (N_{\ep,1}\langle x\rangle)^{N_{\ep,1}}e^{\sqP |x|}\|\vp\|_{L^2(D)}\,.
\]
Furthermore, $\triplei\psi<\infty$ for
all~$\tau\neq0$.

\item  Given a smaller subset $D'\ssubset D$, if $|\tau|\leq\tau_1$, $\psi$ approximates~$\vp$ on~$D'$ as
  \[
\|\vp -\psi\|_{L^2(D')}\leq \ep \|\vp\|_{L^2(D')}^{1/2}\|\vp\|_{H^1(D')}^{1/2}
\]
and is bounded as
\[
|\psi(x)|\leq  (\tN_{\ep,1}\langle x\rangle)^{\tN_{\ep,1}}e^{\sqP |x|}\|\vp\|_{L^2(D)}\,.
\]
Furthermore, $\triplei\psi<\infty$ for
all~$\tau\neq0$.
\end{enumerate}
The constants only depend on the domains~$D$ and~$D'$ and on~$\tau_1$.
\end{theorem}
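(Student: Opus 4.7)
My approach follows the R\"uland--Salo strategy~\cite{Ruland}, modified to produce globally defined approximating solutions on all of~$\RR^n$ and to keep careful track of the dependence on~$\tau$. The analytic inputs are the stability inequality of Lemma~\ref{L.3sphere}, the fundamental solution from Proposition~\ref{P.fund}, and its convolution estimates from Lemma~\ref{L.consts}. I would fix a smooth bounded auxiliary domain $\widetilde D \ssupset D$ with connected complement and a compactly supported bump region $Y \subset \widetilde D \setminus \overline D$, and seek the approximant in single-layer form $\psi = G_\tau * g$ with $g \in L^2(Y)$. The pointwise decay~\eqref{boundG} ensures $\triplei{\psi} < \infty$ whenever $\tau \neq 0$.

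For the qualitative density of the resulting class in the space of local solutions on~$D$, a Hahn--Banach argument reduces matters to showing that if $f \in L^2(D)$ is orthogonal to every translate $G_\tau(\cdot-y)|_D$ with $y \in Y$, then $f$ annihilates every local solution of $\De\vp - \tau\vp = 0$. For such $f$, the function $u(y) := \int_D G_\tau(x-y)\, f(x)\,dx$ vanishes on~$Y$ and satisfies $(\Delta - \tau)u = 0$ on~$D\co$; unique continuation on the connected open set $D\co$ forces $u \equiv 0$ there, after which the decay of $G_\tau$ (exponential for $\tau > 0$, algebraic for $\tau = 0$, outgoing-wave for $\tau < 0$) closes the usual integration-by-parts argument $\int_D f\,\bar\vp\,dx = 0$ for every local solution~$\vp$.

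For the quantitative estimate, I would apply a singular-value truncation to the compact operator $T_\tau : L^2(Y) \to L^2(D)$, $T_\tau g := (G_\tau * g)|_D$, using that Lemma~\ref{L.consts} bounds its $L^2(Y)\to H^1(D)$ norm by $C\langle\tau\rangle^{1/2}\langle\tau_-\rangle^{(n-3)/4}$. The stability estimate~\eqref{stab} applied to $T_\tau g$ converts $L^2(D')$-smallness into $L^2(D)$-smallness with a logarithmic gain and an $e^{C\sq}$ prefactor; this yields a lower bound on the singular values of~$T_\tau$, and a standard truncation produces, for any $\vp$, a $g$ with $\|T_\tau g - \vp\|_{L^2(D)} \leq \ep\|\vp\|_{L^2(D)}^{1/2}\|\vp\|_{H^1(D)}^{1/2}$ at the cost $\|g\|_{L^2(Y)} \leq N_{\ep,\tau}^{N_{\ep,\tau}}\|\vp\|_{L^2(D)}$, the doubly-exponential factor arising because $\log(1/\ep)$ must be inverted through the logarithmic modulus of~\eqref{stab}. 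Converting $\|g\|_{L^2(Y)}$ into $\triplei\psi$ via~\eqref{boundG} yields parts~(i) and~(iii); for the interior case $D'\ssubset D$ of parts~(ii) and~(iv), I would replace~\eqref{stab} by the two-sided interior stability~\eqref{stab2}, whose polynomial (rather than logarithmic) loss produces $\tN_{\ep,\tau}$ in place of $N_{\ep,\tau}$.

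For $|\tau| \leq \tau_1$ the triple norm collapses as $\tau \to 0$, so rather than formulating the estimate in terms of $\triplei\psi$ I would record the convolution bound pointwise: inserting the algebraic majorant of $G_\tau$ from Proposition~\ref{P.fund} into $\psi = G_\tau * g$ produces $|\psi(x)| \leq (N_{\ep,1}\langle x\rangle)^{N_{\ep,1}} e^{\sqP|x|}\|\vp\|_{L^2(D)}$, while finiteness of $\triplei\psi$ for every $\tau \neq 0$ is immediate from~\eqref{boundG}. The main obstacle throughout will be uniform bookkeeping of the $\tau$-dependence so that the three sources of $\tau$-growth---the $e^{C\sq}$ in~\eqref{stab}, the $\langle\tau_-\rangle^{(n-3)/4}$ in Lemma~\ref{L.consts}, and the weight in $\triplei{\cdot}$---combine exactly into the advertised constants $N_{\ep,\tau}^{N_{\ep,\tau}}$ and $\tN_{\ep,\tau}^{\tN_{\ep,\tau}}$ without spurious $\tau$-factors, simultaneously across the whole range of $\tau$.
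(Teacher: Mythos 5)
Your Step-1 strategy — SVD truncation of the compactly-supported source operator $T_\tau: L^2(Y)\to L^2(D)$, with the stability inequality of Lemma~\ref{L.3sphere} feeding into the error estimate — does match the paper's first step. But the proposal contains a genuine gap that would make the proof fail as written: the function $\psi := G_\tau * g$ with $g \in L^2(Y)$ supported on the open set $Y$ is \emph{not} a global solution of $\De\psi - \tau\psi = 0$ on $\RR^n$. Indeed $(\De - \tau)\psi = g$, which is nonzero on~$Y$, so $\psi$ solves the homogeneous equation only away from~$Y$. The theorem demands a solution on all of $\RR^n$, and the single-layer/volume-potential form cannot produce one. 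Your finiteness claim $\triplei\psi<\infty$ and the conversion of $\|g\|_{L^2(Y)}$ into $\triplei\psi$ are therefore applied to the wrong object.

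What the paper does, and what is missing from your proposal, is a second major step. After obtaining $F\in L^2(Y)$ with $\|F\|_Y \lesssim \exp(C\langle\tau\rangle^{1/2}e^{C\sq}/\ep)\|\vp\|_D$, one observes that $w := G_\tau*F$ is a solution of $\De w-\tau w=0$ on a ball $B''$ containing $\overline D$ and disjoint from~$Y$. Expanding $w$ in spherical harmonics on~$B''$, each radial coefficient, being regular at the origin, is necessarily of the form $A_{lm}\, r^{1-n/2} I_{l+n/2-1}(r\sqrt\tau)$; unlike~$w$ itself, each such term \emph{does} extend to a genuine global solution on~$\RR^n$. Truncating the series at a degree $l_0 \approx N_{\ep,\tau}$ yields a bona fide global solution $\psi$ that still approximates~$\vp$. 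This step is also where the doubly iterated exponential in $N_{\ep,\tau}^{N_{\ep,\tau}}$ actually comes from: the coefficients $A_{lm}$ are bounded via the lower estimate on $\cI_\nu$ in Lemma~\ref{L.Bessel} by roughly $(l/c)^{l}$, which for $l$ up to $l_0$ gives $l_0^{l_0}$ — not, as you suggest, from inverting the logarithmic modulus of~\eqref{stab}, which only produces the single exponential in $\|F\|_Y$. The interior case (ii)/(iv) is handled the same way with $\tN_{\ep,\tau}$, and case (iii)/(iv) for small $|\tau|$ needs a separate pointwise bound because the triple norm degenerates as $\tau\to 0$, but the structural issue is the same throughout. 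Without this Bessel-truncation step your proposal does not prove the theorem.
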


\begin{proof}
  Let us start by assuming that $|\tau|>\tau_1$ and proving the estimates up to the boundary, which
  correspond to the first item of the statement. The estimates in the
  smaller domain~$D'$ will be tackled in Step~3, while uniform
  estimates for
  $|\tau|\leq\tau_1$ will be presented in Step~4.\smallskip
  
\step{Step 1: Approximation by solutions with localized
    sources}  Let us consider a ball~$B$ containing the closure of~$D$
  and fix a bounded open set $ Y \subset\!\subset
  \RR^n\backslash \overline B$. Let us define the space of solutions
  \[
\cX:=\{ \phi \in L^2(D): \De\phi-\tau \phi=0\}\,.
  \]

 Consider the linear operator $L^2(Y)\to \cX$ defined by
  \begin{equation}\label{map}
f\mapsto G_\tau * f|_D
\end{equation}
and denote by~$\cK$ its kernel. We are interested in the
orthogonal complement
\[
\cKp :=\bigg\{ f\in L^2(Y): \int_Y f\overline g\, dx=0 \text{ for all }
g\in \cK\bigg\}\,,
\]
so we denote by $\cP: L^2(Y)\to \cKp$ the orthogonal projection. Let us call $A: \cKp\to \cX$ the restriction of the linear
map~\eqref{map} to this set. If $\phi$ is any function in~$L^2(D)$ and
$f\in L^2(Y)$, it
is clear that
\[
\int_D \phi(x)\, (G_\tau*f)(x)\, dx=\int_Y f(y)\, (G_\tau* \phi)(y)\,
dy\,,
\]
so the adjoint $A^*: \cX\to \cKp$ of~$A$ must be given by
\[
A^*\phi= \cP(G_\tau* \phi|_Y)\,.
\]
Furthermore, for $\phi\in L^2(D)$, if $g\in \cK$ one obviously has
\[
  \int_Y g(y)\, (G_\tau* \phi)(y)\, dy=\int_D \phi(x)\,
  (G_\tau*g)(x)\, dx =0\,,
\]
so one can drop the projector in the above expression for~$A^*$ and
simply write
\[
A^*\phi=G_\tau* \phi|_Y\,.
\]

The map $A^*A$ is a positive, compact, self-adjoint operator on
$\cKp$. Furthermore, the range of~$A$ is dense on~$\cX$ by standard non-quantitative
global approximation theorems for elliptic equations~\cite{Browder}. It then follows that there are positive
constants~$\al_j$ and orthonormal bases
$\{f_j\}_{j=1}^\infty$ of~$\cKp$ and $\{\phi_j\}_{j=1}^\infty$
of~$\cX$ such that
\[
Af_j=\al_j\phi_j\,,\qquad A^*\phi_j=\al_jf_j\,.
\]
Given a solution $\vp\in \cX$, let us consider its decomposition
\[
\vp= \sum_{j=1}^\infty \be_j \phi_j
\]
and, for any~$\al>0$, define the element of~$\cKp$
\[
F:= \sum_{\{j\; :\; \al_j > \al\}} \frac{\be_j}{\al_j} f_j\,.
\]
Note that obviously
\begin{equation}\label{normF}
\|F\|_Y = \bigg(\sum_{\{j\,: \,\al_j<\al\}} \frac{|\be_j|^2}{\al_j^2}
\bigg)^{1/2}\leq \frac{\|\vp\|_D}\al\,.
\end{equation}

As $\al\to0$, it is clear that $AF$ defines an approximation
of~$\vp$. To control how accurate this approximation is, let
\[
E:= \vp-AF = \sum_{\{j\; :\; \al_j < \al\}} \be_j \phi_j\in \cX
\]
denote the error, which is supported on~$D$. It is obviously bounded as
\begin{equation}\label{Erough}
  \|E\|_{D}= \bigg(\sum_{\{j\,: \,\al_j<\al\}} |\be_j|^2 \bigg)^{1/2}\leq \|\vp\|_{D}\,.
\end{equation}
In order to derive better
bounds, let us consider the function $w:=G_\tau* E$, which satisfies
the equation
\[
\De w-\tau w= E\,.
\]
Its $L^2$~norm on~$Y$ is obviously
given by
\begin{equation}\label{wY}
\|w\|_{Y}^2= \|A^*E\|_{Y}^2=\sum_{\{j\; :\; \al_j <
  \al\}} \al_j^2|\be_j|^2\leq \al^2\|E\|_{D}^2\,.
\end{equation}
We now estimate
\begin{align*}
\|E\|_D ^2 &=\int_D E\, (\De w-\tau w)\, dx= \int_D \vp\, (\De w-\tau
             w)\, dx = \int_{\pd D} (\vp\,\pd_\nu w- w\,\pd_\nu \vp)\,
             d\si\\
  &\leq C\|\vp\|_{H^{\frac12}(\pd D)} \|w\|_{H^{\frac12}(\pd D)}\leq C \|\vp\|_{H^1(D)}\|w\|_{H^{\frac12}(\pd D)}\,.
\end{align*}
In passing to the second equality we have used that $E=\De w-\tau w$
is orthogonal to $AF= \vp-E$. To pass to the third equality we
integrate by parts and use that $\De\vp-\tau\vp=0$. For the last
estimate we use the trace inequality.

To control the norm $\|w\|_{H^{\frac12}(\pd D)}$, we can use the trace
inequality and take a domain~$B'$ containing $\overline D\cup Y$ to
write
\begin{equation*}
\|w\|_{H^{\frac12}(\pd D)}\leq C\|w\|_{H^1(B'\backslash D)}\leq C\|w\|_{L^2(B'\backslash D)}^{\frac12}\|w\|_{H^2(B'\backslash D)}^{\frac12}\,.
\end{equation*}
For the last inequality we have simply interpolated the $H^1$~norm
of~$w$. As $w= G_\tau* E$, the $H^2$~norm of~$w$ can be estimated using
Lemma~\ref{L.consts} and the rough bound~\eqref{Erough} as
\begin{equation}\label{wH2}
\|w\|_{H^2(B')}\leq C
\langle\tau\rangle\langle\tau_-\rangle^{\frac{n-3}4}\|E\|_{D}\leq C \langle\tau\rangle\langle\tau_-\rangle^{\frac{n-3}4}\|\vp\|_{D}\,.
\end{equation}

For the $L^2(B'\backslash D)$~norm of~$w$ we use a finer argument
that employs the stability estimate stated in
Lemma~\ref{L.3sphere}. The starting point is that~$w$ satisfies the
equation $\De w-\tau w=0$ on~$B'\backslash\overline{D}$ and that its
$L^2$~norm on the subset $Y\subset B'\backslash D$ is small by~\eqref{wY}. Hence one can write
\begin{align}
\|w\|_{B'\backslash D}& \leq C e^{C\sq}\|w\|_{H^1(B'\backslash D)}\,
                        \log^{-\mu}\frac{\|w\|_{H^1(B'\backslash D
                        )}}{\|w\|_{Y}}\label{estimw}\\
  &\leq C
    e^{C\sq}\langle\tau\rangle^{\frac12}\langle\tau_-\rangle^{\frac{n-3}4}\|E\|_{D}\log^{-\mu}\frac{C
    \langle\tau\rangle^{\frac12}\langle\tau_-\rangle^{\frac{n-3}4}}{\al
    }\notag\\
  &\leq C e^{C\sq}\langle\tau\rangle^{\frac12}\|\vp\|_{D}\log^{-\mu}\frac{C \langle\tau\rangle^{\frac12}\langle\tau_-\rangle^{\frac{n-3}4}}{\al }\,. \notag
\end{align}
Here we have also used the bounds for the $H^1$~norm of~$w= G_\tau*E$ derived in
Lemma~\ref{L.consts}.

Putting all the estimates together, we have shown that for any~$\al>0$
one can take a function $F\in L^2(Y)$ such that $w:= G_\tau*F$ is
bounded as in~\eqref{normF} and approximates~$\vp$ in~$D$ as
\[
\|w-\vp\|_D\leq C e^{C\sq}\langle\tau\rangle^{\frac12}\|\vp\|_{D}^{\frac12}\|\vp\|_{H^1(D)}^{\frac12}\log^{-\mu'}\frac{C \langle\tau\rangle^{\frac12}\langle\tau_-\rangle^{\frac{n-3}4}}{\al }\,,
\]
with $\mu'>0$. Equivalently, for any $\ep>0$ one can choose~$F$ as
above such that $w:= G_\tau *F$ approximates~$\vp$ as
\begin{equation}\label{wvp}
\|w-\vp\|_D\leq \ep \|\vp\|_{D}^{\frac12}\|\vp\|_{H^1(D)}^{\frac12}
\end{equation}
and is bounded as
\begin{equation}\label{FY}
\|F\|_Y\leq C\|\vp\|_D\exp\frac{C \langle\tau\rangle^{\frac12}e^{C\sq}}\ep\,.
\end{equation}

\step{Step~2: Approximation by a global solution with controlled
  behavior at infinity} Let us start by choosing a slightly smaller
ball $B'' \subset\!\subset B$. Since $w:= G_\tau*F$
satisfies the equation
\[
\De w-\tau w=0
\]
on~$B$, it is then a straightforward consequence of
Lemma~\ref{L.consts} that
\begin{equation}\label{wlmHs}
\|w\|_{H^s(B'')}\leq C_s\langle\tau\rangle^{s/2} \|w\|_B\leq C_s\langle\tau\rangle^{s/2}\langle\tau_-\rangle^{\frac{n-3}4}\|F\|_Y
\end{equation}
for any nonnegative~$s$. For convenience, let us fix a ball~$B''$ as
above and denote its radius by~$R''$.

Consider an orthonormal basis of spherical harmonics on the unit
sphere~$\SS^{n-1}$ of dimension $(n-1)$, which we denote by
$Y_{lm}$. Hence $Y_{lm}$ is an eigenfunction of the spherical
Laplacian satisfying
\begin{equation}\label{eigenv}
\De_{\SS^{n-1}} Y_{lm}= -l(l+n-2)\, Y_{lm}\,,
\end{equation}
$l$ is a
nonnegative integer and $m$ ranges from~1 to the multiplicity
\[
d_l := \frac{2l+n-2}{l+n-2} \binom{l+n-2}{l}
\]
of the corresponding eigenvalue of the spherical Laplacian.

The expansion of~$w$ in spherical harmonics,
\[
w(x)= \sum_{l=0}^\infty \sum_{m=1}^{d_l} w_{lm}(|x|)\,
Y_{lm}\Big( \frac x{|x|} \Big) \,,
\]
converges in any $ H^s(B'')$ norm, with the coefficients being given by 
\[
w_{lm}(r):=\int_{\SS^{n-1}} w(r\om) \,
Y_{lm}(\om)\, d\si(\om)\,.
\]
Using the notation
\[
\|w_{lm}\|^2:= \int_0^{R''} |w_{lm}(r)|^2\, r^{n-1}\, dr\,,
\]
it is clear from~\eqref{wlmHs} and the expression for the spherical
eigenvalues (Equation~\eqref{eigenv}) that
\[
\sum_{l=0}^\infty\sum_{m=1}^{d_l} \L^{4s} \|w_{lm}\|^2\leq C \|w\|_{H^s(B'')}^2\leq C_s\langle\tau\rangle^{s}\langle\tau_-\rangle^{\frac{n-3}2}\|F\|_Y^2\,.
\]
Hence, for any~$s>1$ there is a positive constant~$C_s$
such that
\[
  \sum_{m=1}^{d_l} \|w_{lm}\|^2\leq C_s \L^{-4s}\langle\tau\rangle^{s}\langle\tau_-\rangle^{\frac{n-3}2}\|F\|_Y^2\,,
\]
which ensures that
\begin{align*}
\sum_{l=l_0}^\infty\sum_{m=1}^{d_l} \|w_{lm}\|^2 &\leq C_s
\langle\tau\rangle^{s} \langle\tau_-\rangle^{\frac{n-3}2}\|F\|_Y^2
                                                   \sum_{l=l_0}^\infty\L^{-4s}\\
  &\leq C_s
\langle l_0\rangle^{1-4s}\langle\tau\rangle^{s}
    \langle\tau_-\rangle^{\frac{n-3}2}\|F\|_Y^2\\
  &\leq C_s \|\vp\|_D^2
\langle l_0\rangle^{1-4s}\langle\tau\rangle^{s} \exp\frac{C \langle\tau\rangle^{\frac12}e^{C\sq}}\ep\,.
\end{align*}
Here we have employed the bound~\eqref{FY} for~$\|F\|_Y$. It then follows that, for any $\ep>0$, there are large
constants $c$ and~$s$, depending on~$\ep$ but not on~$\tau$,
such that if one sets
\begin{equation}\label{defl0}
l_0:= \exp\frac{c \langle\tau\rangle^{\frac12}e^{C\sq}}\ep
\end{equation}
and
\begin{equation}\label{defpsi}
\psi(x):=\sum_{l=0}^{l_0} \sum_{m=1}^{d_l} w_{lm}(|x|)\,
Y_{lm}\Big( \frac x{|x|} \Big) 
\end{equation}
one has
\begin{equation}\label{corte}
\|w-\psi\|_D^2=\sum_{l=l_0+1}^\infty\sum_{m=1}^{d_l} \|w_{lm}\|^2\leq \ep^2  \|\vp\|_D^2\,.
\end{equation}

Let us now pass to the study of the function~$w_{lm}$. As $w_{lm}$ satisfies the ODE
\begin{equation*}
\bigg(\pd_{rr} +\frac{n-1}r\pd_r -\frac{l(l+n-2)}{r^2} -\tau\bigg) w_{lm}(r)=0
\end{equation*}
and is bounded at $r=0$, one can write it in terms of a modified Bessel
function of the first kind as
\begin{equation}\label{hu}
w_{lm}(r)= A_{lm}\, r^{1-\frac n2} I_{l+\frac n2-1}(r\sqrt\tau)\,,
\end{equation}
where $A_{lm}$ is a $\tau$-dependent constant.

We need to obtain an estimate for the coefficient~$A_{lm}$. It is clear that
\[
A_{lm}= \frac{\int_0^{R''} r^{1-\frac n2}\, \overline{I_{l+\frac n2-1}(r\sqrt\tau)}\,
  w_{lm}(r)\, r^{n-1}\, dr}{\int_0^{R''} r\, |I_{l+\frac
    n2-1}(r\sqrt\tau)|^2\, dr}\,.
\]
If ${R''}$ denotes the radius of the ball~$B''$, the key point is then to
estimate the function
\begin{equation}\label{defcI}
\cI_\nu(\al):= \int_0^{R''} r\, |I_{\nu}(r\al )|^2\, dr\,,
\end{equation}
where we can assume that $\nu\geq1/2$ and $\al \in\RR^+\cup
i\RR^+$. Suitable upper and lower bounds have been computed in
Lemma~\ref{L.Bessel} in Appendix~\ref{A.Bessel}, showing in particular
that
\[
\cI_\nu(\al) \geq \frac C{\langle\al\rangle^2 \nu^2}\Big(\frac
{C\min\{|\al|,1\}}\nu\Big)^{2\nu} e^{C \Real\al}
\]
for all $\al\in \RR^+\cup i\RR^+$. This lower bound translates into the upper bound
\[
|A_{lm}|\leq \frac{\|w_{lm}\|}{\cI_{l+\frac
    n2-1}(\sqrt\tau)^{1/2}}\leq
{\|w_{lm}\|}{\langle\tau\rangle^{1/2} \langle l\rangle}e^{-C \sqP} \Big(\frac{l+\frac
    n2-1} {C\min\{\sqa,1\}}\Big)^{l+\frac
    n2-1}\,.
\]

The large time asymptotics of Bessel functions~\cite[8.451.1 and
8.451.5]{GR},
\begin{align*}
I_{l+\frac
  n2-1}(r\sqrt\tau)=\begin{cases}
\dfrac{\sqrt 2\cos(r\sqa - (2l+  n-1)\frac \pi4)+ O_l((\sqrt\tau r)^{-1})}{(\pi\sqa
  r)^{1/2}} & \text{if } \tau<0\,,\\
(2\pi)^{-1/2}\dfrac{e^{r\sqrt\tau}}{(\sqrt\tau
  r)^{1/2}} [1+ O_l((|\tau|^{1/2} r)^{-1})]& \text{if } \tau>0\,,
\end{cases}
\end{align*}
shows that
\[
\triple \psi\leq C\bigg(\sum_{l=0}^{l_0}\sum_{m=1}^{d_l} |A_{lm}|^2\bigg)^{1/2}\,.
\]
Note that the error terms $O_l((\sqp r)^{-1})$ are not bounded
uniformly in~$l$, but this does not have any effect on our argument.

To estimate $\psi$ pointwise, we can resort to the uniform estimate~\cite{Balodis}
\[
|J_\nu(z)|\leq C f_\nu(|\Real z|) \, e^{|\Imag z|}\,,
\]
where $C$ does not depend on~$\nu\geq1$ and
\[
  f_\nu(s):=\begin{cases}
    (\nu-s)^{-1} & \text{if } 0\leq s\leq \nu-\nu^{1/3}\,,\\
    \nu^{-1/3} & \text{if } \nu-\nu^{1/3}\leq s\leq \nu+\nu^{1/3}\,,\\
s^{-1/2}(1-\nu/s)^{-1/4} & \text{if } \nu+\nu^{1/3}\leq s\,.
  \end{cases}
\]
This bound, which obviously applies to the Bessel
function~$I_\nu(z)$ too because of the relation
\begin{equation}\label{JI}
J_\nu(iz)= i^\nu \, I_\nu(z)\,,
\end{equation}
readily shows
that~$\psi$ also satisfies the pointwise bound
\[
\triplei \psi \leq Cl_0^C\bigg(\sum_{l=0}^{l_0}\sum_{m=1}^{d_l} |A_{lm}|^2\bigg)^{1/2}\,.
\]
Here we have used the well-known bound
\[
\|Y_{lm}\|_{L^\infty(\SS^{n-1})}\leq C {d_l^{1/2}}\,.
\]
Putting together the bounds for $A_{lm}$, $\|w_{lm}\|$, $l_0$ and~$\|F\|_Y$, this
results in the bound provided in the first item of
the statement.\smallskip

\step{Step 3: The interior approximation result} We are left with the task of proving the quantitative approximation
result with sharper bounds on the smaller domain~$D'$, which
corresponds to the second assertion of the statement.

The proof goes as before with minor modifications. To derive an analog
of Step~1, all the
objects are defined with~$D'$ playing the role of~$D$; e.g., one sets
\[
\cX:=\{ \phi \in L^2(D'): \De\phi-\tau \phi=0\}\,.
\]
All the arguments remain valid without any further modifications until
one reaches the estimate~\eqref{estimw}. There one can now employ the
interior estimate in Lemma~\ref{L.3sphere}, which results in
\begin{align*}
\|w\|_{B'\backslash D}\leq C e^{C\sq} \|w\|_{B'\backslash D'}^{1-\te} \|w\|_Y^\te
\end{align*}
for some $\te\in(0,1)$. Since $\|w\|_Y\leq \al \|E\|_{D'}$, one can argue as before to obtain
\[
\|w\|_{B'\backslash D}\leq C \al^\te e^{C\sq}\|E\|_{D'}\,.
\]
This easily leads to the estimate
\begin{align*}
\|w-\vp\|_{D'}&\leq \ep \|\vp\|_{D'}^{\frac12}\|\vp\|_{H^1(D')}^{\frac12}\,,\\
\|F\|_Y&\leq C\bigg(\frac{\langle\tau\rangle}\ep\bigg)^{C}e^{C\sq}\|\vp\|_{D'}\,.
\end{align*}
Using the same reasoning as before with
\[
l_0:= C\bigg(\frac{\langle\tau\rangle}\ep\bigg)^{C}e^{C\sq}
\]
instead of~\eqref{defl0}, one arrives at the desired result.\smallskip

\step{Step~4: The case of small $|\tau|$} In the case of harmonic
functions ($\tau=0$), a minor variation of the proof shows that a
harmonic function on~$D$ can be approximated on this domain by a
harmonic polynomial of degree at most $ e^{C/\ep}$ whose
coefficients are bounded in absolute value by $e^{e^{C/\ep}}$. In
the approximation is performed on a smaller domain~$D'$, the
degree of the polynomial is bounded by $C\ep^{-C}$ and its coefficients are bounded by
$C\ep^{-\ep^{-C}}$. This translates into the bounds specified in the
statement. Likewise, in the case of small~$|\tau|$, one can reuse the
bounds for Bessel functions employed in the previous steps of
the proof to show that the bounds of the points~(iii) and~(iv) hold
uniformly in this case.
\end{proof}

\section{A lemma on non-quantitative approximation by solutions of the
  Schr\"odinger equation on a spacetime cylinder}
\label{S.lemma}

In this section we prove a lemma on approximation for the
Schr\"odinger equation that will be instrumental in our proof of Theorem~\ref{T.GAT}.

Before presenting this result, let us introduce some notation. If $L^2_c(\RR^{n+1})$ denotes the space of
$L^2$~functions on the spacetime with compact support, we can define
the operators
$\cT$ and $\cT^*$ as the maps $L^2_c(\RR^{n+1})\to L^\infty
L^2(\RR^{n+1})$ given by
\begin{align}\label{defT}
  \cT f(x,t)&:= \int_{-\infty}^t \int_{\RR^n} G(x-y,t-s)\, f(y,s)\, dy\, ds\,,\\
  \cT^*h(x,t)&:= \int_t^{\infty} \int_{\RR^n} G(x-y,t-s)\, h(y,s)\, dy\, ds\,,\label{defT*}
\end{align}
where
\[
  G(x,t):= (4\pi i t)^{-n/2} e^{i|x|^2/(4t)}
\]
is the fundamental solution of the Schr\"odinger equation. To put it differently,
\[
\cT f(\cdot, t)= \int_{-\infty}^t e^{i(t-s)\De} f(\cdot, s)\, ds\,,\qquad \cT^*h(\cdot, t)= \int_t^{\infty} e^{i(t-s)\De} h(\cdot, s)\, ds\,.
\]

Of course, the action of $\cT$ and $\cT^*$ is well defined
on more general distributions, which in this paper will always be of compact support. In the following proposition we recall some elementary properties of~$\cT$ and~$\cT^*$:

\begin{proposition}\label{P.TT*}
Given any $f,h\in L^2_c(\RR^{n+1})$, the following statements
hold:
\begin{enumerate}
\item For any  $t_0$ below the support of~$f$, $\cT f$ is the solution
  to the Cauchy problem
\begin{equation*}
(i\pd_t+\De) \cT f=if\,,\qquad \cT f|_{t=t_0}=0\,.
\end{equation*}

\item For any $t_0$ above the support of~$h$, $\cT^*h$ is the solution
  to the Cauchy problem
\begin{equation*}
(i\pd_t+\De) \cT^*h=-ih\,,\qquad \cT^*h|_{t=t_0}=0\,.
\end{equation*}

\item $\displaystyle \int_{\RR^{n+1}} \cT f(x,t)\, \overline{h(x,t)}\,
  dx\, dt=\int_{\RR^{n+1}} f(x,t)\, \overline{\cT^*h(x,t)}\,
  dx\, dt$.
  
\item $\cT$ and $\cT^*$ commute with spacetime derivatives. That is, if
$f,g\in C^\infty_c(\RR^{n+1})$,
\[
D^\al(\cT f)= \cT (D^\al f)\quad \text{and}\quad D^\al(\cT^*g)= \cT^*(D^\al g)
\]
for any multiindex~$\al$.
\end{enumerate}
\end{proposition}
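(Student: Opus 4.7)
The plan is to deduce all four statements directly from the fact that $G(x,t)$ is the fundamental solution of the Schr\"odinger equation, together with Fubini's theorem and integration by parts. Two structural properties of the explicit kernel will be used throughout: firstly, $(i\pd_t+\De)G=0$ pointwise for $t\neq 0$ and $G(\cdot,\tau)\to \delta_0$ in the distributional sense as $\tau\to 0^\pm$; secondly, the symmetries $G(-x,t)=G(x,t)$ and $\overline{G(x,t)}=G(x,-t)$, both immediate from the explicit formula.

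For (i), I would apply the Leibniz rule for differentiation under a variable upper limit, obtaining
\[
\pd_t\cT f(x,t)=\lim_{s\to t^-}\int G(x-y,t-s)f(y,s)\,dy+\int_{-\infty}^t\!\int \pd_t G(x-y,t-s)f(y,s)\,dy\,ds.
\]
The boundary term equals $f(x,t)$ by the distributional limit $G(\cdot,0^+)=\delta_0$, and multiplying by $i$ and adding $\De\cT f$ kills the interior integrand thanks to $(i\pd_t+\De)G=0$ for $t-s>0$. Thus $(i\pd_t+\De)\cT f=if$, and the initial condition at any $t_0$ below the support of $f$ is immediate since the integration range is then empty. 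Statement (ii) follows by the same argument with the obvious sign change coming from the reversed limits, using that $G(\cdot,0^-)=\delta_0$ as well.

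For (iii), I would exchange the order of integration. Substituting the definition of $\cT f$ and using Fubini, the condition $s<t$ on the inner integral becomes $t>s$ on the outer one, so
\[
\int_{\RR^{n+1}}\cT f\cdot\overline h\,dx\,dt=\int_{\RR^{n+1}}f(y,s)\bigg(\int_s^{\infty}\!\int G(x-y,t-s)\overline{h(x,t)}\,dx\,dt\bigg)dy\,ds.
\]
The inner bracket equals $\overline{\cT^*h(y,s)}$ by the identity $G(x-y,t-s)=\overline{G(y-x,s-t)}$, which is an immediate consequence of the two symmetries of $G$ listed above.

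For (iv), spatial derivatives pass through without difficulty: $\pd_{x_j}G(x-y,t-s)=-\pd_{y_j}G(x-y,t-s)$, so one integration by parts in $y$ transfers the derivative from the kernel onto $f$, with no boundary contributions since $f\in C^\infty_c$. Time derivatives are slightly more delicate because of the variable endpoint: differentiating $\cT f$ in $t$ produces a boundary contribution $f(x,t)$ from $G(\cdot,0^+)=\delta_0$, but the same contribution appears in $\cT(\pd_t f)$ after integrating by parts in $s$ against $\pd_s G(x-y,t-s)=-\pd_t G(x-y,t-s)$, so the two match and the interior integrals coincide. The analogous argument handles $\cT^*$. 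The only mild obstacle is consistent bookkeeping of these boundary terms at $s=t$, which is uniformly handled by the distributional limits $G(\cdot,0^\pm)=\delta_0$.
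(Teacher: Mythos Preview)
Your proof is correct. For parts (i)--(iii) your explicit computations are precisely the ``straightforward computation'' the paper alludes to without writing out, so there is no meaningful difference there.

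For part (iv) you and the paper take genuinely different routes. The paper argues by differentiating the Cauchy problem established in (i): since $(i\pd_t+\De)\cT f=if$ and $\cT f|_{t=t_0}=0$, applying $D^\al$ to both gives that $D^\al(\cT f)$ solves the same inhomogeneous Cauchy problem as $\cT(D^\al f)$, so they agree by uniqueness. Your approach instead verifies the identity directly at the level of the kernel, integrating by parts and tracking the boundary terms at $s=t$. The paper's argument is shorter and sidesteps the bookkeeping you flagged, at the cost of invoking uniqueness for the inhomogeneous Schr\"odinger Cauchy problem; your argument is more hands-on and entirely self-contained. Both are perfectly valid.
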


\begin{proof}
 The first three statements follow by a straightforward computation. In order to see
that $D^\al (\cT f)= \cT (D^\al f)$, the easiest way is to differentiate the
equation satisfied by $\cT f$. This way we arrive at
\[
(i\pd_t+\De)D^\al (\cT f)= iD^\al f\,,\qquad D^\al(\cT f) |_{t=t_0}=0
\]
for any $t_0$ below the support of~$f$, which means that indeed $D^\al \cT f= \cT (D^\al
f)$. The proof that $D^\al(\cT^*g)= \cT^*(D^\al g)$ is analogous.
\end{proof}

To state the approximation result, it is also convenient to define
the notion of {\em admissible set}\/:

\begin{definition}\label{D.admissible}
  Let $\Om\subset\RR^{n+1}$ be an open set in spacetime whose closure is contained in the spacetime cylinder $\cC:=B_R\times
\RR$. We say that the set~$S\subset\RR^{n+1}$ is $(\Om,R)$-{\em
  admissible} if:
\begin{enumerate}
\item $S$ is a spacetime cylinder of the form $B\times (T_1,T_2)$ contained in~$\RR^{n+1}\backslash\overline\cC$, where $B\subset\RR^n$ is a ball.

\item The projection of~$S$ on the time axis contains that of~$\Om$,
  that is,
\begin{equation*}
\{t: (x,t)\in \Om \text{ for some } x\in\RR^n\}\ssubset \{t': (x',t')\in S \text{ for some } x'\in\RR^n\}\,.
\end{equation*}
\end{enumerate}
\end{definition}

We are now ready to state and prove the main result of this section:

\begin{lemma}\label{L.nonquant}
  Let $\Om\ssubset B_R\times \RR$ be a bounded domain in spacetime
  with smooth boundary such that the complement $\RR^n\backslash \Om_t$
  is connected for all~$t$. Fix any $(\Om,R)$-admissible set
  $S\subset\RR^{n+1}$ and a smaller domain $\Om'\ssubset\Om$. For some real~$s$, consider a function
  $v\in L^2H^s(\Om)$ that satisfies the Schr\"odinger equation
\[
i\pd_t v+ \De v=0
\]
in the distributional sense on~$\Om$. Then, for any $\ep>0$, there is
a function $f\in C^\infty_c(S)$ such that $\cT f$
approximates~$v$ in~$\Om'$ as
\[
\|v-\cT f\|_{L^2H^s(\Om')}<\ep\,.
\]
\end{lemma}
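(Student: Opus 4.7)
I will argue by Hahn--Banach duality combined with a unique continuation statement for the Schr\"odinger operator. Since $(L^2H^s(\Om'))^*= L^2H^{-s}(\Om')$, it suffices to show that every $h$ in this dual space---extended by zero outside $\Om'$ to a compactly supported distribution on $\RR^{n+1}$---which annihilates $\cT f|_{\Om'}$ for all $f\in C^\infty_c(S)$ must annihilate $v|_{\Om'}$. Set $w:=\cT^*h$. Proposition~\ref{P.TT*}(iii) identifies the annihilation hypothesis with $w\equiv 0$ on $S$, and Proposition~\ref{P.TT*}(ii) shows that $w$ solves $(i\pd_t+\De)w=-ih$ on $\RR^{n+1}$ and vanishes identically for $t$ above the time projection of $\overline\Om$.

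The core of the proof is to upgrade this vanishing from $S$ to the entire spacetime complement $\RR^{n+1}\setminus\overline\Om$. Since $\overline{\Om'}\subset\Om$, $w$ satisfies the source-free Schr\"odinger equation on this complement, which is open and connected because every slice $\RR^n\setminus\Om_t$ is connected by hypothesis and $\overline\Om$ is compact in spacetime. For the constant-coefficient operator $i\pd_t+\De$ the characteristic codirections are exactly the purely temporal ones, so every $C^1$ hypersurface with nonzero spatial gradient is non-characteristic and Holmgren's uniqueness theorem furnishes a local extension of the vanishing of $w$ across it. I iterate these extensions in two stages: first, for each fixed time $t\in(T_1,T_2)$ I propagate $w(\cdot,t)\equiv 0$ from the ball~$B$ to the entire connected set $\RR^n\setminus\Om_t$ using Holmgren with spatial hyperplanes; second, for $t<T_1$ the function $w$ coincides with the free Schr\"odinger evolution of $w(\cdot,T_1)=0$, so $w\equiv 0$ there by backwards uniqueness. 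Combined with $w\equiv 0$ for $t$ above the time projection of $\overline\Om$, we conclude that $w\equiv 0$ on $\RR^{n+1}\setminus\overline\Om$, i.e.\ $w$ is distributionally supported in~$\overline\Om$.

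To close the argument I take the complex conjugate of $(i\pd_t+\De)w=-ih$ to obtain $\overline h=-\pd_t\overline w-i\De\overline w$, pair with $v$ over~$\Om$, and integrate by parts. The boundary terms on $\pd\Om$ vanish because $w$ and its spatial derivatives vanish on the open complement, and the time boundary terms vanish because $w$ is compactly supported in time; this yields $\int_\Om v\,\overline h\,dx\,dt=\int_\Om(\pd_t v-i\De v)\,\overline w\,dx\,dt$, and the integrand vanishes identically since $(i\pd_t+\De)v=0$ on~$\Om$. Thus $\int_\Om v\,\overline h=0$ and Hahn--Banach concludes the proof.

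The main obstacle is the global unique continuation step: because the level sets $\{t=\mathrm{const}\}$ are characteristic for the Schr\"odinger operator, and indeed quantitative Tychonov-type counterexamples recalled in the introduction preclude any naive propagation of vanishing across temporal hypersurfaces, we must rely exclusively on spatial non-characteristic hypersurfaces. It is precisely the topological hypothesis that every slice $\RR^n\setminus\Om_t$ is connected that permits the concatenation of local Holmgren extensions into the global conclusion. A secondary, purely technical issue is the low regularity of $h\in L^2H^{-s}$, which is handled by a standard mollification in spacetime before the integration by parts and a density argument at the end.
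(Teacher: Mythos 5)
Your proposal correctly identifies the overall framework (Hahn--Banach duality, the operator $\cT^*$, and spatial unique continuation using the connectedness hypothesis on $\RR^n\setminus\Om_t$), and your observation that temporal propagation must be avoided because time slices are characteristic is exactly the right instinct. However, the way you close the duality argument diverges from the paper's and introduces genuine gaps.

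The paper does not pair $v$ with the annihilating functional directly. It first extends $v$ by a cutoff, mollifies it to obtain a smooth, compactly supported $v^\eta$ that is close to $v$ on $\Om'$ and still solves the Schr\"odinger equation near $\overline{\Om'}$, and then writes $v^\eta=\cT f$ where $f:=\pd_t v^\eta-i\De v^\eta$ is smooth and supported \emph{away} from a neighborhood of $\Om'$. With $\phi$ the annihilating functional, the clean identity $\langle\phi,v^\eta\rangle=\langle\phi,\cT f\rangle=\langle\cT^*\phi,f\rangle$ (Proposition~\ref{P.TT*}(iii)) then reduces the problem to showing $\cT^*\phi=0$ on $\supp f$ --- which follows from the horizontal unique continuation statement, entirely within the time range of $S$. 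There are no boundary terms to worry about; the mollification of $v$ is also precisely what makes the argument work for \emph{any} real $s$, including negative.

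Your route instead integrates $v\,\overline{h}$ by parts over $\Om$ and asserts that the boundary terms vanish ``because $w$ and its spatial derivatives vanish on the open complement.'' This is the gap: you only establish $w=\cT^*h\equiv0$ on $\RR^{n+1}\setminus\overline\Om$, which says $\supp w\subset\overline\Om$ but does not make the \emph{inner} traces of $w$ and $\nabla w$ on $\pd\Om$ vanish. To kill the boundary terms you would need $w\equiv0$ on a full two-sided neighborhood of $\pd\Om$, i.e.\ on the annulus $\Om\setminus\overline{\Om'}$; this requires a further application of unique continuation from outside $\overline\Om$ into the annulus, and that in turn raises a question about whether $(\RR^{n+1}\setminus\overline{\Om'})$ (or its slices) is connected --- a hypothesis the lemma does not grant you. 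Moreover, your promised mollification does not close this gap: mollifying $h$ (or $w$) spreads their supports across $\pd\Om$ rather than making inner traces vanish, and it leaves untouched the fact that $v\in L^2H^s(\Om)$ with $s$ possibly negative has no well-defined boundary traces with which to integrate by parts. Finally, the backwards-uniqueness stage you add for $t<T_1$ is unnecessary once one follows the paper's route, since only $\cT^*\phi=0$ on $\supp f$ is required, and $\supp f$ sits inside the time interval covered by $S$.
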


\begin{remark}
A cursory look at the proof reveals that the proposition remains valid for
solutions~$v$ in many other functional spaces, e.g.~in $H^s(\Om)$ or $C^k(\Om)$.
\end{remark}

\begin{proof}
 Since the bounded
  spacetime set $\Om \subset \RR^{n+1}$ has a smooth boundary, we can
  extend~$v|_\Om$ to a compactly supported function on~$\RR^{n+1}$ of
  class $L^2H^s(\RR^{n+1})$ by multiplying by a smooth cutoff
  function. We assume that this cutoff function is~1 in a small
  neighborhood~$U$ of the closure of~$\Om'$ and vanishes on the
  complement of~$\Om$. We will still denote this extension by~$v$.

Let us consider an approximation of the identity
$F_\eta(x,t):= \eta^{-n-1} F(x/\eta,t/\eta)$ defined by a function
$F\in C^\infty_c(\RR^{n+1})$ such that its support is contained in the
unit ball~$B_1$ and $\int_{\RR^{n+1}} F(x,t)\, dx\, dt=1$. Setting
$v^\eta:= F_\eta * v$, since $i\pd_t v+ \De v=0$ outside $\Om\backslash
U$ and $\supp F_\eta\subset B_\eta$, it follows that
\begin{align*}
(i\pd_t +\De ) v^\eta(x,t)&= \int_{\RR^{n+1}} F_\eta(y,s)\, (i\pd_t+\De)
                           v(x-y,t-s)\, dy\, ds=0
\end{align*}
whenever the distance from the point $(x,t)$ to the set $\Om\backslash
U$ is greater than~$\eta$. 

In particular, for small enough~$\eta$, $v^\eta$ is a smooth function,
whose support is contained in a neighborhood of~$\Om$ of width~$\eta$,
which satisfies the equation
\[i\pd_t v^\eta+\De v^\eta=0
\]
in an open neighborhood $\Om''\ssupset \Om' $ and whose
difference
\begin{equation}\label{vvep}
\|v-v^\eta\|_{L^2H^s(\RR^{n+1})}
\end{equation}
is as small as one wishes.

As $v^\eta(x,t_0)=0$ for all~$x$ and any negative enough
negative time~$t_0$, by defining the smooth, compactly supported function
\[
 f:=\pd_tv^\eta -i\De v^\eta \,,
\]
Proposition~\ref{P.TT*} ensures that one can write
\begin{equation}\label{v1}
v^\eta =\cT f\,,.
\end{equation}
Note that, by definition,
\[
  S':=\supp f
\]
is contained in a set of the form $\Om^\eta\backslash
\overline{\Om''}$, where~$\Om^\eta$ is the support of~$v^\eta$ (and is
therefore contained in an $\eta$-neighborhood of~$\Om$). Note that
\[
\dist (S',\Om' ):=\inf\{|X-Y|:X\in S',\; Y\in \Om' \}>0\,.
\]

We claim that, given any~$\ep>0$ and any positive integer~$k$, there
is a function $g$ in $C^\infty_c(S)$ such that
the function $\cT g$
approximates~$v^\eta$ on the set~$\Om' $ in the $L^2H^k$~norm
(cf.~Equation~\eqref{L2Hs}) as
\begin{equation}\label{v1v2}
\|v^\eta-\cT g\|_{L^2H^k(\Om' )}<\ep.
\end{equation}

To prove this, consider the space of smooth functions
\[
\cV:=\{ \cT g|_{\Om'} : g\in C^\infty_c(S)\}\,,
\]
which we consider as a subset of the Banach space $L^2H^k({\Om'}
)$. Since
\[
(i\pd_t +\De) \cT g= ig
\]
in~$\RR^{n+1}$ by Proposition~\ref{P.TT*}, in particular $(i\pd_t +\De) \cT g=0$ in the cylinder~$B_R\times\RR$. Now 
let $\phi$ be a distribution in the dual
space
\[
[L^2H^k({\Om'} )]'= L^2H^{-k}_0({\Om'} )\,,
\]
where the subscript refers to the vanishing trace conditions, such that
\begin{equation}\label{angle1}
0=\langle \phi, u\rangle_{\Om'} 
\end{equation}
for all $u\in \cV$. Here the angle bracket
$\langle\cdot,\cdot\rangle_{\Om'} $ denotes the duality coupling (which
we assume to be conjugate linear in the first entry). Notice
that, as $C^\infty({\Om'} )\subset L^2H^k({\Om'} )$, any element~$\phi$ of the
dual space $L^2H^{-k}_0({\Om'} )$ is a distribution on~$\RR^{n+1}$ whose
support is contained in~${\Om'} $. The duality bracket is then given simply
by the usual coupling $\langle\cdot,\cdot \rangle$ between a
distribution of compact support and a smooth enough function.

By the definition of~$\phi$ and Proposition~\ref{P.TT*}, it then follows that, for all $g\in C^\infty_c(S)$,
\begin{align*}
0= \langle \phi, \cT g\rangle= \langle \cT^*\phi, g\rangle\,,
\end{align*}
which implies that $\cT^*\phi\equiv 0$ in the interior of~$S$, where
(with some abuse of notation)
$\cT^*\phi$ denotes the distribution
\[
\cT^*\phi(\cdot,t):=\int_t^\infty e^{i(t-s)\De} \phi(\cdot,s)\, ds\,.
\]
Then $\cT^*\phi$ satisfies the Schr\"odinger equation
\[
(i\pd_t +\De) \cT^*\phi= -i\phi\,,
\]
and in particular
\[
(i\pd_t +\De) \cT^*\phi= 0
\]
in the complement of the set~${\Om'} $.

Let us now recall the unique
continuation property of the Schr\"odinger equation (see e.g.~\cite{Tataru}):
\begin{theorem}[Unique continuation property] \label{T.UCP}
  Let~$W\subset\RR^{n+1}$ be a connected open set in spacetime.  If a function satisfies the equation $i\pd_t h+\De h=0$ in~$W$ and $h=0$ in some nonempty open set~$V$ contained in~$W$,
  then $h(x,t)=0$ for all $(x,t)\in W$ such that the intersection
  $V_t\cap W$ is nonempty.
\end{theorem}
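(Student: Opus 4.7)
The plan is to reduce this unique continuation statement to a well-known result in the PDE literature, since the strong UCP for the free Schr\"odinger equation is a deep theorem due to Tataru (the reference cited in the text). Accordingly, my strategy is to frame the stated property as a direct consequence of his theorem and only indicate the key ideas, rather than rederiving the underlying Carleman estimates.

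First, I would reduce matters to a slicewise statement. Fix a time $t_0$ in the time projection of $V$ and a point $(x_0, t_0) \in W$; the goal is to show $h(x_0, t_0)=0$. Since $h$ vanishes identically on the open spacetime set $V$, all its partial derivatives also vanish on $V$; in particular, at any point $(y_0, t_0) \in V$, iterating the PDE $\partial_t h = i\Delta h$ gives $\Delta^k h(y_0, t_0) = 0$ and $\partial_t^k h(y_0, t_0) = 0$ for every $k \geq 0$. This encodes a strong pointwise vanishing of $h(\cdot, t_0)$, together with infinitely many of its spatial derivatives, at every point of $V_{t_0}$. Because $h$ is a distributional solution of the Schr\"odinger equation on~$W$, it is smooth in $x$ at each fixed time (and, after a mollification in $t$, in $t$ as well), so the manipulations above are justified.

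The hard step is to propagate this slicewise vanishing from the open set $V_{t_0}$ to the entire slice $W_{t_0}$ (really, to the component of $W_{t_0}$ reachable from $V_{t_0}$ inside the connected set $W$, which is what the hypothesis on $V_t \cap W$ encodes). This requires a Carleman estimate \emph{adapted} to the Schr\"odinger operator $i\partial_t + \Delta$, rather than a classical elliptic or parabolic Carleman estimate: the time derivative enters with a purely imaginary coefficient and the operator is neither elliptic nor hypoelliptic. Tataru's theorem provides exactly such an estimate, with weights designed to respect the characteristic geometry of $i\partial_t + \Delta$, and it yields the desired slicewise propagation of the zero set. Once this is in hand, a standard chain-of-balls argument along a path in $W_{t_0}$ connecting a point of $V_{t_0}$ to $(x_0,t_0)$ completes the proof.

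The main obstacle, and the reason the result is nontrivial, is precisely this Carleman step. Classical Holmgren-type arguments fail because the initial surface $\{t = t_0\}$ is characteristic for the Schr\"odinger operator; explicit Tychonoff-type counterexamples, analogous to the heat-equation examples already displayed earlier in the paper, show that naive propagation through a time slice is impossible. Accepting Tataru's Carleman estimate as input, the remaining geometric and connectedness arguments are routine.
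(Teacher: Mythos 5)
The paper does not actually prove this theorem: it is recalled verbatim from Tataru's lecture notes \cite{Tataru}, which is precisely the source you invoke, so your proposal of reducing the statement to Tataru's Carleman estimate for Schr\"odinger operators coincides with the paper's treatment. A few of your expository remarks deserve tightening, though. First, the claim that ``classical Holmgren-type arguments fail'' is not quite right for the free Schr\"odinger equation: $i\partial_t+\Delta$ has constant (hence analytic) coefficients, the only characteristic hypersurfaces are the time slices $\{t=\mathrm{const}\}$, and the horizontal propagation asserted by the theorem never crosses one of these; so Holmgren--John already gives the result in the constant-coefficient case. Tataru's Carleman estimate is quoted because it also handles variable-coefficient Schr\"odinger-type operators, which is what the paper explicitly notes in the paragraph that follows Theorem~\ref{T.UCP}. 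Second, the Tychonov-type counterexample you mention obstructs vertical propagation across a characteristic time slice; it explains why the conclusion of the theorem is restricted to times $t$ with $V_t$ nonempty, not why the horizontal propagation within such a slice is nontrivial. Third, your ``slicewise reduction'' (observing that all spatial and temporal derivatives of $h$ vanish on $V$, so $\Delta^k h$ and $\partial_t^k h$ vanish there) is a correct observation but is not actually used in the argument you sketch: the Carleman estimate is a genuine spacetime estimate and the propagation takes place in a spacetime tube, not slice by slice. Since you ultimately defer entirely to Tataru, these points do not break the logic, but as written they misrepresent where the difficulty lies.
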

It then follows from the fact that
$\cT^*\phi\equiv 0$ on~$S$ and the connectedness of $\Om_t\co$ for all~$t$ that $\cT^*\phi(x,t)=0$ for all~$t$ such that
\[
S\cap (\RR^n\times\{t\})\neq \emptyset
\]
and all $x \in \RR^n$ such that $(x,t)\not\in {\Om'} $. By the time
projection condition that appears in Definition~\ref{D.admissible} and
the fact that $S'\subset \RR^{n+1}\backslash \Om'$, this implies that $\cT^*\phi=0$ on~$S'$, which
in turn means that
\[
0= \langle \cT^*\phi, f\rangle= \langle \phi, v^\eta\rangle\,.
\]
As a consequence of the Hahn--Banach theorem, $v^\eta$ can then be
approximated by an element $u\in\cV$ as in~\eqref{v1v2}, as we
wanted to conclude.

There is a slight technical point about the application of the unique
continuation theorem to $\cT^*\phi$ that we have
omitted. In~\cite{Tataru} this result is proved, for more general Schr\"odinger-type
equations, under the assumption that $\cT^*\phi\in L^2\loc$, which
does not hold in our case because $\phi$ is only in
$L^2H^{-k}_0$. However, this regularity assumption is not essential in
the case of the usual Schr\"odinger equation, as one can apply the
argument to the mollified function
\[
F_{\eta'}* (\cT^*\phi)\,,
\] 
with a small enough parameter~$\eta'$ and $F_{\eta'}$ as in
the beginning of the proof. The details are just as above. Lemma~\ref{L.nonquant}
then follows.
\end{proof}

\section{Global approximation theorems for the Schr\"odinger
  equation}
\label{S.global}

The quantitative and non-quantitative global approximation theorems for
the Schr\"odinger equation that we stated in the Introduction are
proved in this section. We start with the quantitative result, which
will readily give the non-quantitative one when combined with Lemma~\ref{L.nonquant}:

\begin{proof}[Proof of Theorem~\ref{T.qGAT}]

Let us begin with the case of approximation in the whole cylinder
$\Om=D\times\RR$, which corresponds to the first part of the
statement. For the sake of clarity, we will divide the proof in
several steps.\smallskip

\step{Step 1: Approximation by a global solution of the
  Schr\"odinger equation that grows at spatial infinity}
As~$v\in L^2(\Om)$ satisfies the Schr\"odinger equation in~$\Om$, its
time Fourier transform $\hv(x, \tau)$ must satisfy the
Helmholtz--Yukawa equation
\begin{equation}\label{HY}
\De \hv(x,\tau)-\tau \hv(x,\tau)=0
\end{equation}
on~$D$. Theorem~\ref{T.Helmholtz} ensures that for every $\ep'>0$ there is a
function~$\hpsi(x,\tau)$ that satisfies the equation
\[
\De\hpsi(x,\tau)-\tau\hpsi(x,\tau)=0
\]
on~$\RR^n$ and approximates $\hv(x,\tau)$ as
\begin{equation}\label{approx}
\|\hpsi(\cdot,\tau) - \hv(\cdot,\tau)\|_{L^2(D)}\leq \ep'
\|\hv(\cdot,\tau)\|_{L^2(D)}^{1/2}\|\hv(\cdot,\tau)\|_{H^1(D)}^{1/2}\leq C\ep'\langle\tau\rangle^{1/4}\|\hv(\cdot,\tau)\|_{L^2(D)}\,.
\end{equation}
Furthermore, $\hpsi(x,\tau)$ is bounded as
\begin{equation}\label{Bound1}
\triplei{\hpsi(\cdot,\tau)}\leq
(N_{\ep',\tau})^{N_{\ep',\tau}}\|\hv(\cdot,\tau)\|_{L^2(D)}
\end{equation}
if $|\tau|>\tau_1$ (where $\tau_1$ is some fixed constant) and as
\begin{equation}\label{Bound2}
|\hpsi(x,\tau)|^2\leq  (N_{\ep',1} \langle x\rangle)^{N_{\ep',1}}e^{\sqP|x|}\|\hv(\cdot,\tau)\|_{L^2(D)}
\end{equation}
if $|\tau|\leq \tau_1$. The quantity $N_{\ep,\tau}$ was
defined in~\eqref{defNep} and we are taking
\begin{equation}\label{defK}
\ep':= \ep^K
\end{equation}
for some constant~$K>1+1/(2\si)$. Let us also recall from the proof of
Theorem~\ref{T.Helmholtz} that~$\hpsi(x,\tau)$ is of the form
\begin{equation}\label{formulahpsi}
\hpsi(x,\tau)= \sum_{l=0}^{l_\tau} \sum_{m=1}^{d_l} 
A_{lm}(\tau)\, r^{1-\frac n2} I_{l+\frac n2-1}(r\sqrt\tau)\, Y_{lm}\Big( \frac x{|x|} \Big) \,,
\end{equation}
where $Y_{lm}$ are normalized spherical harmonics,
and that one has effective bounds (depending on~$\tau$ and~$\ep$) for the constants $A_{lm}(\tau)$ and~$l_\tau$.

Equation~\eqref{HY}, the approximation estimate~\eqref{approx} and the
hypothesis 
\[
\int_{|\tau|>\tau_0}\int_D |\hv(x,\tau)|^2\, dx\, d\tau\leq
M^2\langle\tau_0\rangle^{-\si}
\]
imply that one can take 
\[
\tau_\ep:= c\ep^{-2/\si}\,,
\]
with $c$ a large constant independent of~$\ep$, so that the function
\[
v_1(x,t):= \int_{|\tau|<\tau_\ep} e^{i\tau t} \hpsi(x,\tau)\, d\tau
\]
satisfies the Schr\"odinger equation
\[
i\pd_t v_1+\De v_1=0
\]
on~$\RR^{n+1}$ and approximates~$v$ as
\begin{align*}
\|v-v_1\|_{L^2(\Om)}^2 &=2\pi \int_{|\tau|>\tau_\ep}\int_D|\hv(x,\tau)|^2\, dx\,
d\tau + 2\pi\int_{|\tau|<\tau_\ep}\int_D |\hv(x,\tau)-\hpsi(x,\tau)|^2\, dx\,
                         d\tau \\
  &\leq M^2\tau_\ep^{-\si} + C\ep'
    \int_{|\tau|<\tau_\ep}\int_D\langle\tau\rangle^{1/2}|\hv(x,\tau)|^2\,
    dx\, d\tau\\
  &\leq C\ep^2 M^2\,.
\end{align*}
It follows from the formula~\eqref{formulahpsi} and the bounds derived
in Theorem~\ref{T.Helmholtz} that $v_1$ is a smooth
function satisfying the pointwise bound
\begin{equation}\label{brutal}
|v_1(x,t)|\leq (N_\ep)^{N_\ep} \exp\big( C\ep^{-\frac1{\si}}|x|\big)\,,
\end{equation}
with
\[
N_\ep:= \exp\exp \big(C\ep^{-\frac2{\si}}\big)\,.
\]


\smallskip

\step{Step~2: Approximation via compactly supported
  initial data} Let us now set
\[
  u_\de(x):= v_1(x,0)\, e^{-\de|x|^2}\,,
\]
where $\de>0$ is a small positive constant to be determined later, and set
\[
w(x,t):= e^{it\De} u_\de(x) \,.
\]
Note that $u_\de$ is
obviously in the Schwartz space~$\cS(\RR^n)$ by the
bounds~\eqref{Bound1}-\eqref{Bound2}. Therefore, using the
formula~\eqref{formulahpsi} and the integral formulation
\[
e^{it\De} u_\de(x)= \int_{\RR^n} G(x-y,t)\, u_\de(y)\, dy\,,
\]
one can write
\begin{equation}\label{uep}
  w(x,t)= (4\pi i t)^{-\frac
    n2}e^{i|x|^2/4t}\sum_{l=0}^{l_0} \sum_{m=1}^{d_l} \int_{|\tau|<\tau_\ep} A_{lm}(\tau)\,
  \cB_{lm}(x,t,\tau;\de)\,d\tau
  \,,
\end{equation}
where
\[
\cB_{lm}(x,t,\tau;\de):=\int_{\RR^n} e^{i\frac{|y|^2-2x\cdot y}{4t}-\de|y|^2}|y|^{1-\frac n2}\,Y_{lm}\Big(
  \frac y{|y|} \Big) \, I_{l+\frac n2-1}(|y|\sqrt\tau) \, dy\,.
\]

Let us integrate over the angular variables first. Denoting the unit
sphere by~$\SS^{n-1}$, let us now record the expression for the
Fourier transform of a spherical harmonic~\cite{Canzani}:
\[
\int_{\SS^{n-1}}Y_{lm}(\om)\, e^{-i\xi\cdot \om}\, d\si(\om) =  (-i)^l (2\pi)^{\frac
  n2} \, Y_{lm}\Big(
  \frac \xi{|\xi|} \Big) \, \frac{ J_{l+\frac
      n2-1}(|\xi|)}{|\xi|^{\frac n2-1}}\,.
\]
Using this formula and introducing spherical coordinates
\[
\rho:=|y|\in (0,\infty)\,,\qquad \om:= y/|y|\in \SS^{n-1}\,,
\]
the function
$\cB_{lm}(x,t,\tau;\de)$ can be readily written as
\begin{align*}
  \cB_{lm}(x,t,\tau;\de)&= \int_0^\infty e^{-(\de- \frac
                      i{4t})\rho^2}\rho^{\frac{n}2}\, I_{l+\frac n2-1}(\rho\sqrt\tau)\int_{\SS^{n-1}} Y_{lm}(\om)
                      \,e^{-i \frac{\rho x\cdot \om}{2t}} \,d\si(\om)\, d\rho\\
                    &= (- i)^l (2\pi)^{\frac
                      n2} \, Y_{lm}\Big(
                      \frac x{|x|} \Big) \, \cM_{lm}(|x|,t,\tau;\de)\,,
\end{align*}
where we have set, for $r>0$,
\[
  \cM_{lm}(r,t,\tau;\de):=\bigg(\frac{2t}{r}\bigg)^{\frac n2-1} \int_0^\infty e^{-(\de- \frac
                      i{4t})\rho^2}\rho\, I_{l+\frac
                      n2-1}(\rho\sqrt\tau)\, J_{l+\frac
                      n2-1}\Big(\frac{\rho r}{2t}\Big)\, d\rho\,.
                  \]
                  We can now use the formula~\cite[6.633.4]{GR} to compute the integral
in closed form:
\begin{multline*}
  \cM_{lm}(r,t,\tau;\de)=\\
  \frac{i(2t)^{\frac n2}}{1 + 4it\de} \,
\exp\bigg(\frac{-\de (r^2-4\tau t^2) + i(\tau t -\frac{r^2}{4t}) }{1+16\de^2t^2}\bigg)\, r^{1-\frac n2}\,
J_{l+\frac n2-1}\bigg(\frac{ir\sqrt\tau}{1+4it\de }\bigg)\,.
\end{multline*}

Plugging these formulas in the expression~\eqref{uep}  and using the identity between Bessel functions~\eqref{JI}, we readily find
that
\[
\lim_{\de\searrow0} w(x,t)= |x|^{1-\frac n2} \sum_{l=0}^{l_0} \sum_{m=1}^{d_l} Y_{lm}\Big(
                      \frac x{|x|} \Big) \int_{|\tau|<\tau_\ep}
                      e^{it\tau}\, A_{lm}(\tau)\,
  I_{l+\frac n2-1} (|x|\sqrt \tau)\,d\tau\,.
\]
In view of the formula for $v_1(x,t)$,
the above limit can be rewritten as
\begin{equation}\label{conv1}
\lim_{\de\searrow0} w(x,t)= v_1(x,t)
\end{equation}
uniformly for $(x,t)$ in any compact spacetime subset. 

To estimate the difference $v_1-e^{it\De} u_\de$ on $\Om_T=D\times
(-T,T)$, it suffices to notice that the dependence on the
parameter~$\de$ (which only appears in the function
$\cM_{lm}(r,t,\tau;\de)$) can be controlled using that
\begin{equation}\label{conv2}
|\cM_{lm}(r,t,\tau;\de_0)-\cM_{lm}(r,t,\tau;0)|\leq \de_0
\sup_{0\leq\de\leq\de_0}\bigg|\frac\pd{\pd\de}\cM_{lm}(r,t,\tau;\de)\bigg|\,.
\end{equation}
Therefore, using the bounds for Bessel functions as in Theorem~\ref{T.Helmholtz}
and the bounds for the constants $\tau_\ep$ and $A_{lm}(\tau)$, after
some straightforward manipulations one
obtains that there is some~$\de_T>0$, depending on~$T$, such
that
\begin{equation}\label{conv4}
\sup_{0\leq\de\leq\de_T}\sup_{0<r<R,\; |t|<T,\;
  |\tau|<\tau_\ep}\bigg|\frac\pd{\pd\de}w(x,t)\bigg|\leq
C_TM (N_\ep)^{N_\ep}\,.
\end{equation}
Here we have set
\[
N_\ep:= e^{e^{C\ep^{-1/\si}}}\,.
\]
One can therefore take some positive $\de_\ep$ of the form
\[
\de_\ep:= (N_\ep)^{-N_\ep}
\]
such that
\[
\|w-v_1\|_{L^2(\Om_T)}\leq C\ep M\,.
\]
The $L^2$~norm of the initial datum $u_\de$ can now be computed
using the pointwise bound~\eqref{brutal}:
\begin{align*}
\|u_\de\|_{L^2(\RR^n)}^2 &=\int_{\RR^n} e^{-2\de |x|^2}|v_1(x,0)|^2\,
                           dx\\
  &\leq (N_\ep)^{N_\ep} \int_{\RR^n} e^{-2\de
    |x|^2}\exp(C\ep^{-\frac1{\si}|x|})\, dx\\
  &\leq (N_\ep)^{N_\ep}\,.
\end{align*}
The first assertion of Theorem~\ref{T.qGAT}
then follows.\smallskip

\step{Step~3: The interior approximation estimate} The proof is
exactly the same but one uses interior estimates for the
function~$\hpsi(x,\tau)$, which result in the sharper bound for~$v_1$
\[
|v_1(x,t)|< N_\ep\exp(C\ep^{-\frac1{\si}}\langle x\rangle)\,.
\]
Substituting this bound in the integral for the $L^2$~norm of the
initial datum~$u_\de$, this yields
\[
\|u_\de\|_{L^2(\RR^n)} \leq C N_\ep\,,
\]
which completes the proof of the theorem.
\end{proof}

The following result is a straightforward variation of
Theorem~\ref{T.qGAT} where we impose additional regularity on the
local solution to control more derivatives of the functions involved. This is needed in the proof of Theorem~\ref{T.GAT}. For
concreteness, we only consider the interior case, which suffices for our purposes:

\begin{lemma}\label{L.higherreg}
  Let $\Om:= D\times \RR$, where $D\subset\RR^n$ is a bounded set with
  smooth boundary whose complement $\RR^n\backslash D$ is connected. Take $k\geq1$
  and fix some smaller set $D'\ssubset D$. Assume that
  $v\in L^2H^k(\Om)$ satisfies the Schr\"odinger equation~\eqref{Sch}
  in~$\Om$ and its time Fourier transform is bounded as
  \begin{equation}\label{conv3}
\int_{|\tau|>\tau_0}\int_D \langle\tau\rangle^k|\hv(x,\tau)|^2\, dx\, d\tau\leq
M^2\langle\tau_0\rangle^{-\si}
\end{equation}
for some $\si>0$ and all $\tau_0\geq0$. Then, for each $\ep\in
  (0,1)$ and any $T>0$, one can take an
      initial datum $u_0\in \cS(\RR^n)$ such that $u:= e^{it\De} u_0$
    approximates~$v$ on~$\Om'_T:=D'\times (-T,T)$ as
    \[
\|v-u\|_{L^2H^k(\Om'_T)}\leq \ep M
    \]
    and $u_0$ is bounded as
    \[
\|u_0\|_{H^k(\RR^n)}\leq  e^{e^{C \ep^{-\frac1{\si}}}}M\,.
    \]
 The constant~$C$
 depends on $k$,  $T$ and on the geometry of the domains.
\end{lemma}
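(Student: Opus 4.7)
The plan is to repeat the three-step proof of Theorem~\ref{T.qGAT} (interior case), carefully tracking $k$ spatial derivatives at each stage. The key observation is that both $\hv(\cdot,\tau)$ and its approximant $\hpsi(\cdot,\tau)$ satisfy the Helmholtz--Yukawa equation, so interior elliptic regularity trades $L^2$-control for $H^k$-control at the price of an explicit $\langle\tau\rangle^{k/2}$ factor, which is absorbed by the extra weight in hypothesis~\eqref{conv3}. Concretely, I would fix nested domains $D'\ssubset D''\ssubset D$ and, for each frequency $\tau$, apply the interior part of Theorem~\ref{T.Helmholtz} on the pair $(D,D'')$ with accuracy $\ep':=\ep^K$ (with $K$ fixed at the end) to produce a global Helmholtz--Yukawa solution $\hpsi(\cdot,\tau)$ approximating $\hv(\cdot,\tau)$ in $L^2(D'')$. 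Since $\hpsi-\hv$ again solves the Helmholtz--Yukawa equation on $D''$, interior elliptic regularity with constants depending polynomially on $\langle\tau\rangle$ upgrades this to an $H^k(D')$-bound of size at most $C\ep'\langle\tau\rangle^{k/2+1/4}\|\hv(\cdot,\tau)\|_{L^2(D)}$. I would then truncate high frequencies by setting $v_1(x,t):=\int_{|\tau|<\tau_\ep}e^{i\tau t}\hpsi(x,\tau)\,d\tau$ with $\tau_\ep\sim\ep^{-2/\si}$; Plancherel in time splits $\|v-v_1\|_{L^2H^k(\Om'_T)}^2$ into a high-frequency tail controlled by hypothesis~\eqref{conv3} (of order $M^2\tau_\ep^{-\si}\lesssim\ep^2M^2$) and a middle-range term bounded by $(\ep')^2\int_{|\tau|<\tau_\ep}\langle\tau\rangle^{k+1/2}\|\hv(\cdot,\tau)\|_{L^2(D)}^2\,d\tau\lesssim(\ep')^2\langle\tau_\ep\rangle^{1/2}M^2$, which is $O(\ep^2M^2)$ once $K$ is taken larger than $1+1/(2\si)$.

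The third step mirrors Step~2 of the proof of Theorem~\ref{T.qGAT}: define the Schwartz initial datum $u_\de(x):=v_1(x,0)\,e^{-\de|x|^2}$ and let $u:=e^{it\De}u_\de$. The explicit Bessel-function calculation producing the pointwise convergence $u\to v_1$ via~\eqref{conv1} and the derivative estimate~\eqref{conv4} can be differentiated up to $k$ times in~$x$ with no essential change, since each spatial derivative only brings down bounded powers of $|x|$ and factors already governed by the doubly-exponential bounds on $A_{lm}(\tau)$ and $l_\tau$. Choosing $\de_\ep$ as in Theorem~\ref{T.qGAT} yields $\|u-v_1\|_{L^2H^k(\Om'_T)}\leq C\ep M$, while the $H^k(\RR^n)$-norm of $u_\de$ is bounded by integrating the pointwise estimates on $\pd^\al v_1(\cdot,0)$ for $|\al|\leq k$ against $e^{-2\de_\ep|x|^2}$, producing the claimed $e^{e^{C\ep^{-1/\si}}}M$ bound.

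The main obstacle is pure bookkeeping: verifying that the $\langle\tau\rangle^{k/2}$ factors picked up by elliptic regularity are compensated by the $\langle\tau\rangle^k$ weight in hypothesis~\eqref{conv3} together with a sufficiently generous choice of~$K$, and that the same compensation works uniformly in the number of spatial derivatives commuted past the Bessel integrals in Step~3. There is no genuinely new analytic content beyond Theorem~\ref{T.qGAT}: the frequency-dependent Helmholtz--Yukawa approximation of Theorem~\ref{T.Helmholtz}, the Bessel-integral identity for $e^{it\De}u_\de$, and the Gaussian truncation enforcing Schwartz regularity all reapply verbatim.
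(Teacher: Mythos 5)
Your proposal matches the paper's proof essentially verbatim: it reuses the frequency-dependent Helmholtz--Yukawa approximation of Theorem~\ref{T.Helmholtz} (interior case) with $\ep'=\ep^K$, $K>1+1/(2\si)$, upgrades the $L^2$ control to $H^k$ by interior elliptic regularity at a cost of $\langle\tau\rangle^{k/2}$, absorbs this by the extra $\langle\tau\rangle^k$ weight in the hypothesis, and then carries $k$ spatial derivatives through the Gaussian-truncated Bessel-integral construction of Step~2 of Theorem~\ref{T.qGAT}. The only difference is cosmetic: you make explicit the middle domain $D'\ssubset D''\ssubset D$ needed to combine the interior approximation estimate (which controls $\|\hpsi-\hv\|_{L^2(D'')}$) with interior elliptic regularity from $D''$ to $D'$, which is a helpful clarification of the paper's slightly condensed one-line statement of that estimate.
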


\begin{proof}
The proof is just as in Theorem~\ref{T.qGAT} modulo minor
changes. Indeed, with the faster convergence rate that we have required on the
integral~\eqref{conv3} and the obvious estimate
\[
\|\hpsi(\cdot,\tau) - \hv(\cdot,\tau)\|_{H^k(D')}\leq
C\langle\tau\rangle^{k/2} \|\hpsi(\cdot,\tau) - \hv(\cdot,\tau)\|_{L^2(D)}\,,
\]
the function $v_1$, defined as above, is readily shown to
approximate~$v$ as
\[
\|v-v_1\|_{L^2H^k(\Om)}<CM\ep\,.
\]

One can define $u_\de$ as in the proof of Theorem~\eqref{T.qGAT} so that the approximation holds
in~$L^2H^k$. Indeed, it is not hard
to see that a
statement just like~\eqref{conv1} also holds when one takes spatial derivatives on both sides of
the equation. To obtain bounds, it suffices to replace the
estimate~\eqref{conv4} by
\[
\sum_{j+m\leq k}\sup_{0\leq\de\leq\de_T}\sup_{0<r<R,\; |t|<T,\;
  |\tau|<\tau_\ep}\langle l\rangle^j \bigg|\frac\pd{\pd\de}\nabla^mw(x,t)\bigg|\leq
C_TN_\ep M \,.
\]
The claim readily follows.
\end{proof}

The non-quantitative approximation theorem stated in the Introduction
is now an easy consequence of the results that we have already established:

\begin{proof}[Proof of Theorem~\ref{T.GAT}]
Assume that the set~$\Om$ is contained in 
$B_{R/2}\times (-R,R)$ and take an $(\Om,R)$-admissible set~$S$ (see Definition~\ref{D.admissible}). By
Lemma~\ref{L.nonquant}, there is a function~$f\in C^\infty_c(S)$ such
that the function $u:=\cT f$ approximates~$v$ as
\[
\|v-u\|_{L^2H^s(\Om')}<\frac\ep 2\,.
\]
Note that, by definition, this function satisfies the Schr\"odinger equation
\[
i\pd_t u+\De u=0
\]
in~$B_R\times\RR$. Furthermore, it follows from the fact that $f$~is compactly
supported and the expression of the fundamental solution~$G(x,t)$
that~$u$ is bounded as
\[
\sup_{x\in B_R}|\pd_t^N u(x,t)|< \frac C{\langle t\rangle^{\frac n2+N}}\,.
\]
Denoting by $\hu(x,\tau)$ the Fourier transform of~$u(x,t)$ with
respect to time, it then follows from the mapping properties of the
Fourier transform that for all $n\geq2$ and all $N\geq1$ one has
\begin{equation}\label{boundug}
\sup_{x\in B_R} \|\hv(x,\cdot)\|_{L^2(\RR)} + \sup_{x\in B,\;\tau\in\RR}|\tau^N
\hv(x,\tau)|< C\,,
\end{equation}
where the constant depends on~$N$. In view of this decay property of
the time Fourier transform of~$u$, Lemma~\ref{L.higherreg} ensures
that there exists an initial datum~$w_0\in \cS(\RR^n)$ such that $w:=
e^{it\De}w_0$ approximates~$u$ as
\[
\|u-w\|_{L^2H^k(B_{R/2}\times(-R,R))}<\frac\ep2\,.
\]
The theorem is then proved.
\end{proof}

\section{Vortex reconnection for the Gross--Pitaevskii equation}
\label{S.VR}

In this section we provide the proof of the result on vortex
reconnection for the Gross--Pitaevskii equation
(Theorem~\ref{T.GP}). For convenience, we will divide the proof in
three steps:

\subsubsection*{Step 1: Construction of a local solution using
  noncharacteristic hypersurfaces in spacetime}

Let $\Si\subset\RR^4$ be a pseudo-Seifert surface connecting the
curves~$\Ga_0,\Ga_1\subset\RR^3$ in time~$T$, as defined in the
Introduction. The existence of these surfaces is standard because all
closed curves in~$\RR^4$ are isotopic. (It should be noticed,
however, that in general one cannot choose~$\Si$ as a knot cobordism,
that is, homeomorphic to $\Ga_0\times(0,1)$.) One can also
extend~$\Si$ so that it is defined for times slightly smaller than~0
and slightly larger than~$T$.

Moreover, if necessary one can deform the
curves $\Ga_0,\Ga_1$ with a smooth diffeomorphism arbitrarily close to
the identity in the $C^k$~norm to ensure that the curves $\Ga_0,\Ga_1$
and the surface~$\Si$ are real analytic, and that~$\Si$ is in general
position with respect to the time axis in the sense that
the set of points
\[
\{(x,t)\in\Si: e_4\in N_{(x,t)}\Si\}
\]
is finite. Here $e_4:= (0,0,0,1)$ is the time direction and
$N_{(x,t)}\Si$ is the normal plane of~$\Si$ at the
point~$(x,t)$. We will label all the points of this form as
\begin{equation}\label{badpts}
\{X_j:=(x_j,t_j)\}_{j=1}^J\,.
\end{equation}
Equivalently, this means that, after deforming~$\Si$ by
a small diffeomorphism if necessary, the coordinate~$t$ is a Morse
function on~$\Si$, so the claim is a straightforward consequence of the
density of Morse functions~\cite[Theorem 6.1.2]{Hirsch}.
It is also standard that we can also assume all the critical points
$X_j$ of the
function $t|_\Si$ correspond to different critical values, meaning
that $t_k\neq t_j$ for all $1\leq j\neq k\leq J$. For future
reference, let us denote the set of these critical times by
\begin{equation}\label{cP}
\cP:=\{ t_j : 1\leq j\leq J\}\,.
\end{equation}
Observe that an equivalent characterization of this set is
\[
\cP:=\{t\in (0,T_1): e_4\in N_{(x,t)}\Si \text{ for some point }
(x,t)\in\Si\}\,.
\]

We now claim that there exists a vector field $a(x,t)$ on~$\RR^4$ such
that, for every point $(x,t)\in\Si$, the vector
\begin{equation}\label{defW}
W(x,t):= a(x,t)\wedge \tau_1\wedge \tau_2
\end{equation}
is not parallel to the time direction, $e_4$ (in particular,
nonzero). Here $(\tau_1,\tau_2)$ is any oriented orthonormal basis of
the tangent space $T_{(x,t)}\Si$ and the product of three vectors
$V_1\wedge V_2\wedge V_3$ in~$\RR^4$ is defined as~0 if the vectors
are linearly dependent and as the only vector, modulo a
multiplicative factor that is inessential for our present purpose, that is orthogonal to $V_1$, $V_2$
and~$V_3$. (The multiplicative factor is of course determined by the
norms of~$V_j$ and the orientation of~$\RR^4$.)

To show the existence of the vector field~$a(x,t)$, let us recall that the
normal bundle of~$\Si$ in~$\RR^4$ is trivial~\cite{Massey}, so there are analytic
vector fields $N_1(x,t)$ and~$N_2(x,t)$ on~$\RR^4$ such that
\[
N_{(x,t)}\Si =\Span\{ N_1(x,t), N_2(x,t)\}\,.
\]
One can then write the vector field~$a$ as
\[
a(x,t)= f_1(x,t)\, N_1(x,t)+ f_2(x,t)\, N_2(x,t)\,,
\]
where $f_1,f_2$ are analytic real-valued functions on~$\RR^4$ to be
determined. Since obviously the vector~$W(x,t)$ must be normal to the
surface~$\Si$ at the point~$(x,t)$ and~$e_4$ is only normal at the
finite number of points specified in~\eqref{badpts}, it is clear
that the only conditions that the functions~$f_1,f_2$ must satisfy are
\[
[f_1(x_j,t_j)\, N_1(x_j,t_j) + f_2(x_j,t_j)\, N_2(x_j,t_j)]\cdot e_4\neq0
\]
for $1\leq j\leq J$ to ensure that $W(x_j,t_j)$ is not parallel
to~$e_4$ at these points and
\[
f_1(x,t)^2 + f_2(x,t)^2\neq0 
\]
for all $(x,t)\in\Si$ to make sure that $W(x,t)$ is always
nonzero. The existence of the functions $f_1,f_2$ is then apparent.

Now that we have the vector field~$a(x,t)$, we can next define an analytic
hypersurface~$S$ in~$\RR^4$ as
\[
S:=\{(x,t)+s\, a(x,t): (x,t)\in\Si,\; |s|<s_0\}\,,
\]
where $s_0>0$ is small enough. It follows from the construction that
the normal direction~$N$ of~$S$, which is proportional to the vector
field~$W$ defined in~\eqref{defW} modulo an error of size $O(s_0)$, is never parallel to the time
direction, $e_4$. Furthermore, it is clear from the definition of~$S$
that it is diffeomorphic to the product $\Si\times (-s_0,s_0)$, so one
can take a real-valued analytic function $\phi:S\to\RR$ on the
hypersurface~$S$ such that
\begin{equation}\label{phiSi}
  \phi^{-1}(1)=\Si
\end{equation}
and its gradient is
transverse to~$\Si$, i.e., that its intrinsic gradient (or covariant
derivative) $\nabla_S\phi$ does not vanish on~$\Si$.

Consider the Cauchy problem
\begin{equation}\label{CK}
i\pd_t v+\De v=0\,,\qquad v|_S=\phi\,,\qquad N\cdot \nabla_{x,t} v=i\,,
\end{equation}
where $N$ is a unit normal vector to the hypersurface~$S$ in~$\RR^4$
and $\nabla_{x,t} v$ denotes the spacetime gradient of~$v$. The
hypersurface~$S$ is non-characteristic for the Schr\"odinger equation
because $N$ is never parallel to the time direction. Hence the Cauchy--Kowalewskaya  theorem
ensures that there exists a real analytic solution~$v$ to the problem~\eqref{CK}
defined in a neighborhood~$V\subset\RR^4$ of~$S$.

An important observation is that
\begin{equation}\label{v1}
  v^{-1}(1)=\Si
\end{equation}
provided that we take a
small enough neighborhood~$V$. In order to see this, let us denote the real and imaginary parts of~$v$ by
\[
v=v_1+iv_2
\]
and notice that the Cauchy conditions we have imposed can be rewritten as
\[
v_1|_S=\phi\,, \quad  v_2|_S =0\,,\quad N\cdot \nabla_{x,t}
v_1=0\,,\quad  N\cdot \nabla_{x,t} v_2=1\,.
\]
Therefore $v_2$ only vanishes on~$S$, while $(v_1|_S)^{-1}(1)=\Si$. Furthermore,
the gradients of~$v_1$ and~$v_2$ are transverse
on~$\Si=v_1^{-1}(1)\cap v_2^{-1}(0)$, that is,
\begin{equation*}
\rank (\nabla_{x,t} v_1,\nabla_{x,t} v_2)=2\quad \text{on } \Si\,.
\end{equation*}
This is clear because $N\cdot \nabla_{x,t}v_1|_S=0$, which ensures that
\[
\nabla_{x,t}v_1|_S=\nabla_S\phi\,,
\]
which is transverse to~$\Si$ by the definition of~$\phi$, while
$\nabla_{x,t}v_2|_S=N$. 

\subsubsection*{Step~2: Robust geometric properties of the local solution}

By Equation~\eqref{v1}, it is clear that
\[
Z_{1-v}(t)=\Si_t
\]
for all~$t$, where we recall that $\Si_{t_0}:=\{(x,t)\in \Si: t=t_0\}$ is the
intersection of~$\Si$ with the time~$t_0$ slice.  As~$t$ is a Morse
function on~$\Si$ by construction, the reconnection times~$t_j$ must
be critical values of the function~$t|_\Si$ because the
constant time slice and the surface~$\Si$ stop being transverse (that
is, the vector $e_4$ belongs to the normal plane at the critical
point $X_j:=(x_j,t_j)\in\Si$). Besides, as the critical level $\{t=
t_j\}\cap \Si$ must be a curve (i.e., of dimension~1), it follows
that the critical point $X_j$ must be a saddle point, that is, of
Morse index~1. The Morse lemma then ensures that there are smooth
local coordinates $(y_1,y_2)$ on a small neighborhood of~$X_j$
in~$\Si$ such that, in that neighborhood,
\begin{equation}\label{hyperb}
t|_\Si= t_j+ y_1^2-y_2^2\,.
\end{equation}
Furthermore, with $X:=(x,t)\in\RR^4$, there are two linearly independent vectors $V_1,V_2$ on~$\RR^4$
such that
\begin{equation}\label{defy}
y_j=V_j\cdot (X-X_j)+ O(|X-X_j|^2)\,.
\end{equation}
Since the distance between the two sheets of the
hyperbola~\eqref{hyperb} is
\[
2|t-t_j|^{1/2}
\]
when measured with respect to the metric $dy_1^2+dy_2^2$, it follows
from~\eqref{defy} that the distance~$d_j(t)$ between the corresponding two
components of the set $Z_{1-v}(t)$ near the reconnection time~$t_j$ is
bounded as
\begin{equation}\label{dk}
\frac1C |t-t_j|^{1/2}\leq d_j(t)\leq C |t-t_j|^{1/2}\,.
\end{equation}
It is also a standard consequence of Morse theory for functions on a
surface whose critical points have all distinct critical values
that the parity of the number of components of a level set changes as
one crosses a critical value.

The above geometric construction is robust under suitable
perturbations of the function~$v$. More precisely, Thom's transversality theorem~\cite[Theorem
20.2]{AR} ensures that, given any~$k\geq1$ and~$\de>0$, there exists some~$\ep>0$ such that:
\begin{enumerate}
\item The level set of value~1 of any function~$w$ with
\begin{equation}\label{Thom1}
\|w-v\|_{C^k(V)}<\de
\end{equation}
satisfies
\begin{equation}\label{Thom2}
w^{-1}(1)\cap V= \Psi(\Si)\,,
\end{equation}
where $\Psi$ is a smooth diffeomorphism of~$\RR^4$ with
$\|\Psi-\id\|_{C^k(\RR^{n+1})}<\ep$.
\item There is a finite union of closed intervals $\cI\subset (0,T)$ containing
  the set~$\cP$ (cf.~\eqref{cP}) of total length less than~$\ep$ and a
  continuous one-parameter family of diffeomorphisms
  $\{\Phi^t\}_{t\in\RR}$ of~$\RR^3$ with
  $\sup_{t\in\RR}\|\Phi^t-\id\|_{C^k(\RR^3)}<\ep$ such that
  $\Phi^t(\Si_t)= Z_{1-w}(t)\cap V$ for all
  $t\in [0,T]\backslash\cI$.

  \item The distance~$d_j(t)$ between the two
components of the set $Z_{1-w}(t)\cap V$ near a critical point~$X_j'$
of index~1 of the
Morse function $t|_{\Phi(\Si)}$ is
bounded as in Equation~\eqref{dk}.

\item The parity of the number of components of $Z_{1-w}(t)\cap V$
  is different at each time $t= t_j-\de'$ and $t_j+\de'$, for any 
  small enough $\de'>0$. 
\end{enumerate}

\begin{remark}\label{R.cI}
By an easy transversality argument, one can provide a more exhaustive
description of the zero set $Z_{1-w}(t)$ as follows. For $t\in
[0,T]\backslash\cI$, item~(ii) means that  $Z_{1-w}(t)\cap V$ is a small
deformation of the smooth embedded curve $\Si_t$ that corresponds
to the intersection of the spacetime surface~$\Si$ with the time slice
$\RR^3\times\{t\}$. The times~$t_0\in \cP$ are those at which
the curve $\Si_{t_0}$ self-intersects, so that $\Si_{t_0}$ is then a smooth
immersed curve. What happens is that there is another finite
set~$\cP'$, at a distance at most~$\ep$ of~$\cP$ and of the same
cardinality, such that for all $t_0'\in\cP'$, $Z_{1-w}(t_0')\cap V$ is also a
smooth immersed curve, while for $t\in \cI$ slightly above or below a
critical time $t_0'\in\cP'$ as above, the zero set
$Z_{1-w}(t)\cap V$ is a smooth embedded curve with the same structure as
before.
\end{remark}

\subsubsection*{Step 3: Construction of the solution to the
  Gross--Pitaevskii equation}

Theorem~\ref{T.GAT} and Remark~\ref{R.reg} guarantee that there
exists a Schwartz function $w_0\in
\cS(\RR^n)$ such that $w:=e^{it\De}w_0$ approximates the above
function~$v$ as
\[
\|w-v\|_{C^k(V)}<\ep/2\,,
\]
where ~$k\geq1$ can be chosen at will.

Let us now consider the rescaled Gross--Pitaevskii equation
\[
i\pd_t\tu+\De \tu+\de(1-|\tu|^2) \tu=0
\]
on~$\RR^3$ with initial datum
\[
\tu(x,0)=1-w_0(x)\,,
\]
where~$\de>0$ is a small constant. In view of Duhamel's formula
\[
\tu(x,t)= 1-w(x,t)+ i\de\int_0^t e^{i(t-s)\De}(1-|\tu(x,s)|^2)\tu(x,s)\, ds\,,
\]
it is standard (see
e.g.~\cite{Tao}) that, 
for all small enough~$\de$, there exists a global
solution~$\tu$ to this
equation with
\[
1-\tu\in C^\infty\loc(\RR,\cS(\RR^3))\,,
\]
which is bounded as
\[
\|\tu-1+ w\|_{C^k([-T,T]\times\RR^3)} \leq C_T\de
\]
for any $T>0$. The constant~$C$ depends on~$T$ and~$w_0$ but not on~$\de$. It then follows from our application of Thom's isotopy
theorem~\eqref{Thom1} in Step~2 that the zero set $\tu^{-1}(0)$
satisfies~\eqref{Thom2} for some smooth diffeomorphism~$\Psi$
of~$\RR^4$ with $\|\Psi-\id\|_{C^k(\RR^4)}<\ep$, and that the zero set
$Z_{\tu}(t)$ is of the form described in item~(ii) of
Step~2. Also by Step~2, $\tu$~satisfies the $t^{1/2}$~law and the change of
parity property.

Notice that the function
\[
u(x,t):=  \tu (\de^{-1/2}x, t/\de)
\]
satisfies the Gross--Pitaevskii equation
\[
i\pd_tu+\De u+(1- |u|^2) u=0\,,\qquad u(x,0)=1- w_0(\de^{-1/2}x)
\]
and tends to~1 as
\[
1-u\in C^\infty\loc(\RR,\cS(\RR^3))\,.
\]
Since $u$ is just an (anisotropic) rescaling of~$\tu$, we infer that
the zero set of~$u$ is of the form described in the statement of the theorem.

\section{Comparison with experimental observations and solutions
  with other conditions at infinity}
\label{S.observed}

\subsection{Comparison with experimental results}

A remarkable feature of the strategy that we have employed to prove
the existence of solutions to the Gross--Pitaevskii equation featuring
vortex reconnection is that it presents the same qualitative
properties that are observed in the physics literature:

\subsubsection*{The $t^{1/2}$ law}

As we discussed in the Introduction, in the reconnection scenarios
that we construct, the distance between reconnecting vortices near the
reconnection time~$T$ behaves as $|t-T|^{1/2}$. This is in perfect
agreement with the~$t^{1/2}$ law for the separation velocities
that have measured in the
laboratory~\cite{Bewley}, observed numerically~\cite{Villois} and
heuristically explained in~\cite{Nazarenko}.

\subsubsection*{Change of parity of the components at reconnection}

We also stressed that the parity of the number of reconnecting quantum
vortices is numerically observed to change at each reconnection time~\cite{Irvine}, as depicted
in Figure~\ref{Figure}. The reconnection scenarios
we construct also feature this property as an indirect consequence of
Morse theory for functions on surfaces.

\subsubsection*{Birth and death of quantum vortices}

Numerical simulations also show that quantum vortices can be created
or destroyed~\cite{Irvine}, as also shown in Figure~\ref{Figure}
(bottom). This is a degenerate case of vortex reconnection that
appears, in our scenarios, whenever the Morse function $t|_\Si$ has any
local extrema.

\subsubsection*{Pseudo-Seifert surfaces as a universal scenario
  of vortex reconnection}

The time evolution of a quantum vortex (which is generically, at each
time, a smooth curve in~$\RR^3$) automatically defines a surface~$\Si$
is spacetime~$\RR^4$. Generically, this surface is smooth by Sard's
theorem and the time coordinate is a Morse function
on~$\Si$. Therefore, the description of vortex reconnection we use in
the construction of the scenarios is, in a way, universal.

\subsection{Solutions to NLS that decay at infinity: the
  case of laser beams}

It is folk wisdom in physics that, in the Gross--Pitaevskii equation,
if one replaces the asymptotic condition $u(x,t)\to 1$ as
$|x|\to\infty$ by a decay condition (e.g., that~$u$ be square
integrable), it should be easier to show that there is a wealth of
reconnections. In the language of physics, this is because the
condition $u\to 1$ is associated with the existence of a chemical
potential at infinity. In constrast, the decay condition $u\to0$ corresponds to the
more flexible case of optical vortices, which describes laser beams~\cite{Dennis}.

We shall next mention how the strategy that we have
developed applies to the case of laser beams (and to many other
nonlinear Schr\"odinger equations). Remarkably, we do find
that in this setting the argument leads to a stronger reconnection
theorem, in that the diffeomorphism that appears in the statement can
be arbitrarily close to the identity:

\begin{theorem}\label{T.laser}
  Consider two links $\Ga_0,\Ga_1\subset\RR^3$ and a pseudo-Seifert
  surface~$\Si\subset\RR^4$
  connecting~$\Ga_0$ and~$\Ga_1$ in time~$T>0$. For any $\ep>0$ and
  any~$k>0$, there is a Schwartz initial datum $u_0\in\cS(\RR^3)$ such
  that the corresponding solution to the Gross--Pitaevskii equation $u\in C^\infty\loc(\RR, \cS(\RR^3))$
  realizes the vortex reconnection pattern
  described by~$\Si$ up to a small deformation. More precisely, for
  any fixed $\ep>0$ and $k>0$, the
  properties (ii)--(iv) of Theorem~\ref{T.GP} hold with $\eta:=1$.
\end{theorem}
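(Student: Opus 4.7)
The plan is to adapt the proof of Theorem~\ref{T.GP}, replacing the spatial rescaling $\La_\eta$ by an amplitude rescaling $u_0\mapsto \la u_0$ with $\la>0$ small. The decay condition at infinity (rather than a background value of~1) means that small initial data produce a solution whose nonlinearity is of order $\la^3$ and can therefore be treated as a perturbation of a linear Schr\"odinger flow. Crucially, amplitude scaling preserves spatial scales, so no rescaling is needed and one obtains the stronger statement with $\eta=1$.

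The first ingredient is a minor variant of Steps~1 and~2 of the proof of Theorem~\ref{T.GP}. As there, construct a non-characteristic real-analytic hypersurface $S\subset\RR^4$ containing the pseudo-Seifert surface $\Si$, and pick on $S$ an analytic function $\phi$ that vanishes transversally on $\Si\subset S$. The only change is the Cauchy data: solve the Cauchy problem $v|_S=\phi$, $N\cdot\nabla_{x,t}v=i$ for the linear Schr\"odinger equation via Cauchy--Kowalewskaya. Then, writing $v=v_1+iv_2$, one has $v_2^{-1}(0)\cap V=S$ and $v_1^{-1}(0)\cap S=\Si$, so $v^{-1}(0)=\Si$ in a small neighborhood $V$ of $S$ (instead of $(1-v)^{-1}(0)=\Si$ as in Theorem~\ref{T.GP}). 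Thom's transversality theorem then yields the same robustness statement as before: given $k\ge 1$ and $\ep>0$, there exists $\de>0$ such that any function $\tw$ on~$V$ with $\|\tw-v\|_{C^k(V)}<\de$ has $\tw^{-1}(0)\cap V=\Psi(\Si)$ for some diffeomorphism $\Psi$ of $\RR^4$ with $\|\Psi-\id\|_{C^k(\RR^4)}<\ep$, and the $t^{1/2}$ separation law, the parity change at reconnection times, and the description of level sets analogous to Remark~\ref{R.cI} all hold for the time slices $Z_{\tw}(t)$.

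The second ingredient combines the global approximation theorem with a nonlinear perturbation argument. By Theorem~\ref{T.GAT} and Remark~\ref{R.reg}, there exists $w_0\in\cS(\RR^3)$ such that $w:=e^{it\De}w_0$ satisfies $\|w-v\|_{C^k(V)}<\de/2$. Set $u_0:=\la w_0$ with $\la>0$ to be chosen small and let $u$ solve the Gross--Pitaevskii equation~\eqref{GP} with this initial datum. The gauge transform $u=\la e^{it}q$ removes the linear part of the nonlinearity, and $q$ satisfies the focusing cubic NLS
\[
i\pd_t q+\De q-\la^2|q|^2 q=0\,,\qquad q(\cdot,0)=w_0\,,
\]
which for Schwartz data is globally well posed in $C^\infty\loc(\RR,\cS(\RR^3))$, with the standard Duhamel iteration yielding
\[
\|q-w\|_{C^k([-T,T]\times\RR^3)}\le C_T\,\la^2
\]
for any fixed $T>0$. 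Since the factor $\la e^{it}$ never vanishes, the zero sets of $u$ and~$q$ coincide. Choosing $\la$ small enough that $C_T\la^2<\de/2$ ensures $\|q-v\|_{C^k(V)}<\de$, so the Thom-transversality conclusion of the previous step, applied to $q$, yields a diffeomorphism $\Psi$ with $\|\Psi-\id\|_{C^k(\RR^4)}<\ep$ such that $u^{-1}(0)\cap V=\Psi(\Si)$, together with the full list of observed properties (ii)--(iv) from Theorem~\ref{T.GP}.

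The only genuine analytic point is the Schwartz well-posedness together with the $C^k$ perturbation estimate for the cubic NLS with small coupling $\la^2$ on the prescribed time interval $[-T,T]$; this is standard and poses no real obstacle. The decisive structural feature is that amplitude scaling decouples the control of the nonlinearity from the spatial geometry of the vortex configuration, which is precisely what allows the diffeomorphism to be taken arbitrarily close to the identity and dispenses with the anisotropic rescaling $\La_\eta$ needed in Theorem~\ref{T.GP}.
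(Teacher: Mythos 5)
Your proof is correct and follows essentially the same route as the paper: both adapt Steps 1--2 of Theorem~\ref{T.GP} by replacing level-$1$ sets with level-$0$ sets, and both exploit the phase gauge $u=\la e^{it}q$ (equivalently, the paper's $u=\de^{1/2}e^{it}\tu$) to reduce to a cubic NLS with small coupling $\la^2=\de$, on which Duhamel perturbation off the linear flow applies with no spatial rescaling. A minor slip: the equation $i\pd_t q+\De q-\la^2|q|^2q=0$ you obtain is the \emph{defocusing} cubic NLS (the paper's sign), not the focusing one, though this does not affect the small-data perturbation estimate on $[-T,T]$.
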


\begin{proof}
The proof goes just as in Theorem~\ref{T.GP}. Indeed, Steps~1 and~2
apply directly in this setting, the only difference being that the
level sets $v^{-1}(1)$ and $Z_{1-v}(t)$ (and similarly for~$w$) have
to be replaced by $v^{-1}(0)$ and $Z_v(t)$, and that the
condition $\phi^{-1}(1)=\Si$ (Equation~\eqref{phiSi}) must be replaced by
\[
\phi^{-1}(0)=\Si\,.
\]

In Step~3, one similarly considers the rescaled modified Gross--Pitaevskii equation
\[
i\pd_t\tu+\De \tu-\de|\tu|^2 \tu=0
\]
on~$\RR^3$ with initial datum
\[
\tu(x,0)=w_0(x)\,,
\]
where~$\de>0$ is a small constant. Another easy argument using
Duhamel's formula then yields that for all small enough~$\de$ there
exists a global solution $\tu\in C^\infty\loc(\RR,\cS(\RR^3))$ to this
equation, which is bounded as
\[
\|\tu- w\|_{C^k([-T,T]\times\RR^3)} \leq C\de\,.
\]
The results about the robustness of the geometric properties of~$v$
proved in Step~2 obviously apply to~$\tu$. If we now note that the function
\begin{equation}\label{eqfinal}
u(x,t):= \de^{1/2} e^{it}\,\tu (x,t)
\end{equation}
satisfies the Gross--Pitaevskii equation
\[
i\pd_tu+\De u+ (1- |u|^2) u=0
\]
and has the same zero set as~$\tu$, the result then follows. 
\end{proof}

\begin{remark}\label{R.Schrodinger}
It is clear from the proof that Theorem~\ref{T.laser} holds verbatim if one replaces the Gross--Pitaevskii
equation by a NLS equation of the form
\[
i\pd_t u+\De u + V(u,\overline u)=0
\]
provided that the nonlinearity $V(u,\overline u)$ is subcritical, a
smooth enough function of~$u$ and~$\overline u$, and of order $o(|u|)$
for $|u|\ll 1$. In particular, the result obviously holds for the
linear Schr\"odinger equation.
\end{remark}

\subsection{The periodic case: the Gross--Pitaevskii equation
  on~$\TT^3$}

To conclude, we shall next sketch how the above results can be
extended to the case of the Gross--Pitaevskii equation
\[
i\pd_t u+\De u + (1-|u|^2) u=0\,,\qquad u(x,0)=u_0(x)\,,
\]
when the spatial variable takes values in the 3-torus $\TT^3:=
(\RR/2\pi\ZZ)^3$.

For the ease of notation, we regard the unit ball $B_1\subset\RR^3$
as a subset of~$\TT^3$ with the obvious identification. The dilation
$\La_\eta$, introduced in~\eqref{dilation}, can then be understood as
a map from $B_1\subset\TT^3$ into itself provided that $\eta<1$.

\begin{theorem}\label{T.T3}
  Consider two links $\Ga_0,\Ga_1$ contained in the unit ball
  $B_1\subset\RR^3$ and a pseudo-Seifert surface~$\Si$
  connecting~$\Ga_0$ and~$\Ga_1$ in time~$T>0$. We assume that $\Si$
  is contained in the spacetime cylinder $B_1\times\RR$. For any
  $\ep>0$ and any~$k>0$, there is an initial datum
  $u_0\in C^\infty(\TT^3)$ such that the corresponding solution to the
  Gross--Pitaevskii equation $u\in C^\infty\loc(\TT^3\times\RR)$
  realizes the vortex reconnection pattern described by~$\Si$ up to a
  diffeomorphism. Specifically:
  \begin{enumerate}
  \item The evolution of the vortex set  $Z_u(t)$ is known for all
    times during the reconnection process: there is some $\eta>0$ and
  a diffeomorphism $\Psi$ of~$\TT^3\times\RR$ with
  $\|\Psi-\id\|_{C^k(\TT^3\times\RR)}<\ep$ such that $\La_{\eta}[\Psi(\Si)_t]$ is a union
  of connected components of $Z_u(\eta^2 t)$ for all~$t\in [0,T]$.
  
\item In particular, there is a smooth one-parameter family of
  diffeomorphisms $\{\Phi^t\}_{t\in\RR}$ of~$\TT^3$ with
  $\|\Phi^t-\id\|_{C^k(\TT^3)}<\ep$ and a finite
  union of closed
  intervals~$\cI\subset (0,T)$ of total length less than~$\ep$ such that $\La_\eta[\Phi^t(\Si_t)]$
  is a union of connected components of the set $Z_u(\eta^2 t)$ for all $t\in
  [0,T]\backslash\cI$.
  
\item The separation distance obeys the $t^{1/2}$ law and the parity
  of the number of quantum vortices of~$\Phi^t(\Si_t)$ changes at each
  reconnection time.
  \end{enumerate}
  
\end{theorem}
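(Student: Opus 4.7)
The plan is to run the three-step strategy used in Theorem~\ref{T.GP} on $\TT^3$ in place of $\RR^3$. Steps~1 and~2 of that proof (the construction of a local real-analytic solution~$v$ to the linear Schr\"odinger equation in a neighborhood of~$\Si$ via Cauchy--Kowalewskaya on a non-characteristic analytic hypersurface $S\supset\Si$, and the robustness of the level-set geometry under~$C^k$ perturbations via Thom's transversality theorem) are entirely local in spacetime. Since the hypothesis $\Si\subset B_1\times\RR$ allows us to choose the thickened hypersurface and the neighborhood $V\subset B_1\times\RR$, these steps transfer verbatim to~$\TT^3$, giving an analytic function $v$ on~$V$ with $v^{-1}(1)=\Si$ and having transverse real/imaginary parts on~$\Si$, together with the $t^{1/2}$ law and the change-of-parity law under any sufficiently $C^k$-close perturbation.

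The only step that needs genuine adaptation is Step~3, which uses a global approximation theorem. I would establish the following torus analog of Theorem~\ref{T.GAT}: if $v$ satisfies $i\pd_t v+\De v=0$ on a bounded open $\Om\subset\TT^3\times\RR$ with smooth boundary, and $\TT^3\setminus\Om_t$ is connected for every~$t$, then for any $\Om'\ssubset\Om$, any $s\in\RR$ and any $\ep>0$ there exists $w_0\in C^\infty(\TT^3)$ with $\|v-e^{it\De_{\TT^3}}w_0\|_{L^2H^s(\Om')}<\ep$. The Runge-type argument of Lemma~\ref{L.nonquant} adapts with essentially no changes: define $\cT,\cT^*$ via the fundamental solution of the free Schr\"odinger equation on $\TT^3\times\RR$ (or equivalently by $\cT f(\cdot,t)=\int_{-\infty}^t e^{i(t-s)\De_{\TT^3}}f(\cdot,s)\,ds$), mollify~$v$ to obtain a compactly supported source $f=\pd_t v^\eta-i\De v^\eta$, so that $v^\eta=\cT f$ on~$\Om'$. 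Pick an admissible cylinder $S=B\times(T_1,T_2)$ inside $(\TT^3\setminus\overline{\BOm})\times\RR$ (possible because $\Om_t\subset B_1$), and approximate $v^\eta|_{\Om'}$ in $L^2H^s$ by functions $\cT g|_{\Om'}$ with $g\in C^\infty_c(S)$; the Hahn--Banach step reduces to showing that if $\langle\phi,\cT g\rangle_{\Om'}=0$ for all such~$g$, then $\phi=0$, which follows from applying the unique continuation theorem (Theorem~\ref{T.UCP}, which is a strictly local statement and is valid on~$\TT^3$) to $\cT^*\phi$, propagating the vanishing from the connected admissible slab to all of $\TT^3\setminus\Om'_t$ via the connectedness hypothesis. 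The upgrade from $\cT g$ to $e^{it\De_{\TT^3}}w_0$ is actually easier than on~$\RR^n$: since the Laplacian on~$\TT^3$ has a pure-point spectrum with eigenfunctions $e^{ik\cdot x}$ and eigenvalues $|k|^2$, expanding $\cT g$ in a Fourier series and truncating the high frequencies (using that $g$ is smooth to ensure rapid decay of the coefficients), one obtains, up to arbitrarily small error in any $L^2H^s$ norm on $\Om'$, a trigonometric polynomial that is manifestly of the form $e^{it\De_{\TT^3}}w_0$ with $w_0\in C^\infty(\TT^3)$.

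With this torus approximation theorem in hand, the rest of Step~3 of Theorem~\ref{T.GP} carries over verbatim. Concretely: choose $w_0\in C^\infty(\TT^3)$ so that $w:=e^{it\De_{\TT^3}}w_0$ satisfies $\|w-v\|_{C^k(V)}<\ep/2$; then solve the rescaled equation $i\pd_t\tilde u+\De\tilde u+\de(1-|\tilde u|^2)\tilde u=0$ on $\TT^3$ with initial datum $\tilde u(\cdot,0)=1-w_0$; a standard Duhamel/contraction argument (now on the compact manifold $\TT^3$, which is simpler than on~$\RR^3$ since all relevant Sobolev norms are equivalent to their non-weighted counterparts) yields a global smooth solution with $\|\tilde u-1+w\|_{C^k([-T,T]\times\TT^3)}\le C_T\de$, so that the perturbation statements of Step~2 apply to $\tilde u$. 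Finally, $u(x,t):=\tilde u(\de^{-1/2}x,t/\de)$ with $\eta:=\de^{1/2}$ (interpreting the spatial rescaling via $\La_\eta:B_1\to B_\eta\subset\TT^3$) solves the standard Gross--Pitaevskii equation on $\TT^3$ and realizes the three properties (i)--(iii) of the statement.

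The main obstacle, as expected, is the torus approximation theorem. The potentially delicate point is the unique-continuation-plus-Hahn--Banach argument: one must confirm that $\cT^*\phi$, even though only a distribution a priori, vanishes on a full open spacetime slab of $(\TT^3\setminus\Om'_t)\times\{t\}$ for each relevant~$t$. This is handled exactly as in the proof of Lemma~\ref{L.nonquant} by mollifying $\cT^*\phi$ and applying the standard Schr\"odinger unique continuation result, which is local and insensitive to whether the underlying manifold is $\RR^n$ or $\TT^n$. No quantitative Helmholtz--Yukawa input is needed for a qualitative periodic statement, because on the torus the replacement of $\cT g$ by $e^{it\De_{\TT^3}}w_0$ is achieved by Fourier truncation rather than by the Bessel-function manipulations of Section~\ref{S.Helmholtz}.
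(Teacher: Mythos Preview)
Your strategy diverges from the paper's at the crucial step. The paper does \emph{not} attempt a torus global approximation theorem. Instead it works on~$\RR^3$: by Remark~\ref{R.Schrodinger} there is $v_0\in\cS(\RR^3)$ such that $v=e^{it\De}v_0$ realizes the reconnection pattern in $B_1\times(0,T)$; one then approximates $v$ on this bounded set by a Riemann sum $v_1=J^{-6}\sum_j e^{i\xi_j\cdot x-i|\xi_j|^2 t}\,\hat v_0(\xi_j)$ with all $\xi_j\in\mathbb Q^3$. If $N$ is the common denominator, $w(x,t):=v_1(Nx,N^2t)$ is $2\pi$-periodic in~$x$ and solves the free equation on~$\TT^3$, with $\eta=1/N$. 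The nonlinear step uses the laser-beam route of Theorem~\ref{T.laser} (solve $i\pd_t\tu+\De\tu-\de|\tu|^2\tu=0$ near~$w$ via Bourgain's torus estimates and set $u=\de^{1/2}e^{it}\tu$), which involves no spatial rescaling.

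Your route has a genuine gap in the torus approximation argument. The ``upgrade from $\cT g$ to $e^{it\De_{\TT^3}}w_0$ by Fourier truncation'' does not work as written: $\cT g$ satisfies $(i\pd_t+\De)\cT g=ig$, so its spatial Fourier coefficients obey $i\dot c_k-|k|^2c_k=i\hat g_k$, not $c_k(t)=e^{-i|k|^2t}c_k(0)$; truncating in~$k$ does \emph{not} produce a function of the form $e^{it\De}w_0$. Concretely, with $w_0:=e^{-iT_2\De}\cT g(\cdot,T_2)$ one computes $\cT g-e^{it\De}w_0=-\cT^*g$, a nontrivial free solution on $B_1\times\RR$ precisely on the time-support of~$g$ --- and by admissibility those are exactly the times where~$\Om'$ lives. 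The claim that this step is ``easier than on~$\RR^n$'' is in fact backwards: the Helmholtz--Yukawa route of Section~\ref{S.global} breaks down on~$\TT^3$ because $\De-\tau$ has no nontrivial global solution when $-\tau\notin\spec(-\De_{\TT^3})$, so replacing a local free solution by a global one requires a genuinely new argument on the torus. A second, more minor issue: your final rescaling $u(x,t)=\tu(\de^{-1/2}x,t/\de)$ defines a function on~$\TT^3$ only if $\de^{-1/2}\in\ZZ$. This is fixable by restricting to $\de=1/N^2$, but the paper avoids the issue entirely by performing the rescaling at the linear level (via the integer~$N$ coming from rational frequencies) and using the amplitude transformation $u=\de^{1/2}e^{it}\tu$, which preserves the zero set without any spatial rescaling.
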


\begin{proof}
  By Remark~\ref{R.Schrodinger}, there is a Schwartz initial datum
  $v_0\in\cS(\RR^3)$ such that the solution to the Schrödinger
  equation on~$\RR^3$ $v:=e^{it\De}v_0$ realizes the reconnection
  pattern defined by~$\Si$ up to a small deformation and satisfies the
  properties described in Theorem~\ref{T.laser}. We can assume that
  this reconnection takes place in the bounded spacetime domain
  $B_1\times (0,T)$.
  
  Note that~$v$ can be
  written in terms of the Fourier transform of~$v_0$ as
  \[
v(x,t)=\int_{\RR^3} e^{i\xi\cdot x -i|\xi|^2 t}\, \hv_0(\xi)\, d\xi\,.
  \]
As $\hv_0$ is a Schwartz function, it is standard that, for $(x,t)$ in
the bounded set $B_1\times (0,T)$, the above integral can be
approximated by a Riemann sum of the form
\[
v_1(x,t):= J^{-6}\sum_{j=1}^{J^6} e^{i\xi_j\cdot x -i|\xi_j|^2 t}\, \hv_0(\xi_j)
\]
as
\begin{equation}\label{vv1}
\|v-v_1\|_{C^k(B_1\times(0,T))}<\ep\,,
\end{equation}
where $k$ and $\ep>0$ are fixed but arbitrary and~$J$ is a large
positive integer. One possible way of
choosing the points~$\xi_j$ is by taking a
cube of side $J$ centered at the origin, dividing it into $J^6$
cubes of side $J^{-1}$ and letting $\xi_j$ be any point in the
$j^{\mathrm{th}}$ cube. For a large enough~$J$, it is clear that the
approximation bound~\eqref{vv1} will hold. It is also apparent that
one can pick all the points $\xi_j$ rational, i.e., $\xi_j\in\mathbb
Q^3$. Observe that the approximation estimate and the stability under
small perturbations of reconnection scenarios that we constructed in
Theorem~\ref{T.laser} ensures that $v_1$ features reconnections that
are diffeomorphic to, and a small deformation of, those described by
the pseudo-Seifert surface~$\Si$.

Let $N$ be the height of the point $(\xi_1,\xi_2,\dots, \xi_{J^6})\in
\mathbb Q^{3J^6}$, that is, the least common denominator of its
coordinates in reduced form, and define
\[
w(x,t):= v_1(Nx,N^2 t)\,.
\]
By the way we have picked~$N$ it is clear that $w(x,t)$ defines a
function in $C^\infty\loc(\TT^3\times\RR)$ that satisfies the
Schr\"odinger equation on the 3-torus:
\[
i\pd_t w+\De w=0\,.
\]
The zero set of~$w$ is simply the image of the zero set
of~$v_1|_{(-\pi N,\pi N)^3\times \RR}$ under the
map
\[
\Theta_N(x,t):= (x/N,t/N^2)\,.
\]
In view of the properties of~$v_1= w\circ \Theta_N$, this immediately implies that $w$ features reconnections contained in
the set $B_{1/N}\times (0,T/N^2)$ that are diffeomorphic to the
scenario described by~$\Si$ and satisfy the properties of the
statement. This fact is robust under small perturbations.

It is now easy to promote this solution of the linear Schr\"odinger
equation on~$\TT^3$ to a global smooth solution of the modified
Gross--Pitaevskii equation
\[
i\pd_t\tu+\De \tu-\de|\tu|^2 \tu=0
\]
on~$\TT^3$ that is close to~$w$ using Duhamel's formula and Bourgain's
dispersive estimates on the torus~\cite{Bourgain}. This can be
transformed into a solution to the Gross--Pitaevskii equation using
the formula~\eqref{eqfinal} just as in Theorem~\ref{T.laser}. This
completes the proof of the theorem with $\eta:=1/N$.
\end{proof}

\section*{Acknowledgements}

The authors are indebted to Luis Escauriaza for explanations
concerning three-sphere inequalities for the Schr\"odinger equation,
including the counterexample that we can construct using a Tychonov-type
argument. The authors are supported by the ERC Starting Grants~633152
(A.E.) and 335079 (D.P.-S.) and by the grant MTM-2016-76702-P of the Spanish Ministry of Science
(D.P.-S.). This work is supported in part by the
ICMAT--Severo Ochoa grant
SEV-2015-0554.

\appendix


\section{Uniform lower bounds for the integral of a Bessel function}
\label{A.Bessel}

Notice that the function $\cI_\nu(\al)$, introduced in~\eqref{defcI},
can be written as
\[
  \cI_\nu(\al):= \int_0^{R''} r |I_\nu(\al r)|^2\, dr =
  \frac1{|\al|^2}\int_0^{|\al| {R''}} \rho |Z_\nu(\rho)|^2\, d\rho\,,
\]
where we define $Z_\nu(z):= J_\nu(z)$ if $\al\in i\RR^+$ and
$Z_\nu(z):= I_\nu(z)$ if $\al\in \RR^+$. Hence a first observation that is useful when computing lower bounds for
the function~$\cI_\nu(\al)$ is that $|\al|^2 \cI_\nu(\al)$ is an
increasing function of~$|\al|$.

We will write
$F\approx G$ if there is a positive constant (which does not depend on
$\nu$ or~$\al$), such that
\[
\frac F C\leq G\leq CF\,.
\]
Likewise, $F\lesssim G$ means that $F\leq CG$ with $C$ as above,
and $F\ll G$ means that $F\leq \de G$ for a certain small constant~$\de$,
again independent of~$\nu$ or~$\al$.

\begin{lemma}\label{L.Bessel}
With $\al\in\RR^+\cup i\RR^+$ and $\nu\geq\frac12$, the function
$\cI_\nu(\al)$ satisfies the lower bound
\[
\cI_\nu(\al) \gtrsim \frac1{\langle|\al|\rangle^2 \nu^2}\Big(\frac
{C\min\{|\al|,1\}}\nu\Big)^{2\nu} e^{C \Real\al} \,.
\]
\end{lemma}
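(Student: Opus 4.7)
The plan is to combine two lower bounds: a power-series bound furnishing the polynomial factor $(C\min\{|\al|,1\}/\nu)^{2\nu}$, valid uniformly in $\al$, and an exponential-asymptotic bound furnishing the factor $e^{C\Real\al}$, which is relevant only when $\al$ is real and $\al R'' \gg \nu$. After the substitution $\rho = |\al|r$ one has
\[
|\al|^2\cI_\nu(\al) = \int_0^{|\al|R''}\rho\,|Z_\nu(\rho)|^2\, d\rho,
\]
with $Z_\nu = J_\nu$ or $I_\nu$ according as $\al\in i\RR^+$ or $\al\in\RR^+$, and this quantity is nondecreasing in $|\al|$, which simplifies the case analysis.

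First I would establish the polynomial bound. Writing the standard Taylor expansion
\[
Z_\nu(\rho) = \frac{(\rho/2)^\nu}{\Gamma(\nu+1)}\bigg(1 + \sum_{k\geq 1}\frac{(\pm 1)^k (\rho/2)^{2k}}{k!\,(\nu+1)_k}\bigg),
\]
the absolute value of the series in brackets is bounded below by a positive absolute constant whenever $0 < \rho \leq 1$ and $\nu \geq 1/2$, uniformly in the choice of sign (that is, whether $Z_\nu = J_\nu$ or $I_\nu$), since the tail is majorized by $\sum_{k\geq 1}(1/4)^k/k! < 1$. Plugging this pointwise bound into the integral above over $\rho \in [0,\min\{|\al|R'',1\}]$ and invoking Stirling's formula $\Gamma(\nu+1)^2 \asymp \nu(\nu/e)^{2\nu}$, a short computation yields
\[
\cI_\nu(\al) \gtrsim \frac{1}{\langle|\al|\rangle^2\,\nu^2}\bigg(\frac{C\min\{|\al|,1\}}{\nu}\bigg)^{2\nu}
\]
after relating $\min\{|\al|R'',1\}$ and $\min\{|\al|,1\}$ up to a factor depending on $R''$. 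This already proves the full claim when $\al\in i\RR^+$, and more generally whenever $\al R''$ is at most a fixed multiple of $\nu$: in that regime $e^{C\Real\al} \leq e^{C\al}$ can be absorbed into the implicit constant because the $(C'/\nu)^{2\nu}$ factor produced by the power-series estimate (with some constant $C'$) dominates $e^{C\al}(C/\nu)^{2\nu}$ whenever $C$ is chosen sufficiently small relative to $C'$.

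For $\al\in\RR^+$ with $\al R'' \gg \nu$ I would supply the exponential factor using the classical asymptotic $I_\nu(\rho) = (1+o(1))\,e^\rho/\sqrt{2\pi\rho}$ as $\rho/\nu\to\infty$, which gives $I_\nu(\rho) \geq c\,e^\rho/\sqrt{\rho}$ on an interval $[c_1\max\{\nu,1\},\al R'']$ for some $c_1$ depending only on the implicit constants. Restricting the integral to this range produces $|\al|^2\cI_\nu(\al) \gtrsim e^{2\al R''}$, so $\cI_\nu(\al) \gtrsim e^{2\al R''}/\al^2$. Choosing the constant $C$ in the statement strictly smaller than $2R''$, the ratio of this exponential bound to the target $e^{C\al}(C/\nu)^{2\nu}/(\al^2\nu^2)$ equals $e^{(2R''-C)\al}\nu^2(\nu/C)^{2\nu}$, which is $\gtrsim 1$ throughout the regime $\al R'' \gtrsim \nu$ once one uses $\nu\geq 1/2$ to handle small $\nu$. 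Taking the maximum of the polynomial and exponential bounds then completes the proof.

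The one genuine balancing act is the choice of the constant $C$: it must simultaneously be small enough that in the polynomial regime the factor $e^{C\Real\al}$ is absorbed by the power-series constant $C'$ (so that the threshold $c_1\nu$ where the exponential bound takes over is no larger than the threshold where the polynomial bound degrades), and strictly smaller than $2R''$ to make the exponential bound dominant in the large-$\al$ regime. A single $C$ meeting both constraints exists and depends only on $R''$, so no estimate beyond Stirling's formula and the classical uniform asymptotics of Bessel functions is required.
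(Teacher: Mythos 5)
Your power-series argument for the ``polynomial'' part of the bound is correct and actually unifies what the paper treats in three separate cases (small argument, large order, and the monotonicity trick for Case~4), which is a genuine simplification. The problem is the exponential part. The asymptotic you invoke, $I_\nu(\rho)=(1+o(1))\,e^\rho/\sqrt{2\pi\rho}$, is \emph{not} valid uniformly as $\rho/\nu\to\infty$; it requires $\rho\gg\nu^2$. Indeed the uniform Debye expansion gives $I_\nu(\nu z)\sim e^{\nu\eta(z)}/\big[(2\pi\nu)^{1/2}(1+z^2)^{1/4}\big]$ with $\eta(z)=\sqrt{1+z^2}-\log\frac{1+\sqrt{1+z^2}}{z}$, and the ratio of this to $e^{\nu z}/\sqrt{2\pi\nu z}$ is $\approx e^{-\nu(z-\eta(z))}=e^{-\nu^2/(2\rho)+O(\nu^4/\rho^3)}$. (Concretely, $I_{10}(10)\approx 22$ while $e^{10}/\sqrt{20\pi}\approx 2800$.) So your claimed lower bound $I_\nu(\rho)\geq c\,e^\rho/\sqrt{\rho}$ on $[c_1\max\{\nu,1\},\alpha R'']$ is false, and with the correct threshold $c_1\max\{\nu^2,1\}$ a gap opens: the polynomial bound absorbs $e^{C\alpha}$ only when $2\nu\log(C'/C)\geq C\alpha$, i.e.\ $\alpha\lesssim\nu$, while the large-argument asymptotic only applies once $\alpha R''\gtrsim\nu^2$. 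The regime $\nu\ll\alpha R''\ll\nu^2$ is not covered.

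That intermediate range can be handled, but you need the Debye asymptotic itself rather than the fixed-order one. Restrict the integral to $\rho\in[\alpha R''-1,\alpha R'']$: for $z=\alpha R''/\nu\geq 1$ one then finds $|\alpha|^2\cI_\nu(\alpha)\gtrsim e^{2\nu\eta(z)}$, and since $\eta'(z)=\sqrt{1+z^2}/z$ shows $\eta(z)/z$ is increasing with $\eta(1)>0$, one has $\eta(z)\geq\eta(1)\,z$, which produces the factor $e^{C\alpha}$ for any $C\leq 2R''\eta(1)$. You should also know that the paper's own proof has the same lacuna: its Case~4 ($|\alpha|\gg1$, $\nu\gg1$) only records the $\alpha$-independent lower bound $C^{2\nu}/(|\alpha|^2\nu^{2\nu+2})$ obtained by monotonicity, omitting the claimed factor $e^{C\Real\alpha}$. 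This happens to be harmless downstream, since the factor $e^{-C\sqrt{\tau_+}}$ it would contribute to the bound on $A_{lm}$ is dwarfed by the $l_0^{l_0}$ term; but a proof of the lemma \emph{as stated} does require the uniform large-order asymptotics for $I_\nu$ in the intermediate regime.
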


\begin{proof}
  We need to analyze the different cases separately:\smallskip

  \step{Case 1: $|\al|\lesssim 1$ and $\nu\lesssim1$} The asymptotics
  for Bessel functions near zero~\cite[8.440 and 8.445]{GR}, $I_\nu
  (z)= C z^\nu +O(z^{\nu+1})$, together
  with the fact that $I_\nu(z)$ is obviously of order~1 if $\nu$
  and~$|z|$ are, immediately yield that
  \[
\cI_\nu(\al)\approx |\al|^{2\nu}
  \]
in this region of the parameter space.

\step{Case 2: $|\al|\gg 1$ and $\nu\lesssim1$} The usual large time
asymptotics for Bessel functions of fixed order~\cite[8.451.1 and 8.451.5]{GR},
\[
  I_\nu(z)= \begin{cases}
    C z^{-1/2}{e^z}[1+ O(z^{-1})] & \text{if } z\in\RR^+\,,\\
    C |z|^{-1/2}{\cos(|z|-c_\nu)}+ O(|z|^{-3/2}) & \text{if } z\in i\RR^+\,,
  \end{cases}
\]
ensure that
\[
\cI_\nu(\al)\approx \int_0^{R''} \frac{e^{2\al r}}{\al}\, dr\approx
\frac{e^{2{R''} \al}}{\al^2}
\]
if $\al\in\RR^+$, while for $\al\in i\RR^+$ 
\[
\cI_\nu(\al)= \frac1{|\al|^2}\int_0^{ |\al| {R''}} \rho\, |I_\nu(\rho)|^2\,
d\rho\approx \frac1{|\al|^2}\int_0^{|\al| {R''}} {\cos^2(\rho - \rho_0)}\,
d\rho \approx \frac 1{|\al|}\,.
\]

\step{Case 3: $|\al|\lesssim1$ and $\nu\gg1$} The asymptotic
expansions for Bessel functions of large order~\cite[8.452.1]{GR},
\[
I_\nu(z) =C_1 \nu^{-\frac 12}\Big(\frac{C_2z}\nu\Big)^{\nu}[1+ O(\nu^{-1})]\,,
\]
ensure
that
\[
\cI_\nu(\al)\approx \frac1\nu\int_0^{R''} r\Big|\frac{C\al
  r}\nu\Big|^{2\nu}\, dr\approx \frac{(C |\al|)^{2\nu}}{\nu^{2\nu+2}}\,.
\]

  \step{Case 4: $|\al|\gg1$ and $\nu\gg 1$} Since $|\al|^2\,
  \cI_\nu(\al)$ is an increasing function of~$|\al|$, from Case~3 we
  immediately get that
  \begin{equation}\label{lowbound}
\cI_\nu(\al)\gtrsim \frac{ C^{2\nu} }{|\al|^2 \nu^{2\nu+2}}\,.
\end{equation}

Putting all the cases together we arrive at the bounds in the statement.
\end{proof}

\bibliographystyle{amsplain}

\end{document}